\documentclass[12pt,centertags,oneside]{amsart}
\usepackage{amsmath,amstext,amsthm,amscd,typearea,hyperref}
\usepackage{amssymb}
\usepackage{a4wide}
\usepackage[mathscr]{eucal}
\usepackage{mathrsfs}
\usepackage{typearea}
\usepackage{charter}
\usepackage{pdfsync}
\usepackage{stmaryrd}
 \usepackage[T1]{fontenc}
\usepackage{enumitem}

\usepackage{calc}

\usepackage{changepage}

\usepackage[a4paper,width=16.5cm,top=3cm,bottom=3cm]{geometry}

\numberwithin{equation}{section}

\usepackage[french,english]{babel}

\allowdisplaybreaks
\emergencystretch=\maxdimen
\usepackage{multicol}

\usepackage{xcolor}

\newtheorem{theorem}{Theorem}[section]

\newtheorem{proposition}[theorem]{Proposition}
\newtheorem{corollary}[theorem]{Corollary}
\newtheorem{lemma}[theorem]{Lemma}
\newtheorem{remark}[theorem]{Remark}

\newtheorem*{definition*}{Definition}

\newcommand{\cali}[1]{\mathscr{#1}}

\newcommand{\supp}{{\rm supp}}

\newcommand{\dist}{\mathop{\mathrm{dist}}\nolimits}

\newcommand{\ddc}{{\rm dd^c}}
\newcommand{\dc}{{\rm d^c}}

\newcommand{\dd}{{\rm d}}

\newcommand{\dbar}{\overline\partial}

\newcommand{\SH}{{\rm SH}}

\newcommand{\vep}{\varepsilon}

\newcommand{\vol}{{\rm vol}}

\newcommand{\Area}{{\rm Area}}

\newcommand{\Cc}{\cali{C}}

\newcommand{\Fc}{\cali{F}}

\newcommand{\Oc}{\cali{O}}

\newcommand{\W}{\cali{W}}

\newcommand{\cM}{\mathcal{M}}

\newcommand{\B}{\mathbb{B}}

\newcommand{\C}{\mathbb{C}}

\newcommand{\N}{\mathbb{N}}
\newcommand{\Z}{\mathbb{Z}}
\newcommand{\R}{\mathbb{R}}

\newcommand{\lp}{\langle}
\newcommand{\rp}{\rangle}

\newcommand{\norm}[1]{\lVert#1\rVert}

\newcommand{\oA}{\mathcal{A}}
\newcommand{\oB}{\mathcal{B}}

\newcommand{\oR}{\mathcal{R}}
\newcommand{\oE}{\mathcal{E}}

\newcommand{\oL}{\mathcal{L}}
\newcommand{\diam}{{\rm diam}}
\newcommand{\oI}{\mathcal{I}}
\newcommand{\oJ}{\mathcal{J}}

\newcommand{\oD}{\mathcal{D}}

\newcommand{\oK}{\mathcal{K}}

\newcommand{\fh}{\mathfrak{h}}



\usepackage{fancyhdr}
\title{Growth Rate of balls of holomorphic sections\\ on compact Riemann surfaces}

\author{Hao Wu}
\address{School of Mathematics,  Nanjing University - Nanjing - China 210093}
\email{haowu@nju.edu.cn}

\date{}
\thanks{}

\begin{document}

\begin{abstract}
Let $X$ be a compact Riemann surface and let $L$ be a positive line bundle on $X$. We obtain the growth speed of unit ball volume in $H^0(X,L^n)$ towards the energy at equilibrium. As an application, we also obtain the speed of Fekete measures converging to the equilibrium measure.
\end{abstract}

\clearpage\maketitle
\thispagestyle{empty}

\noindent\textbf{Mathematics Subject Classification 2020:}  31A15, 32L05, 32W20, 58J52. 

\medskip

\noindent\textbf{Keywords:} holomorphic line bundle, Fekete measure, Abel-Jacobi map, Green function, bosonization formula.

\setcounter{tocdepth}{1}
\tableofcontents\normalfont

\section{Introduction}

Let $X$ be a compact Riemann surface and let $L$ be a positive holomorphic line bundle on $X$ with $\deg(L) \geq 1$.  Fix a Hermitian metric $h$ on $L$ such that the curvature form $c_1(L,h)$ is strictly positive. The normalized $(1,1)$-form
$$ \omega :=c_1(L,h)/\deg(L)$$
is a smooth probability measure on $X$.

We call \textit{weighted subset of $X$} a data $(K,\phi,\mu)$, where $K$ is a compact subset of $X$, $\phi$ is a continuous function on $X$ and $\mu$ is a probability measure  with $\supp(\mu)\subset  K$.

For each $n\in\N$, consider the space of global holomorphic sections $H^0 (X,L^n)$ of $L^n$.  Its dimension  is
\[
N_n:=\dim_{\mathbb{C}} H^0(X, L^n) = n \cdot \deg(L) - g + 1
\] 
by Riemann-Roch theorem, where  $g$ is the genus of $X$.

For each element $s\in  H^0(X, L^n)$, the norm at each point $x\in X$, induced by $h$, is denoted by $|s(x)|_h$.
The weighted subset $(K,\phi,\mu)$ gives  a new metric
$$ |s(x)|_{n\phi}:=|s(x)|_h e^{-n\phi}.$$
After that, we   define two norms on $H^0 (X,L^n)$:
\begin{equation*}
\norm{s}_{L^\infty (K,n\phi)}:=\sup _{x\in K} |s(x)|_{n\phi}
\end{equation*}
and
\begin{equation*}\label{defn-l2metric}
 \norm{s}_{L^2 (\mu,n\phi)}^2:=  \int_X |s(x)|_{n\phi}^2 \,\dd \mu(x)= \int_K |s(x)|_{n\phi}^2 \,\dd \mu(x). 
\end{equation*}

Let $\oB^\infty_n(K,\phi)$ (resp.\ $\oB^2_n(\mu,\phi)$) denote the unit ball in $H^0 (X,L^n)$ with respect the $L^\infty (K,n\phi)$-norm (resp.\ $L^2(\mu,n\phi)$-norm). Define 
$$   \oL_n(K,\phi):={1\over n N_n} \log \vol \oB^\infty_n (K,\phi)$$
and
$$   \oL_n(\mu,\phi):={1\over n N_n} \log \vol \oB^2_n (\mu,\phi).$$
 Observe that $\oL_n(K,\phi)\leq \oL_n(\mu,\phi)$.  
Here, $\vol$ denotes the Lebesgue measure on the vector space $H^0 (X,L^n)$, which is only defined up to a multiplicative constant.  Nevertheless, the differences $\oL_n(K_1,\phi_1)-\oL_n(K_2,\phi_2)$ and   $\oL_n(\mu_1,\phi_1)-\oL_n(\mu_2,\phi_2)$ are well-defined for any two weighted compact subsets $(K_1,\phi_1,\mu_1)$ and $(K_2,\phi_2,\mu_2)$.

\medskip

A function $\varphi$ on $X$ with values in $\mathbb{R} \cup \{-\infty\}$ is called \textit{quasi-subharmonic} if, locally, it can be written as the difference of a subharmonic function and a smooth function. If $\varphi$ is quasi-subharmonic, then there exists a constant $c \geq 0$ such that $\ddc \varphi \geq -c  \omega $ in the sense of currents ($\dc:=\frac{i}{2\pi}(\dbar-\partial)$ and $\ddc=\frac{i}{\pi}\partial\dbar$). When $c = 1$, $\varphi$ is called an  \textit{$\omega $-subharmonic function}, and $\ddc \varphi + \omega $ is a probability measure on $X$ by Stokes' formula. The set of $\omega $-subharmonic functions is denoted by $\SH(X,\omega )$.

For any probability measure $\nu$ on $X$, we can write $\nu = \omega  + \ddc U_\nu$, where $U_\nu$ is the unique $\omega $-subharmonic function such that $\max_X U_\nu = 0$. We call $U_\nu$ the \textit{maximum normalized $\omega $-potential} of $\nu$.
There is an alternative way to normalize the potential $U^\star_\nu$ by requiring that $\int_X U^\star_\nu \, \omega  = 0$. We call $U^\star_\nu$ the \textit{mean normalized  $\omega$-potential}   of $\nu$. By definition, $U_\nu=U^\star_\nu-\max_X U^\star_\nu$.

For every probability measure $\nu$ on $X$,   define the \textit{energy functional}
$$   \oI_\phi(\nu):=-\int_X  U_\nu^\star \,\dd \nu + 2\int_X \phi \,\dd \nu.    $$
Denote by $\cM(K)$ the set of all probability measures on $K$.  The functional $\oI_\phi$ over $\cM(K)$ admits a unique minimizer, denoted by $\nu_{K,\phi}$ (see Section \ref{sec-min}).

\medskip

If $K$ is non-pluripolar,
Berman-Boucksom \cite{ber-bou-invent} proved that  as $n\to \infty$,
$$\oL_n(K_1,\phi_1)-\oL_n(K_2,\phi_2)  \longrightarrow  \min_{\cM(K_1)}\oI_{\phi_1}  -\min_{\cM(K_2)}\oI_{\phi_2}.$$
If moreover both $(K_1,\phi_1,\mu_1)$ and $(K_2,\phi_2,\mu_2)$ both satisfy the Bernstein-Markov condition (c.f.\ Section \ref{sec-BM}), then
$$\oL_n(\mu_1,\phi_1)-\oL_n(\mu_2,\phi_2)  \longrightarrow  \min_{\cM(K_1)}\oI_{\phi_1}  -\min_{\cM(K_2)}\oI_{\phi_2}.   $$
 
Even under the hypothesis $K_1=K_2=X,\phi_1,\phi_2\in\Cc^3(X)$, the speed of the above convergence was unknown until the work of Dinh-Ma-Nguy\^{e}n \cite{din-ma-ngu-ens}, where a rate  $O(n^{-1/2}\log^{3/2}n)$ was obtained.
Our main results of this article  provide the speed $O(n^{-1}\log n)$ when $\phi_1,\phi_2$ are only assumed to be H\"older continuous. Moreover, it seems to be nearby optimal.

\begin{theorem} \label{maintheorem1}
	Let $(K_1,\phi_1,\mu_1)$ and $(K_2,\phi_2,\mu_2)$ be  weighted  subsets of $X$. Suppose $K_1,K_2$ are the closure of non-empty open subsets with $\Cc^2$ boundary and $\phi_1,\phi_2$ are $\gamma$-H\"older for some $0<\gamma\leq 1$. Then there exists a constant $C>0$ independent of $\phi_1,\phi_2$ and $n$, but depending on $K_1,K_2,\norm{\phi_1}_{\Cc^\gamma},\norm{\phi_2}_{\Cc^\gamma}$, such that for every $n>1$, 
	$$  \Big|\oL_n(K_1,\phi_1)-\oL_n(K_2,\phi_2) -  \min_{\cM(K_1)}\oI_{\phi_1}  +\min_{\cM(K_2)}\oI_{\phi_2}  \Big|\leq  C{\log n\over n}.         $$
\end{theorem}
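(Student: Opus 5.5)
Since the statement only concerns differences, I will compare each weighted subset $(K,\phi)$ with a fixed reference, say $(X,0)$. I will then prove that for each $(K,\phi)$ there is a constant $c_n$, independent of $(K,\phi)$, with
$$\Big|\oL_n(K,\phi)-c_n-\min_{\cM(K)}\oI_\phi\Big|\le C\,\frac{\log n}{n}.$$
The basic tool is the classical determinantal description of the ball volume. For a basis $s_1,\dots,s_{N_n}$ of $H^0(X,L^n)$, the volume $\vol\oB^\infty_n(K,\phi)$ is comparable to the maximum over $K^{N_n}$ of the weighted Vandermonde-type quantity
$$|\det(s_i(x_j))|_{n\phi}^2.$$
More precisely, $\vol\oB^\infty_n$ is sandwiched between $\vol\oB^2_n(\mu,\phi)$ for suitable measures and quantities controlled by Fekete points, up to factors $e^{O(N_n\log N_n)}$. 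After dividing by $nN_n$, these factors cost only $O(\log n/n)$.

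The reduction to $\frac1{nN_n}\log\sup_{K^{N_n}}|\det(s_i(x_j))|_{n\phi}^2$ goes as follows.
\begin{itemize}
\item The upper bound compares the unit ball with the $L^2(\mu,\phi)$ ball for $\mu$ the normalized area measure on $K$.
\item It then uses a Bernstein--Markov inequality with polynomial constant $\norm{s}_{L^\infty(K,n\phi)}\le Cn^{A}\norm{s}_{L^2(\mu,n\phi)}$. This holds because $K$ is the closure of an open set with $\Cc^2$ boundary and $\phi$ is Hölder. I would prove it via a local sub-mean-value estimate on discs of radius $n^{-1}$, using Hölder continuity of $\phi$ to control the weight oscillation.
\item The lower bound uses Fekete points and Lagrange-type interpolation. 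The Lebesgue constants are bounded by $N_n$.
\end{itemize}

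Next I evaluate the determinant via the bosonization formula (Arakelov / Faltings). On a compact Riemann surface, $\log|\det(s_i(x_j))|^2_{h}$ equals
$$\sum_{i<j}\log G(x_i,x_j)$$
plus a term involving the Abel--Jacobi image of the divisor $\sum x_j$ (a theta function evaluated at that point) plus a constant depending only on $n$. Here $G$ is the Arakelov Green function with $\log G = -U^\star_{\delta_y}(x)$ up to normalization. The theta term is bounded above uniformly, so the upper bound becomes a discrete energy estimate. The negative part of the discrete energy is
$$-\frac{1}{N_n^2}\sum_{i\ne j}\log G(x_i,x_j)+\frac2{N_n}\sum\phi(x_j),$$
and it is bounded below by $\oI_\phi(\nu_{K,\phi})-O(\log n/n)$. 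To see this, smooth each Dirac mass over a disc of radius $n^{-1}$; this changes the self-energy by $O(\log n)$ per point and the $\phi$-term by $O(n^{-\gamma})$. Since $N_n\approx n\deg L$, this gives the matching rate.

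For the lower bound on the determinant I must exhibit configurations in $K^{N_n}$ whose discrete energy is within $O(\log n/n)$ of the minimum. Here the theta term is genuinely dangerous: it can vanish, the theta divisor. The plan is as follows.
\begin{itemize}
\item Discretize $\nu_{K,\phi}$ into $N_n$ cells of equal mass and choose points with mutual distances $\gtrsim n^{-1/2}$. This needs regularity of the equilibrium measure: it has bounded density on $K$ plus, possibly, a boundary part on $\partial K$ with bounded density in arclength, which is where the $\Cc^2$ boundary and Hölder $\phi$ enter.
\item Use the freedom of perturbing the points within their cells by $n^{-2}$ to avoid the theta divisor quantitatively. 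Averaging $\log|\theta|$ over the perturbation, its integrability under Abel--Jacobi pushforward gives a loss of at most $O(\log n)$, which is negligible after dividing by $nN_n$.
\end{itemize}

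I expect this lower bound to be the main obstacle. Two points must be handled together: controlling the Riemann-sum error of $\iint\log G\,\dd\nu\,\dd\nu$ at rate $\log n/n$ rather than $n^{-1/2}$, and making the theta-function perturbation argument uniform.
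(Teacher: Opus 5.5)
The overall architecture can work, and in one respect it is cleaner than the paper's. The Fekete/Lagrange sandwich $\oB^\infty_n(K,\phi)\subset\{s:\max_j|s(x^*_j)|_{n\phi}\le 1\}\subset N_n\,\oB^\infty_n(K,\phi)$ alone gives $\log\vol\oB^\infty_n(K,\phi)=-2\log\norm{\det S_n}_{L^\infty(K,n\phi)}+O(N_n\log N_n)$. The bosonization constant then cancels against the reference $(X,\mathbf 0)$, so you need neither Bernstein--Markov nor Proposition~\ref{prop-Zn}.

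\textbf{The genuine gap.} It is the step you yourself leave open: producing $\mathbf x\in K^{N_n}$ with $\oJ_{N_n,\phi}(\mathbf x)\le\min_{\cM(K)}\oI_\phi+O(\log n/n)$ and a non-degenerate theta factor. The regularity your discretization relies on is false under the hypotheses. For H\"older $\phi$ one only knows that $U_{K,\phi}$ is $\gamma$-H\"older, hence $\nu_{K,\phi}(\B(x,r))\lesssim r^\gamma$, and nothing more.
\begin{itemize}
\item Take $K=X$ and $\phi=U^\star_\nu$ with $\nu$ the normalized arclength of a smooth closed curve. Then $\phi$ is Lipschitz and $\omega$-subharmonic, and $\nu_{X,\phi}=\nu$.
\item A Cantor-type $\nu$ with H\"older potential is worse still.
\end{itemize}
In such cases $N_n$ points of $\supp\nu_{K,\phi}$ with mutual distances $\gtrsim n^{-1/2}$ need not exist. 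Points placed off the support pay in the $\phi$-term, and no deterministic Riemann-sum estimate at rate $\log n/n$ is in sight.

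\textbf{How the paper closes it.} It uses no regularity of $\nu_{K,\phi}$.
\begin{itemize}
\item It minimizes the auxiliary functional $\oK_{m,\phi}=-\frac{m}{m-1}\oE_m+2\int\phi\,\dd\delta_{\mathbf p}$.
\item It sums the optimality inequalities of Lemma~\ref{lem-fekete} and integrates them against $\nu_{K,\phi}$.
\item It concludes using the separation of Lemma~\ref{sep-fm}, smoothing at scale $m^{-2/\gamma}$, and positivity of the energy of the signed measure $\nu_{K,\phi}-\sigma_m$ (Proposition~\ref{prop-lower}).
\end{itemize}

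\textbf{A cheaper fix in your $\sup$-formulation.} The classical averaging argument also works. For $x_1,\dots,x_m$ i.i.d.\ with law $\nu_{K,\phi}$ one has $\mathbb{E}\,\oJ_{m,\phi}=\oI_\phi(\nu_{K,\phi})+\frac1m\int U^\star_{\nu_{K,\phi}}\,\dd\nu_{K,\phi}\le\min_{\cM(K)}\oI_\phi$, since $\int U^\star_\nu\,\dd\nu\le 0$. Sample only $m-g$ points this way and add $g$ points chosen as in Lemma~\ref{lem-pet-fekete}. This costs $O(\log m/m)$, because the cross Green terms are $O(\log m)$ by the $\alpha/\sqrt m$ separation, and it makes the theta factor at least $\kappa$.

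\textbf{Scales that ignore the H\"older exponent.} Several of your scales must be adjusted.
\begin{itemize}
\item Smoothing each Dirac mass at radius $n^{-1}$ moves the $\phi$-term by $O(n^{-\gamma})$, which is not $O(\log n/n)$ when $\gamma<1$. Use radius $n^{-1/\gamma}$; the self-energy cost is still $O(\log n)$ per point.
\item The smoothed measure lives on the $n^{-1/\gamma}$-neighbourhood of $K$. You must therefore compare the minimum of $\oI_\phi$ there with $\min_{\cM(K)}\oI_\phi$, as in Lemma~\ref{lem-Km}, or first pass to $(X,U_{K,\phi})$ via Lemma~\ref{lem-K-X}.
\item Perturbing all points by $n^{-2}$ shifts the $\phi$-term by up to $n^{-2\gamma}$, which is too much for $\gamma<1/2$. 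The paper uses $m^{-2/\gamma}$.
\item A sub-mean-value Bernstein--Markov bound on discs of radius $n^{-1}$ gives only a factor $e^{O(n^{1-\gamma})}$ and says nothing at points of $\partial K$. This is harmless only because the Fekete/Lagrange argument makes that step unnecessary.
\item In the bosonization formula the sum runs over $i\neq j$, not $i<j$. Also, $\log G(x,y)$ equals $+U^\star_{\delta_y}(x)$ up to normalization, not $-U^\star_{\delta_y}(x)$. This is consistent with the energy expression you write afterwards.
\end{itemize}
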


If moreover, the two weighted subsets satisfy  the strong Bernstein-Markov condition (c.f.\ Section \ref{sec-BM}), then we also have the convergence speed for volume of the unit ball in $H^0(X,L^n)$ with respect to  $L^2(\mu,n\phi)$-norm.

\begin{theorem} \label{maintheorem2}
	Let $(K_1,\phi_1,\mu_1)$ and $(K_2,\phi_2,\mu_2)$ be  weighted  subsets of $X$. Suppose $K_1,K_2$ are the closure of non-empty open subsets with $\Cc^2$ boundary and $\phi_1,\phi_2$ are $\gamma$-H\"older  for some $0<\gamma\leq 1$.  If both $(K_1,\phi_1,\mu_1)$ and $(K_2,\phi_2,\mu_2)$  satisfy the strong Bernstein-Markov condition, then there exists a constant $C>0$ independent of $\phi_1,\phi_2$ and $n$, but depending on $K_1,K_2,\norm{\phi_1}_{\Cc^\gamma},\norm{\phi_2}_{\Cc^\gamma}$, such that for every $n>1$, 
	$$  \Big|\oL_n(\mu_1,\phi_1)-\oL_n(\mu_2,\phi_2) -  \min_{\cM(K_1)}\oI_{\phi_1}  +\min_{\cM(K_2)}\oI_{\phi_2} \Big|\leq  C{\log n\over n} .        $$
\end{theorem}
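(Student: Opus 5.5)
The plan is to deduce Theorem \ref{maintheorem2} from Theorem \ref{maintheorem1} by comparing the $L^2(\mu,n\phi)$-ball with the $L^\infty(K,n\phi)$-ball for the same weighted subset. It suffices to prove, for a single weighted subset $(K,\phi,\mu)$ satisfying the hypotheses,
$$0\le \oL_n(\mu,\phi)-\oL_n(K,\phi)\le C\,\frac{\log n}{n}.$$
The estimate for the differences then follows by the triangle inequality together with Theorem \ref{maintheorem1}.

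The lower bound is immediate. Since $\mu$ is a probability measure supported on $K$, we have $\norm{s}_{L^2(\mu,n\phi)}\le \norm{s}_{L^\infty(K,n\phi)}$. Hence $\oB^\infty_n(K,\phi)\subset \oB^2_n(\mu,\phi)$, which gives $\oL_n(K,\phi)\le\oL_n(\mu,\phi)$, as already observed in the introduction.

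For the upper bound I would use the strong Bernstein--Markov condition of Section \ref{sec-BM}. I expect it to be a polynomial version of the Bernstein--Markov inequality: there are constants $C_0,A>0$ such that
$$\norm{s}_{L^\infty(K,n\phi)}\le C_0 n^{A}\norm{s}_{L^2(\mu,n\phi)}\qquad\text{for all } s\in H^0(X,L^n).$$
This inequality gives $\oB^2_n(\mu,\phi)\subset C_0n^A\,\oB^\infty_n(K,\phi)$. Since $H^0(X,L^n)$ has real dimension $2N_n$, taking volumes yields
$$\log\vol\oB^2_n(\mu,\phi)\le 2N_n\log(C_0n^A)+\log\vol\oB^\infty_n(K,\phi).$$
Dividing by $nN_n$ gives $\oL_n(\mu,\phi)-\oL_n(K,\phi)\le 2(\log C_0+A\log n)/n$, which is $O(\log n/n)$ for $n>1$.

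The only step that needs care is matching the precise form of the strong Bernstein--Markov condition in Section \ref{sec-BM} to this polynomial bound. If the condition is stated with a subexponential factor $e^{\epsilon n}$, it would not be enough. If it is stated in terms of quantities such as $\mu(B(x,r))\ge c\,r^{\beta}$ for $x\in K$, then I would derive the polynomial bound as follows. Take $x_0\in K$ maximizing $|s|_{n\phi}$. By Hölder continuity of $\phi$ and a Bernstein-type or Cauchy estimate on $|s|_h$, the function $|s|_{n\phi}$ is at least $\tfrac12\norm{s}_{L^\infty(K,n\phi)}$ on a ball around $x_0$ of radius $r\sim n^{-1/\gamma}$. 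Integrating over this ball gives
$$\norm{s}^2_{L^2(\mu,n\phi)}\ge \tfrac14\,\mu(B(x_0,r))\,\norm{s}^2_{L^\infty(K,n\phi)}\gtrsim n^{-\beta/\gamma}\norm{s}^2_{L^\infty(K,n\phi)},$$
which is again a polynomial loss and produces the same $O(\log n/n)$ conclusion.
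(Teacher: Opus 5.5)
Your proposal is correct and follows the paper's route: the paper combines Theorem \ref{maintheorem1} with the comparison $0\le\oL_n(\mu_i,\phi_i)-\oL_n(K_i,\phi_i)\le Cn^{-1}\log n$ of Lemma \ref{lem-dmn} (quoted from Dinh--Ma--Nguy\^{e}n) via the triangle inequality, and the only difference is that you reprove this comparison directly through the ball inclusion $\oB^2_n(\mu,\phi)\subset C_0n^A\,\oB^\infty_n(K,\phi)$. Your hedging is unnecessary, because Section \ref{sec-BM} defines the strong Bernstein--Markov condition as $\sup_K\rho_n(\mu,\phi)\le Cn^C$, which is exactly the polynomial sup-versus-$L^2$ bound you need, so the fallback paragraph can be dropped.
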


Lastly, we state a condition on $\mu_1,\mu_2$ that is easy to verify and under which the convergence in last theorem holds. We say that $\mu$ satisfies the \textit{mass-density} condition on $K$, if there are two positive constants $c$ and $\tau$ such that for any $x\in K,0<r<1$, 
$$ \mu\big((\B(x,r)\big)\geq cr^\tau.   $$ 
Obviously, every strictly positive $(1,1)$-form satisfies the mass-density condition on any subset. We say the weighted subset $(K,\phi,\mu)$ satisfies the \textit{mass-density} condition if $\mu$ satisfies the mass-density condition on $K$.

\begin{theorem} \label{maintheorem3}
	Let $(K_1,\phi_1,\mu_1)$ and $(K_2,\phi_2,\mu_2)$ be  weighted  subsets of $X$. Suppose $K_1,K_2$ are the closure of non-empty open subsets with $\Cc^2$ boundary and $\phi_1,\phi_2$ are $\gamma$-H\"older  for some $0<\gamma\leq 1$.  If both $(K_1,\phi_1,\mu_1)$ and $(K_2,\phi_2,\mu_2)$ satisfy the mass-density condition, then there exists a constant $C>0$ independent of $\phi_1,\phi_2$ and $n$, but depending on $K_1,K_2,\norm{\phi_1}_{\Cc^\gamma},\norm{\phi_2}_{\Cc^\gamma}$, such that for every $n>1$, 
	$$  \Big|\oL_n(\mu_1,\phi_1)-\oL_n(\mu_2,\phi_2) -  \min_{\cM(K_1)}\oI_{\phi_1}  +\min_{\cM(K_2)}\oI_{\phi_2} \Big|\leq  C{\log n\over n} .        $$
\end{theorem}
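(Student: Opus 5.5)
The plan is to reduce Theorem \ref{maintheorem3} to Theorem \ref{maintheorem2}. It suffices to show that the mass-density condition on $K$, together with $K$ being the closure of an open set with $\Cc^2$ boundary, implies a Bernstein--Markov inequality with polynomial loss. Concretely, I aim to show that for every $s\in H^0(X,L^n)$,
\begin{equation*}
\norm{s}_{L^2(\mu,n\phi)}\leq \norm{s}_{L^\infty(K,n\phi)}\leq C n^{A}\norm{s}_{L^2(\mu,n\phi)},
\end{equation*}
with $C,A$ depending only on $K,\norm{\phi}_{\Cc^\gamma},c,\tau$. I expect this estimate to be (or to imply) the strong Bernstein--Markov condition of Section \ref{sec-BM}. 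Even if it is not literally that condition, it gives
\begin{equation*}
\oL_n(K,\phi)\leq \oL_n(\mu,\phi)\leq \oL_n(K,\phi)+\frac{2N_n\log(Cn^A)}{nN_n},
\end{equation*}
since $\oB^2_n\subset Cn^A\,\oB^\infty_n$ and volumes scale by the $2N_n$-th power in real dimension. Hence $|\oL_n(\mu,\phi)-\oL_n(K,\phi)|=O(\log n/n)$, and Theorem \ref{maintheorem1} then yields the claim directly.

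To prove the inequality, fix $s$ with $\norm{s}_{L^\infty(K,n\phi)}=1$, attained at $x_0\in K$. The first step is a Bernstein--Markov/Lipschitz-type bound: the function $\log|s|_{n\phi}$ cannot drop too fast near $x_0$ inside $K$. Write locally $|s|_h=|f|e^{-n\psi}$ with $f$ holomorphic and $\psi$ a smooth local weight. The $\Cc^2$ boundary gives an interior cone (indeed an interior disc of fixed radius tangent at boundary points). So there is a disc $D\subset K$ of radius comparable to $r$ with $x_0$ within distance $r$ of $D$. Cauchy estimates for $f$ on a slightly larger region, combined with a global sup bound of the type $\sup_X|s|_h e^{-nV^*_{K,\phi}}\le \norm{s}_{L^\infty(K,n\phi)}$ (the weighted extremal function, which is Hölder continuous for such $K$ and Hölder $\phi$), give $|\nabla (f e^{-n\cdot})|\lesssim n^{B}$ near $x_0$ on scale $r\sim n^{-B'}$. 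The Hölder continuity of $\phi$ handles the weight $e^{-n\phi}$: on a ball of radius $r=n^{-2/\gamma}$ the factor $e^{-n\phi}$ varies by at most a bounded multiplicative constant.

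The conclusion is that $|s|_{n\phi}\geq 1/2$ on $\B(x_1,r)\cap K$ for some $x_1$ near $x_0$, with $r=n^{-M}$. The mass-density condition then gives
\begin{equation*}
\norm{s}^2_{L^2(\mu,n\phi)}\geq \tfrac14\,\mu(\B(x_1,r))\geq \tfrac c4\, n^{-M\tau},
\end{equation*}
which is the desired polynomial bound. The main obstacle is the gradient step: controlling the derivative of $|s|_{n\phi}$ near a point of $K$ (possibly on $\partial K$) by a power of $n$ times the $L^\infty(K)$ norm. I would handle it via the Hölder regularity of the extremal function $V_{K,\phi}$ for $\Cc^2$-boundary $K$, which should be available from the paper's earlier sections. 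An alternative route is the Bernstein--Walsh inequality $|s|_h\le e^{nV}$ on a fixed neighborhood, with $V\le\phi+Cr^{\gamma'}$ on $r$-neighborhoods of $K$, followed by Cauchy estimates on discs of radius $r=n^{-1/\gamma'}$.
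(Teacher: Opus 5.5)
Your reduction is circular in the logic of this paper. Theorems \ref{maintheorem1} and \ref{maintheorem2} are not independent inputs here; the paper proves the three theorems in reverse order. It obtains Theorem \ref{maintheorem1} by applying the estimate \eqref{goal-theorem3} of Theorem \ref{maintheorem3} to the weighted set $(X,U_{K,\phi},\omega)$, together with Lemmas \ref{lem-K-X}, \ref{lem-dmn}, \ref{lem-strongBM}. It then derives Theorem \ref{maintheorem2} from Theorem \ref{maintheorem1}. Invoking either of them therefore assumes exactly the $O(\log n/n)$ rate you are asked to prove. What your argument shows is only this: once $L^2(\mu,n\phi)$- and $L^\infty(K,n\phi)$-balls are comparable up to a factor $n^{A}$, the three statements are equivalent. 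It produces no rate for any single pair of weighted sets.

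The comparison step itself is correct, and simpler than you make it. If $\norm{s}_{L^\infty(K,n\phi)}=1$ is attained at $x_0\in K$, Lemma \ref{lem-K-X} gives $|s|_h\le e^{nU_{K,\phi}}$ on $X$, with $U_{K,\phi}(x_0)=\phi(x_0)$. The H\"older bound of Lemma \ref{lem-UK} and a Cauchy estimate at scale $n^{-1/\gamma}$ keep $|s|_{n\phi}$ above a fixed constant on $\B(x_0,\epsilon n^{-1/\gamma})$. The mass-density condition then applies directly to this ball, since it is centred at $x_0\in K$; no interior disc is needed. So mass-density does imply the strong Bernstein--Markov condition, and Theorem \ref{maintheorem3} would follow from an \emph{independently proved} Theorem \ref{maintheorem2}. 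But no such proof is available.

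The missing ingredient is the rate for one reference comparison, which is where all the work lies. The paper fixes $(K_2,\phi_2,\mu_2)=(X,\mathbf 0,\omega)$. Using \eqref{key-equation} and the bosonization formula (Lemma \ref{boson}), it writes $\norm{\det S_n}^2_{L^2(\mu,n\phi)}=\mathcal Z_n\int_{K^{N_n}}e^{-N_n^2\oJ_{N_n,\phi}+O(N_n)}\oR_{N_n}\,\dd\mu^{N_n}$ (Lemma \ref{lem-simp}). It then proves four things:
\begin{enumerate}
\item $\oJ_{m,\phi}\ge\min_{\cM(K)}\oI_\phi-O(\log m/m)$ on $K^m$, by smearing point masses onto circles of radius $m^{-1/\gamma}$ (Proposition \ref{prop-upper-functional});
\item near-optimality of a minimizer $\mathbf f_m$ of the auxiliary functional $\oK_{m,\phi}$, using its separation (Lemma \ref{sep-fm}, Proposition \ref{prop-lower});
\item a perturbation $\mathbf F_m$ that keeps the theta factor $\oR_m\ge\kappa/2$ on the product of balls $\Omega_m$ (Lemmas \ref{lem-new-fekete}, \ref{lem-omega});
\item $|\log\mathcal Z_n|=O(N_n\log N_n)$ (Proposition \ref{prop-Zn}), by running the same two bounds for $(X,\mathbf 0,\omega)$, where $\norm{\det S_n}^2_{L^2(\omega,n\mathbf 0)}=N_n!$.
\end{enumerate}
The mass-density condition enters only at the very end, to bound $\mu^{N_n}(\Omega_{N_n}\cap K^{N_n})\ge\big(c\,(N_n^{-2/\gamma}/2)^{\tau}\big)^{N_n}$. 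This costs only $O(\log n/n)$ after dividing by $nN_n$. Your proposal has no substitute for any of these steps, so as written it does not prove the theorem.
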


The above theorems improve the result of  Dinh-Ma-Nguy\^{e}n \cite[Proposition 3.10]{din-ma-ngu-ens} for the Riemann surfaces case, where their result is valid on any K\"ahler manifold.  As an application, we can also obtain the convergence speed for Fekete measures we introduce now.

Fix a basis $S_n:=(s_1,s_2,\dots,s_{N_n})$  of $H^0 (X,L^n)$.  For any points $x_1,x_2,\dots,x_{N_n}\in X$, define
$$  \det S_n (x_1,x_2,\dots,x_{N_n}):= \det (s_j (x_\ell))_{1\leq j,\ell\leq N_n},$$
which is a section of $(L^n)^{\boxtimes N_n}$ over $X^{N_n}$. The metric $h$ induces in a canonical way a metric  $(h^n)^{\boxtimes N_n}$ on $(L^n)^{\boxtimes N_n}$. Denote this norm by $\big|\det S_n(x_1,\dots,x_{N_n})\big|_h$.
For any weight subset $(K,\phi,\mu)$, we define
$$ \norm{\det S_n}_{L^\infty(K,n\phi)}:=\sup_{(x_1,\dots,x_{N_n})\in K^{N_n}} \big|\det S_n (x_1,\dots,x_{N_n})\big|_h e^{-n\phi(x_1)-\cdots-n\phi(x_{N_n})}  $$
and
$$ \norm{\det S_n}_{L^2(\mu,n\phi)}^2:=\int_{ X^{N_n}} \big|\det S_n (x_1,\dots,x_{N_n})\big|_h ^2 e^{-2n\phi(x_1)-\cdots-2n\phi(x_{N_n})} \,\dd\mu(x_1)\cdots \dd\mu(x_{N_n}) .$$

A point $\Fc_n:=(x_{n,1},\dots,x_{n,N_n})\in K^{N_n}$ is said to be \textit{an $n$-th Fekete
	configuration} for $(K,\phi,\mu)$ if 
$$\big|\det S_n (x_{n,1},\dots,x_{N_n})\big|_h e^{-n\phi(x_1)-\cdots-n\phi(x_{N_n})}=  \norm{\det S_n}_{L^\infty(K,n\phi)}.$$
The existence of such $\Fc_n$ is guaranteed by the compactness of $X$ and it does not depends on the choice of the basis $S_n$. But
it may not be unique. The induced empirical probability measure
$$ \delta_{\Fc_n}:={1\over N_n}\sum_{1\leq j\leq N_n} \delta_{x_{n,j}}      .$$ is called \textit{an $n$-th Fekete measure}.

Berman-Boucksom-Witt \cite{ber-acta} proved that $\delta_{\Fc_n}$ converges weakly to $\nu_{K,\phi}$, settling a long-standing open conjecture, where such problem first appeared in the work of Leja \cite{leja1,leja2},  see also the survey \cite{lev-survey} and the reference therein.

By repeating the strategy in \cite{din-ma-ngu-ens} and using Theorem \ref{maintheorem2}, we obtain the following estimate for the Fekete measures, which also seems to be nearby optimal.

\begin{corollary}
Let $(K,\phi,\mu)$ be a weighted subset of $X$ and $\delta_{\Fc_n}$ be an $n$-th Fekete measure associated with $(K,\phi,\mu)$. Suppose $K$ is the closure of a non-empty  open subset with $\Cc^2$ boundary and $\phi$ is $\gamma$-H\"older  for some $0<\gamma\leq 1$.
There exists a constant $C>0$, such that for any  $\Cc^2$ test function $\psi$, we have 
$$\Big| \int_X \psi \,\dd \delta_{\Fc_n}-\int_X \psi \,\dd \nu_{K,\phi}\Big|\leq C \sqrt{\log n\over n}\,\norm{\psi}_{\Cc^2}.$$
\end{corollary}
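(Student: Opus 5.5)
The plan is to follow the variational strategy of Dinh--Ma--Nguy\^en and use Theorem \ref{maintheorem2} (or Theorem \ref{maintheorem1}) as the quantitative input. Fix $\psi\in\Cc^2$ with $\norm{\psi}_{\Cc^2}\le 1$ and a small parameter $t\in\R$ with $|t|\le 1$. We compare the weighted subsets $(K,\phi)$ and $(K,\phi+t\psi)$. Note that $\phi+t\psi$ is still $\gamma$-H\"older with uniformly bounded norm, so the constant $C$ in Theorem \ref{maintheorem1} is uniform in $t$ and $\psi$.

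\textbf{Step 1: Fekete points and the $L^\infty$-norm of the determinant.} The volume of $\oB^\infty_n(K,\phi)$ is comparable to $\norm{\det S_n}_{L^\infty(K,n\phi)}^{-2}$ up to factors $e^{O(N_n\log N_n)}$. This is the standard comparison between $L^\infty$ and $L^2$ volumes via a Fekete configuration: the Lagrange sections have sup-norm $1$, and the $L^2$ and $L^\infty$ norms differ by at most $N_n$. Hence
$$\mathcal{L}_n(K,\phi)=-\tfrac{2}{nN_n}\log\norm{\det S_n}_{L^\infty(K,n\phi)}+O\!\left(\tfrac{\log n}{n}\right).$$
Set $F_n(t):=-\frac{2}{nN_n}\log\norm{\det S_n}_{L^\infty(K,n(\phi+t\psi))}$. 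Since $F_n$ is a supremum over configurations of affine functions of $t$ with a minus sign, it is concave. Its slope at $t=0$ along a Fekete configuration $\Fc_n$ is $2\int\psi\,\dd\delta_{\Fc_n}$, in the sense that
$$F_n(t)\le F_n(0)+2t\int\psi\,\dd\delta_{\Fc_n}\quad\text{for all } t.$$
Indeed, evaluating at $\Fc_n$ gives a lower bound on the norm, hence an upper bound on $F_n(t)$.

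\textbf{Step 2: The limit functional.} Let $E(t):=\min_{\cM(K)}\oI_{\phi+t\psi}$. By Theorem \ref{maintheorem1} (with Step 1),
$$F_n(t)-F_n(0)=E(t)-E(0)+O\!\left(\tfrac{\log n}{n}\right),$$
uniformly in $|t|\le1$. Next we need $E$ differentiable with a quadratic remainder:
$$E(t)=E(0)+2t\int\psi\,\dd\nu_{K,\phi}+O(t^2).$$
The upper bound $E(t)\le E(0)+2t\int\psi\,\dd\nu_{K,\phi}$ is immediate by testing with $\nu_{K,\phi}$. For the lower bound, write $\oI_{\phi+t\psi}(\nu)=\oI_\phi(\nu)+2t\int\psi\,\dd\nu$. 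Then use strict convexity: $\oI_\phi(\nu)-\oI_\phi(\nu_{K,\phi})\ge\|\nu-\nu_{K,\phi}\|_{\text{energy}}^2$, where the energy norm is the Dirichlet norm of $U^\star_\nu-U^\star_{\nu_{K,\phi}}$. Combine this with $|\int\psi\,\dd(\nu-\nu_{K,\phi})|\lesssim\norm{\psi}_{\Cc^2}\|\nu-\nu_{K,\phi}\|_{\text{energy}}$, which holds since $\ddc\psi$ is bounded. Minimizing the resulting quadratic in $\|\nu-\nu_{K,\phi}\|$ gives the $-Ct^2$ lower bound.

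\textbf{Step 3: Conclusion.} Combine the inequality of Step 1 with the two-sided estimates of Step 2:
$$2t\int\psi\,\dd\delta_{\Fc_n}\ \ge\ F_n(t)-F_n(0)\ \ge\ 2t\int\psi\,\dd\nu_{K,\phi}-Ct^2-C\tfrac{\log n}{n}.$$
Take $t=\pm\sqrt{\log n/n}$ and divide by $|t|$. This gives both signs of
$$\Big|\int\psi\,\dd\delta_{\Fc_n}-\int\psi\,\dd\nu_{K,\phi}\Big|\le C\sqrt{\tfrac{\log n}{n}},$$
and then scale by $\norm{\psi}_{\Cc^2}$.

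The main obstacle I expect is Step 2's quadratic control of $E(t)$, i.e.\ the coercivity of $\oI_\phi$ around its minimizer on $\cM(K)$. The remaining pieces are a careful check that the constant in Theorem \ref{maintheorem1} is uniform over the family $\phi+t\psi$, and the $O(\log n/n)$ comparison in Step 1, which both follow from the stated dependence only on $\Cc^\gamma$-norms.
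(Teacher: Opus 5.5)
Your proposal is correct and follows the Dinh--Ma--Nguy\^en variational strategy that the paper invokes: the one-sided first-variation bound obtained by evaluating $\|\det S_n\|_{L^\infty(K,n(\phi+t\psi))}$ at the Fekete configuration, the $O(n^{-1}\log n)$ rate from the main theorems applied uniformly to the weights $\phi+t\psi$, and a quadratic bound on $t\mapsto\min_{\cM(K)}\oI_{\phi+t\psi}$, with the choice $t=\pm\sqrt{\log n/n}$. The remaining differences are minor: you feed in Theorem~\ref{maintheorem1} through the Lagrange-interpolation volume comparison rather than Theorem~\ref{maintheorem2}, and you get the quadratic control self-containedly from the Dirichlet-energy coercivity of the quadratic functional $\oI_\phi$ at its minimizer $\nu_{K,\phi}$ (expand and use minimality along segments), which is valid on a Riemann surface and needs only $\Cc^1$ control of $\psi$ in that step.
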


Dinh-Ma-Nguy\^{e}n \cite{din-ma-ngu-ens} showed the above convergence holds for any K\"ahler manifold, with a bit slower speed.
Such a convergence has been also obtained by Lev and Ortega-Cerd\`a in \cite{lev-jems}, with an optimal $O(n^{-1/2})$ rate, but only for the special case $K=X,\phi=\mathbf 0$. See also \cite{ahn-mathz,ame-jfa,dinh-tams,duj-book,vu-amj} for some recent works on the  Fekete measures.

\medskip

We now explain briefly our main idea of this article.
Instead of using the approach on Bergman kernel (c.f.\ \cite{auv-ma-mann,cat-ber,co-lu-adv,dinh-jfa1,ma-book,ma-mar-mann,mar-sav-mann,mar-vu-crelle,tian-jdg,zel-imrn-sze}), we apply a  bosonization formula for $|\det S_n|_h$. That allows us to decompose $|\det S_n|_h$ into the product of the norm of a Riemann-theta section and a dominated term:   exponential of summing $N_n(N_n-1)$ Green functions. Most of this article is devoted to prove the upper bound of the dominated term. Moreover, to show the bound is optimal, we need to find exactly where are the maximums appearing. To archive this, we borrow a clever method from  Nishry-Wennman\cite{Nishry-Wennman} in probability theory, defining an auxiliary functional to connect the energy functional and discrete energy functional.

The same approach should also be able to apply in other problems, such as the large deviation principle for beta-ensembles, see e.g.\  \cite{ber-cmp,dinh-tams,gho-nis-con}. To keep the present paper in a reasonable length, we postpone them to our future works.

\medskip
\noindent
\textbf{Notations:} We write   $f\leq O(g)$ if $f\leq C |g|$ for some $C>0$.  The dependence of this constant $C$ on certain parameters, or lack thereof, will be clear from the context. We denote  by $\dist$ the distance on $X$ induced by $\omega $ and write   $\B(x,r)$ the open ball with center $x$ and radius $r$. We use $\partial$ to denote the boundary.

\medskip
\noindent\textbf{Acknowledgements.}  
Thank Professor Tien-Cuong Dinh for his valuable remarks for helping to improve the introduction.

\section{Abel-Jacobi Theory and Green function}

\subsection{Jacobian variety}\label{sec-jac}
We fix a family $\alpha_1,\dots,\alpha_g,\beta_1,\dots,\beta_g$ of $1$-cycles in $X$ which induces a canonical basis for $H_1(X,\Z)$ in the sense that $\alpha_j$ intersects $\beta_j$ once positively for $j=1,\ldots,g$, and any other pair among these cycles does not intersect. Let $\{\phi_1,\dots,\phi_g\}$ be a basis for the complex vector space of holomorphic differential $1$-forms on $X$, and we denote by $\Phi$ the column vector of $g$ entires $\phi_1,\ldots,\phi_g$. The period matrix $\Omega$ is the $g\times 2g$ matrix defined by 
$$\Omega:=\Big(\int_{\alpha_1} \Phi, \,\dots\,,  \int_{\alpha_g} \Phi  , \, \int_{\beta_1}\Phi,\,\dots\, ,\, \int_{\beta_g} \Phi\Big).$$
One can choose the basis $\Phi$ so that $\Omega=(I,\Omega')$, where $I$ is the $g\times g$ identity matrix and $\Omega'$ is a $g\times g$ symmetric matrix having positive definite imaginary part. 

The $2g$ columns of  $\Omega$ are linearly independent over $\R$. Thus, they generate a lattice $\Lambda$ in $\C^g$. We define the \textit{Jacobian variety} of $X$ by
$$\mathrm{Jac}(X):= \C^g/ \Lambda,$$
which is a $g$-dimensional complex torus. Fix a base point $p_\star\in X$ throughout this article. The Abel-Jacobi map $\oA:X \to \mathrm{Jac}(X)$ associated to $p_\star$ is defined by
$$\oA(x):=\Big(\int_{p_\star}^x \phi_1,\,\dots\,,\,\int_{p_\star}^x \phi_g \Big) \quad  \text{mod} \,\,\Lambda.$$
It is independent of the choices of path from $p_\star$ to $x$. The  Abel-Jacobi maps associated to two different points $p_\star$ and $p_\star'$ just differ by a translation on the complex torus. See   \cite{PAG} for more details.

For every $t>1$, we define $\oA_t: X^t\to \mathrm{Jac}(X)$ by 
$$\oA_t(p_1,\dots,p_t):=\sum_{j=1}^t\oA(p_j)= \sum_{j=1}^t\Big(\int_{p_\star}^{p_j} \phi_1,\,\dots \,,\,\int_{p_\star}^{p_j} \phi_g \Big) \quad  \text{mod} \,\,\Lambda.$$ 
The map $\oA_1=\oA$ is injective and  $\oA_g$ is surjective by Jacobi inversion theorem.

\subsection{Divisors on compact Riemann surfaces}\label{sec-divisor}

On  the  compact Riemann surface $X$, a \textit{divisor} is a finite formal linear combination $\oD:=\sum a_j x_j$ with $a_j\in \Z$ and $x_j\in X$. If $a_j\geq 0$ for all $j$, then we say that $\oD$ is an \textit{effective divisor}. The sum $\sum a_j$ is called the \textit{degree} of $\oD$. For a meromorphic function $f$ on $X$ (resp.\ a holomorphic  section $s$ of $L$), we write $(f)$ (resp.\ $(s)$) the divisor associated to the zeros and poles of $f$ (resp. to the zeros of $s$). 
We call \textit{principal divisor} any divisor associated to a meromorphic function. Its degree is always equal to 0. Abel's theorem says that  $\oA_t(p_1,\dots,p_t)=\oA_t(p_1', \dots ,p_t')$ if and only if  $p_1+\cdots+p_t-p_1'-\cdots-p_t'$ is a principal divisor. In this case,  we also say that $p_1+\cdots+p_t$ and $p_1'+\cdots+p_t'$ are \textit{linearly equivalent}.

We can associated to each divisor of $X$ a holomorphic line bundle, see \cite[page 134]{PAG}.
If $\mathcal D$ is a divisor on $X$, we denote by $\Oc(\mathcal D)$ the corresponding line bundle.

Denote by $\W\subset X^g$ the critical set of $\oA_g$ (this set is empty when $g=1$). 
A point  $(q_1,\dots,q_g)\in X^g$ belongs to $\W$ if and only if $\dim  H^0(X,\Oc(q_1+\cdots+q_g))\geq 2$. The image of $\W$ under $\oA_g$ is called the \textit{Wirtinger subvariety} and we denote it by $W_g^1$. It is a $2$-codimensional  analytic subset of $\mathrm{Jac}(X)$.

As a corollary of Jacobi inversion theorem, every divisor of degree greater than or equal to $g$ on $X$ is linearly equivalent to an effective divisor. For any $(p_1,\dots ,p_m)\in X^m$, there exists $(q_1,\dots,q_g)\in X^g$ such that 
\begin{equation*}\label{e:p+q=L}
\Oc(p_1+\cdots +p_m)\otimes \Oc(q_1+\cdots+q_g)\simeq L^n. 
\end{equation*}
Equivalently, if $L^n\simeq \Oc(w_1+\cdots+w_n)$, then 
\begin{equation*}\label{e:p+q=L-oA}
\oA_m(p_1,\dots ,p_m)+\oA_g(q_1,\dots,q_g)=\oA_n(w_1,\dots ,w_n).
\end{equation*}
We can define 
$$\oA_n(L^n):=\oA_n(w_1,\dots,w_n)$$
because the last quantity is independent of the choice of $w_1,\ldots,w_n$, according to Abel's theorem.
Observe that the choice of $q_1+\cdots+q_g$ is unique if $\dim H^0(X,\Oc(q_1+\cdots+q_g))=1$, or equivalently, if $\oA_n(L^n)-\oA_m(p_1,\dots ,p_m)  \notin W_g^1$. For more details on the line bundles over compact Riemann surfaces and Abel-Jacobi map, the readers may refer to \cite{demailly:agbook,PAG,gun-book}.

\begin{lemma} \label{lem-degree1}
	Let $L'$ be a positive line bundle on $X$ of degree $d > 1$. Then there exists a positive line bundle $L$ of degree 1 such that $L^d \simeq L'$.
\end{lemma}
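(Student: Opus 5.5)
The plan is to use the structure of the Picard group of $X$. On a compact Riemann surface, holomorphic line bundles up to isomorphism are the same as divisors up to linear equivalence (see \cite[page 134]{PAG}). The degree-zero part $\mathrm{Pic}^0(X)$ is identified, through the Abel-Jacobi map and Abel's theorem, with the complex torus $\mathrm{Jac}(X)=\C^g/\Lambda$. The key fact we need is that a complex torus is a divisible group: for every $\xi\in\C^g/\Lambda$ and every integer $d\geq 1$, the class of $\xi/d$ in $\C^g/\Lambda$ satisfies $d\cdot(\xi/d)=\xi$.

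\textbf{Step 1: a first degree-one bundle.} Fix the base point $p_\star$ and set $L_0:=\Oc(p_\star)$. This is a line bundle of degree $1$, and $L_0^d=\Oc(d\,p_\star)$.

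\textbf{Step 2: reduce to a degree-zero bundle.} The bundle $M:=L'\otimes L_0^{-d}$ has degree $\deg L'-d=0$. By Section \ref{sec-divisor}, every divisor of degree at least $g$ is linearly equivalent to an effective one, so $L'\otimes\Oc(g\,p_\star)\simeq\Oc(w_1+\cdots+w_{d+g})$ for some points $w_j$. Hence $M\simeq\Oc(\mathcal D)$ with
$$\mathcal D=w_1+\cdots+w_{d+g}-(d+g)\,p_\star,$$
a divisor of degree zero.

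\textbf{Step 3: divide in the Jacobian.} Let $\xi:=\oA_{d+g}(w_1,\dots,w_{d+g})\in\mathrm{Jac}(X)$; since $\oA(p_\star)=0$, this is the Abel-Jacobi image of $\mathcal D$. Choose $\eta\in\mathrm{Jac}(X)$ with $d\eta=\xi$, for instance the class of $\xi/d$. By the Jacobi inversion theorem ($\oA_g$ is surjective), there are points $q_1,\dots,q_g$ with $\oA_g(q_1,\dots,q_g)=\eta$. Put
$$N:=\Oc(q_1+\cdots+q_g-g\,p_\star),$$
a degree-zero bundle. Then $N^d=\Oc(d q_1+\cdots+dq_g-dg\,p_\star)$ and $\mathcal D$ are degree-zero divisors with the same Abel-Jacobi image $d\eta=\xi$. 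By Abel's theorem their difference is principal, so $N^d\simeq M$.

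\textbf{Step 4: conclude.} Set $L:=L_0\otimes N$. It has degree $1$, and
$$L^d=L_0^d\otimes N^d\simeq L_0^d\otimes M\simeq L'.$$
A line bundle of degree $1$ on a compact Riemann surface is positive: pick any smooth metric, whose curvature integrates to $1$, and modify it by a weight $e^{-u}$, where $u$ solves $\ddc u=c_1-\omega_0$ for a positive form $\omega_0$ of total mass $1$. This makes the curvature strictly positive, so $L$ is a positive line bundle of degree $1$ with $L^d\simeq L'$.

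\textbf{Main obstacle.} The only delicate point is Step 3, namely the bookkeeping that identifies degree-zero divisor classes with points of $\mathrm{Jac}(X)$ so that Abel's theorem gives $N^d\simeq M$. This needs the Abel-Jacobi map to be additive on divisors with negative coefficients as well. It is, because $\oA(p_\star)=0$ and the map is defined additively; with that, Abel's theorem as stated in Section \ref{sec-divisor} applies after moving all negative terms to the other side.
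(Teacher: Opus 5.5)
Your proof is correct. It uses the same ingredients as the paper (Jacobi inversion, division in $\mathrm{Jac}(X)=\C^g/\Lambda$, Abel's theorem), but the way you reach degree $\geq g$ is genuinely different, and this matters.

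The paper passes to the \emph{power} $L'^g$ and writes $L'^g\simeq\Oc(x_1+\cdots+x_{gd})$. It then chooses $y_1,\dots,y_g$ with $\oA_g(y_1,\dots,y_g)=[\mathbf z]/(gd)$ and sets $L=\Oc(y_1+\cdots+y_g-(g-1)p_\star)$. What Abel's theorem actually delivers there is $L^{gd}\simeq L'^g$, i.e.\ $(L^d\otimes L'^{-1})^{g}\simeq\Oc$. That determines $L^d$ only up to a $g$-torsion point of $\mathrm{Pic}^0(X)$, which is a nontrivial ambiguity when $g\geq 2$. Indeed, $[\mathbf z]/(gd)$ depends on the lift of $[\mathbf z]$ to $\C^g$, and only suitable lifts give $L^d\simeq L'$. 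The paper leaves that choice unspecified.

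You instead reach degree $\geq g$ \emph{additively}, by twisting with $\Oc(g\,p_\star)$. The degree-zero class $M=L'\otimes\Oc(-d\,p_\star)$ therefore keeps all the information of $L'$, and you divide only by $d$. Then every $\eta$ with $d\eta=\xi$ works, and you get $L^d\simeq L'$ exactly. Your argument thus supplies precisely the step the paper's version needs: choose the root so that $d\,\oA_g(y_1,\dots,y_g)$ equals the Abel--Jacobi class of $M$. Such a root is automatically a $gd$-th root of $[\mathbf z]$, since $[\mathbf z]$ is $g$ times that class.

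One cosmetic slip in Step 4 concerns the sign of $u$. With the convention of the paper's subsection on admissible metrics, the metric $e^{-\varphi}\fh'$ has curvature $\omega'+\ddc\varphi$. So you need $\ddc u=\omega_0-c_1$ rather than $c_1-\omega_0$.
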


\begin{proof}
	We consider the line bundle $L'^g$. By Jacobi inversion theorem, we can find $x_1,\dots, x_{gd}\in X$ such that $\Oc (x_1+\dots+ x_{gd})\simeq L'^g$. Let $[\mathbf z]:=\oA_{gd}(x_1,\dots ,x_{gd})$.   Using Jacobi inversion theorem again, there exist $y_1, \dots, y_g\in X$ such that $\oA_g (y_1, \dots, y_g)=[\mathbf z]/(gd)$. Abel's theorem implies that $$\Oc (gdy_1 +\cdots +gd y_g)  \otimes \Oc( -g^2 d  p_\star  )\simeq \Oc (x_1+\cdots+x_{gd}) \otimes \Oc (  - gd p_\star   ).$$
	Hence we take $L:= \Oc (y_1 +\cdots + y_g-gp_\star +p_\star)$ and finish the proof.
\end{proof}

From now on, we will assume $\deg(L)=1$. The general cases for $\deg(L)>1$ can be derived using Lemma \ref{lem-degree1}. Immediately, we have
$$N_n=n-g+1.    $$

\subsection{Admissible metric}

Given the K\"ahler form $\omega$ of integral $1$ on $X$, any line bundle $L'$ of degree one over $X$ admits a Hermitian metric $\fh'_\omega$ on $L'$ unique up to a multiplicative constant whose curvature $(1,1)$-form is exactly $\omega$.  Indeed, let $\fh'$ be any hermitian metric on $L'$ with curvature $(1,1)$-form $\omega'$ and $\int_X \omega'=1$. Since $\omega$ and $\omega'$ have the same integral, they are in the same cohomology class. So, there exists a smooth function $\varphi$ (unique up to a constant) such that $\ddc \varphi=\omega-\omega'$. It is not hard to check that the metric $\fh'_\omega=e^{-\varphi}\fh'$ satisfies the requirement. 
We say $\fh'_\omega$ is an \textit{$\omega$-admissible} hermitian metric of $L'$.

\subsection{Riemann theta divisor}
Recall that in Subsection \ref{sec-jac}, we choose the basis $\Phi$ so that $\Omega=(I,\Omega')$. Put
$$e_j:=\Big(\int_{\alpha_j} \phi_1, \, \dots \,, \,  \int_{\alpha_j} \phi_g  \Big) ,\quad \Omega_j':= \Big(\int_{\beta_j} \phi_1,\, \dots \, , \,  \int_{\beta_j} \phi_g  \Big). $$
They are the $2g$ columns of $\Omega$, which form an $\R$-basis for $\C^g$.
The complex torus $\mathrm{Jac}(X)$ admits a line bundle $L_J$ with the curvature form 
\begin{equation}\label{defn-j}
\omega_J=\sum_{j=1}^g  \dd x_j \wedge \dd x_{j+g},
\end{equation}
where $x_1,x_2,\dots,x_{2g}$ are the real coordinates  with respect to the $\R$-basis $\{e_1,\dots,e_g,\Omega_1',\dots,\Omega_g'\}$. Let $\fh_J$ be a fixed $\omega_J$-admissible hermitian metric of $L_J$.

From \cite[Page 333]{PAG}, we see that  $H^0(X,L_J)=1$. 
The line bundle $L_J$ has a global holomorphic section $\widetilde \theta$ represented by the \textit{Riemann theta function} $\theta$, which is holomorphic on $\C^g$ and satisfying
$$\theta(\mathbf z+e_j)=\theta(\mathbf z) \quad \text{and}\quad \theta(\mathbf z+\Omega'_j)=e^{-2\pi i(\Omega'_j+\Omega'_{j,j}/2)}\theta(\mathbf z)$$
for every $\mathbf z\in \C^g$ and $1\leq j\leq g$. Here, $\Omega_{j,j}'$ is  the $(j,j)$-entry of the symmetric matrix $\Omega'$. 
Let $\Theta:=(\widetilde \theta)$ be the divisor of $\widetilde\theta$, called the \textit{Riemann theta divisor}.
Riemann's  theorem (c.f.\ \cite[Page 338]{PAG}) states that there exists a unique point $z_\star\in \mathrm{Jac}(X)$ such that $W_{g-1}=\Theta +z_\star$. This $z_\star$ depends on the choice of $p_\star$ and $\widetilde \theta$.

\subsection{Green function}

For every $x\in X$, consider the mean normalized $\omega$-potential $U^\star_{\delta_x}$ of $\delta_x$. Gluing all such functions together, we get the \textit{Green function} defined on $X\times X$ (c.f.\ \cite[Theorem 2.2]{Lang}).

\begin{lemma} \label{l:Green}
	There exists a Green function $G(x, y)$ of $(X, \omega)$ defined on $X \times X$, which is smooth outside the diagonal, such that for every $x \in X$,
	\[
	\int_X G(x, \cdot) \, \omega = 0 \qquad \text{and} \qquad \ddc G(x, \cdot) = \delta_x - \omega.
	\]
	Moreover, the function $\varrho(x, y) := G(x, y) - \log \dist(x, y)$ is Lipschitz.
\end{lemma}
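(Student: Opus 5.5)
We will construct $G$ from a fundamental solution of the Laplacian on $(X,\omega)$ and then correct it so that it has the right normalization and symmetry. The natural route is Hodge theory for the scalar Laplacian $\Delta_\omega$ associated to $\omega$. In the convention $\ddc=\frac{i}{\pi}\partial\dbar$, we have $\ddc f=c\,(\Delta_\omega f)\,\omega$ for a normalizing constant $c$.

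First we solve $\Delta_\omega u=f$ for smooth $f$ with $\int_X f\,\omega=0$. This is possible because $\ker\Delta_\omega$ consists of the constants on the compact connected surface $X$. The resolvent on the orthogonal complement of the constants has a Schwartz kernel $G(x,y)$. By elliptic regularity this kernel is smooth off the diagonal and symmetric in $x,y$. In distributional form, the equation $\ddc G(x,\cdot)=\delta_x-\omega$ expresses exactly that $\omega$ is the harmonic projection of $\delta_x$.

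We fix the remaining additive constant in each $G(x,\cdot)$ by requiring $\int_X G(x,\cdot)\,\omega=0$. With this normalization, $G(x,\cdot)$ is precisely the mean normalized potential $U^\star_{\delta_x}$. Uniqueness of $U^\star_{\delta_x}$ shows that this construction agrees with gluing the potentials $U^\star_{\delta_x}$ pointwise in $x$.

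Next we analyse the local singularity. Work in a holomorphic chart $z$ near $x$, and write $\omega=e^{\psi}\frac{i}{2}dz\wedge d\bar z$ with $\psi$ smooth. Since $\ddc\log|z|=\delta_0$, the function $G(x,\cdot)-\log|z-z(x)|$ solves $\ddc v=-\omega$ near $x$. So this difference is smooth, with derivatives depending smoothly on $x$, uniformly for $x$ in the chart.

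This is done concretely by a parametrix. Take a cutoff $\chi$ and set
$$P(x,y):=\chi(\dist(x,y))\log\dist(x,y).$$
Then $\ddc_y P(x,\cdot)=\delta_x+R(x,\cdot)$, where $R$ is bounded. We then solve
$$\ddc_y E(x,\cdot)=-R(x,\cdot)-\omega,$$
possibly after adding a constant multiple of $\omega$ so that the total mass is zero. Finally, we set $G=P+E$ minus its $\omega$-mean.

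It remains to show that $\varrho=G-\log\dist$ is Lipschitz on $X\times X$. Away from the diagonal everything is smooth, so only a neighbourhood of the diagonal matters. There, in a chart, we write
$$\log\dist(x,y)=\log|z(x)-z(y)|+\tfrac12\log\frac{\dist(x,y)^2}{|z(x)-z(y)|^2}.$$
The quotient $\dist^2/|z(x)-z(y)|^2$ extends continuously to the diagonal with positive value $e^{\psi}$. It is in fact Lipschitz, because $\dist^2$ is smooth near the diagonal and its Hessian there is the metric. The term $E$ is $\Cc^{1,\alpha}$ jointly in $(x,y)$ by elliptic estimates, since $R$ is bounded with bounded dependence on $x$.

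The main obstacle is this last regularity claim: the error term must be Lipschitz jointly in $(x,y)$ across the diagonal. We handle it by choosing the parametrix carefully. We take $P=\log|z(x)-z(y)|$ in a chart multiplied by a cutoff; then $R$ is smooth, $E$ is jointly smooth, and Lipschitzness reduces to the quotient term above, which is smooth in $(x,y)$ up to the diagonal. Symmetry of $G$ is not needed for the statement, but it follows from self-adjointness if desired. Alternatively, one can simply cite \cite[Theorem 2.2]{Lang}.
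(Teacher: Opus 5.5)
Your proposal is correct in substance, but it takes a different route from the paper. The paper gives no argument: it simply invokes \cite[Theorem 2.2]{Lang}, which you mention only as a fallback. You instead build $G$ by hand from a parametrix and the solution operator of $\ddc$ on mean-zero data.

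The two routes buy different things. The citation is short, but what Lang's theorem supplies is the coordinate form of the singularity: $G(x,y)-\log|z(x)-z(y)|$ is smooth near the diagonal in a holomorphic chart. The lemma's intrinsic claim that $G-\log\dist$ is Lipschitz needs one further step, which the paper leaves implicit. Namely, $\dist(x,y)^2/|z(x)-z(y)|^2$ equals $e^{\psi(x)}$ plus a cubic Taylor remainder divided by $|z(x)-z(y)|^2$, so it is Lipschitz and bounded below near the diagonal. Your construction is self-contained and makes this step explicit. It also shows why the lemma asserts only Lipschitz regularity for $\varrho$.

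A few details should be tightened.

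\begin{itemize}
\item A single chart does not cover a neighbourhood of the diagonal. Take $P(x,y)=\sum_i\chi_i(x)\eta_i(y)\log|z_i(x)-z_i(y)|$, with a partition of unity $\chi_i$ and cut-offs $\eta_i\equiv 1$ near $\supp\chi_i$. Then $R$ is jointly smooth, because the derivatives of $\eta_i$ vanish near $\supp\chi_i$. The function $E$ is jointly smooth, because the solution operator is continuous on $\Cc^\infty$. Finally, $x\mapsto\int_X P(x,\cdot)\,\omega$ is smooth.
\item The mass condition is automatic: $\int_X R(x,\cdot)=-1$ by Stokes. So do not add a multiple of $\omega$; that would break $\ddc G(x,\cdot)=\delta_x-\omega$.
\item Your Taylor argument shows the quotient is Lipschitz, not smooth up to the diagonal. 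Drop the smoothness claim; Lipschitz is all you need.
\item Away from the diagonal, $\log\dist$ is only Lipschitz (the distance can fail to be smooth at the cut locus). Again this suffices.
\item In your first parametrix $\chi(\dist)\log\dist$, the claim that $E$ is jointly $\Cc^{1,\alpha}$ is not justified. Bounds on $R$ that are merely uniform in $x$ give regularity in $y$ only. You were right to replace it with the chart parametrix, which avoids the issue.
\end{itemize}
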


For every probability measure $\nu$, it is not hard to check that
\begin{equation}\label{defn-potentil-type-I}
U^\star_\nu(x) = \int_X G(x, \cdot) \, \dd\nu .
\end{equation}
In particular,  $U^\star_\mu$ is uniformly bounded from above.

One can also define the mean normalized $\omega$-potential $U^\star_\nu$ for any positive or signed measure $\nu$ on $X$ using  \eqref{defn-potentil-type-I}. For any two positive or signed measures $\nu_1, \nu_2$ on $X$, observe that $U^\star_{\nu_1 + \nu_2} = U^\star_{\nu_1} + U^\star_{\nu_2}$. Moreover, by Stokes' formula, one can show that
\begin{equation}
\label{commute-potential}
\int_X U^\star_{\nu_1} \, \dd\nu_2 = \int_X U^\star_{\nu_2} \, \dd\nu_1,
\end{equation}
and
\begin{equation}
\label{positive energy}
\int_X U^\star_{\nu_1} \, \dd\nu_1 \leq 0.
\end{equation}

For point mass measure $\delta_x$,
there is another way to describe its mean normalized $\omega$-potential $U^\star_{\delta_x}$ by considering the line bundle $\Oc(x)$. Recall that every holomorphic section of $\Oc(x)$ corresponds to a meromorphic function  $f$ such that $(f)+x$ is an effective divisor of degree one. Denote by $\mathbf 1_{\Oc(x)}$ the holomorphic section of $\Oc(x)$  corresponding to the  function $\mathbf 1$. Let $\fh_\omega$ be an $\omega$-admissible metric on $\Oc(x)$. Then it is not hard to see that 
$ \ddc \log |\mathbf 1_{\Oc(x)}|_{\fh_\omega}=\delta_x -\omega$.
So, $$U^\star_{\delta_x}(z)=\log  \big|\mathbf 1_{\Oc(x)}(z)\big|_{\fh_\omega}+c_{\fh_\omega}$$
 for some constant  $c_{\fh_\omega}$ depending on the admissible metric chosen.

For every point $\mathbf p:=(p_1,\dots,p_m)\in X^m,m\in \N$, we denote by $\delta_{\mathbf p}$ the probability measure $(\delta_{p_1}+\cdots+\delta_{p_m})/m$. Here we omit the dependence on $m$ to ease the notation. Similarly as above, we have
\begin{equation}\label{bundlepotential} 
U^\star_{\delta_{\mathbf p}}(z)={1\over m}\sum_{1\leq j\leq m}  U^\star_{\delta_{p_j}}(z) = {1\over m}\sum_{1\leq j\leq m} \Big(\log \big|\mathbf 1_{\Oc(p_j)}(z)\big|_{\fh_j}+c_{\fh_j}\Big),
 \end{equation}
where $\fh_j$ is an $\omega$-admissible metric on $\Oc(p_j)$.
Moreover,   we can take the same weight function $\varphi$ for all the $\fh_j$'s. More precisely, in every small open ball $B$, there exist holomorphic functions $g_j$'s on $B$, such that for all $j$,
\begin{equation}\label{sameweight} |\mathbf 1_{\Oc(p_j)}|_{\fh_j} =| g_j| e^{-\varphi}   \quad\text{on }\, B.   \end{equation}
Indeed,  on the simply connected set $B$, every harmonic function is the real part of a holomorphic function.

\subsection{Upper envelop}

For a strictly negative continuous function $u$ on $\partial K$, define the upper envelop
$$  \widehat U:=    \sup_\varphi  \big\{  \varphi\in\SH(X,\omega): \; \varphi\leq 0 \text{ on } X,\; \varphi \leq u \text{ on }  \partial K  \big\}.$$    
We have the following  result, see e.g., \cite[Lemma 2.3]{wu-hole}. 
\begin{lemma}  \label{lem-upenvop}
	The function $\widehat U$ is a continuous $\omega$-subharmonic functions on $X$ satisfying
	$$ \ddc \widehat U=-\omega   \,\text{ on }\, \{\widehat U\neq 0 \}\setminus \partial D   \quad\text{and}\quad  \widehat U=u  \,\text{ on }\, \partial D.         $$
\end{lemma}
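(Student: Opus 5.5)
Here $D$ denotes the open set whose closure is $K$, so $\partial D=\partial K$ is a $\Cc^2$ curve; I read the statement with that identification. The plan is the classical Perron argument, carried out on the compact surface with the reference form $\omega$. It has three parts: continuity of $\widehat U$, the boundary values, and $\omega$-harmonicity off the contact set.

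\textbf{Step 1: $\widehat U$ is $\omega$-subharmonic, and barriers.} The family is nonempty: it contains a very negative constant, since constants lie in $\SH(X,\omega)$ and $u$ is bounded below on the compact set $\partial K$. The family is bounded above by $0$, so the upper semicontinuous regularization $\widehat U^*$ is in $\SH(X,\omega)$ and $\widehat U^*\le 0$.

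To show $\widehat U^*\le u$ on $\partial K$, and to get continuity, I build barriers. Fix $x_0\in\partial K$ and $\epsilon>0$. Work in a coordinate disc around $x_0$ in which $\omega=\ddc\rho$ with $\rho$ smooth. The exterior ball condition, which holds because $\partial K$ is $\Cc^2$, gives a local harmonic peak function. Gluing it with the Green function $G(x_0,\cdot)$, suitably scaled, gives for each $\epsilon$ a function $\varphi\in\SH(X,\omega)$ with $\varphi\le\min(0,u)$ on $\partial K$, $\varphi$ continuous, and $\varphi(x_0)\ge u(x_0)-\epsilon$.

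Symmetrically, $\widehat U\le \psi$ for a continuous $\omega$-superharmonic-type majorant near $x_0$, namely $u(x_0)+\epsilon+A\cdot(\text{peak})-\rho$-correction. The comparison here uses the maximum principle on small discs. Together these give $\widehat U^*=\widehat U=u$ on $\partial K$, and continuity of $\widehat U$ at boundary points.

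\textbf{Step 2: continuity everywhere.} On a compact surface I would use the standard translation-type argument, replacing translation by a regularization. Take the regularizations $\widehat U_\delta\in\SH(X,(1+C\delta)\omega)$ decreasing to $\widehat U^*$ (Demailly/Blocki–Kolodziej). Continuity of the obstacle then shows that $(1+C\delta)^{-1}\widehat U_\delta-\eta(\delta)$ is a competitor, so $\widehat U^*\le\widehat U_\delta/(1+C\delta)\cdots\le \widehat U+o(1)$. Dini's theorem gives uniform convergence, hence continuity. In practice I would simply cite \cite[Lemma 2.3]{wu-hole} for this part.

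\textbf{Step 3: the Laplacian.} Take a point $x$ with $\widehat U(x)<0$ and $x\notin\partial K$, and a small disc $B\Subset\{\widehat U<0\}\setminus\partial K$. Perform the Poisson modification: replace $\widehat U$ on $B$ by the solution $v$ of $\ddc v=-\omega$ with boundary data $\widehat U|_{\partial B}$. This solution exists locally as $h-\rho$ with $h$ harmonic.

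The modified function $\tilde U$ is $\omega$-subharmonic, because $\tilde U\ge\widehat U$ on $B$ by the maximum principle. It still satisfies $\tilde U\le 0$: shrinking $B$, the continuity from Step 2 makes $\widehat U<0$ with a margin on $\bar B$, and the maximum principle controls $v$ by $\max_{\partial B}\widehat U+O(r^2)<0$. The constraint on $\partial K$ is untouched, since $B$ avoids $\partial K$. Hence $\tilde U$ is a competitor, so $\tilde U\le\widehat U$, which gives $\widehat U=v$ on $B$ and therefore $\ddc\widehat U=-\omega$ there.

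The main obstacle is the barrier construction in Step 1 at boundary points, where the $\Cc^2$ regularity of $\partial K$ enters. This is also where I would lean on the cited lemma.
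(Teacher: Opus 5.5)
The paper does not prove this lemma; it only cites \cite[Lemma 2.3]{wu-hole}. Your Perron outline is a reasonable substitute for continuity and for $\ddc\widehat U=-\omega$ off $\partial D$. However, the lower barrier in Step 1 is a genuine gap, and it cannot be filled: for a general continuous $u$ no such $\varphi$ exists.

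The classical barrier $u(x_0)-\epsilon-Aw$, built from a peak function $w$, works for functions defined on one side of a boundary. Your competitors are instead $\omega$-subharmonic on a full neighbourhood of $x_0$, across $\partial K$. Gluing with $G(x_0,\cdot)$ does not help, since its pole at $x_0$ pushes values down exactly where you need them high.

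Quantitatively, take a chart $z$ centred at $x_0$ with $\omega=\ddc\rho$. Put $\Gamma_\delta:=\partial K\cap\{\delta\le|z|\le r\}$ and $b:=\max_{\Gamma_\delta}u<0$. For any competitor, $h=\varphi+\rho$ is subharmonic on $\{|z|<r\}$. It satisfies $h\le\max\rho$ there, because $\varphi\le 0$ on $X$, and $h\le\max\rho+b$ on $\Gamma_\delta$. Let $\varpi_\delta$ be the harmonic measure of $\Gamma_\delta$ in $\{|z|<r\}\setminus\Gamma_\delta$. The two-constants theorem gives $\varphi(x_0)\le\mathrm{osc}_{\{|z|<r\}}\rho+b\,\varpi_\delta(x_0)$. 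Since $\Gamma_\delta$ contains arcs crossing the annulus, Beurling's estimate gives $\varpi_\delta(x_0)\ge 1-C\sqrt{\delta/r}$.

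Now choose $u$ with $u(x_0)=-1$ and $u=-10$ on $\partial K\cap\{|z|\ge\delta\}$, with $\delta\ll r\ll 1$. Then $\widehat U(x_0)\le-9<u(x_0)$. So in this generality only $\widehat U\le u$ on $\partial D$ holds. The identity $\widehat U=u$ on $\partial D$ needs an extra hypothesis, for instance $u=\psi|_{\partial D}$ for some $\psi\in\SH(X,\omega)$ with $\psi\le0$. In that case $\psi$ is itself a competitor and the equality is immediate. This is exactly the situation of the paper's only use of the lemma, with $u=U^\star_{\delta_{p_j}}-C$ on $\partial\B(p_j,m^{-1/\gamma})$.

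The rest of your plan survives, because Steps 2 and 3 only use the \emph{upper} half of Step 1: the uniform estimate $\limsup_{y\to x}\widehat U^*(y)\le u(x)$ for $x\in\partial K$. This estimate, combined with Dini's theorem on $\partial K$, is what makes your regularized function a competitor; continuity of $u$ alone is not enough. It follows from the same harmonic-measure mechanism: the harmonic measure of the arc $\partial K\cap\{|z|<r\}$ in $\{|z|<r\}\setminus\partial K$ tends to $1$ at $x_0$. Equivalently, you can use barriers on both sides of $\partial K$, which requires the interior as well as the exterior ball condition; both hold for a $\Cc^2$ boundary. Also note that a competitor must satisfy $\varphi\le0$ on all of $X$, not only on $\partial K$.
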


\subsection{Discrete energy}

For every point $\mathbf p:=(p_1,\dots,p_m)\in X^m,m\in \N$, we define the \textit{discrete energy} of $\mathbf p$ as
$$\oE_m(\mathbf p):=\int_{(X\times X)\setminus \Delta}G(x,y)\,\dd \delta_{\mathbf p}(x)\dd \delta_{\mathbf p}(y)={1\over m^2} \sum_{j\neq k} G(p_j,p_k),$$
Here, $G$ is the Green function defined in Lemma \ref{l:Green}.  Obviously, if $p_j=p_k$ for some $j\neq k$,  then $\oE_m(\mathbf p )=-\infty$.

\section{Bernstein-Markov condition and the equilibrium measure}

\subsection{Bernstein-Markov condition}\label{sec-BM}

Define the \textit{Bergman function} of $L^n$, associated with the weighted subset $(K,\phi,\mu)$ as
$$\rho_n(\mu,\phi)(x):=\sup \big\{ |s(x)|_{n\phi} :\, s\in H^0(X,L^n),\, \norm{s}_{L^2(\mu,n\phi)}=1 \big\}.$$
Equivalently,
$$ \rho_n(\mu,\phi)(x):=  \sum_{1\leq j\leq N_n} |s_j(x)|^2_{n\phi} $$
for any orthonormal  basis   $(s_1,s_2,\dots,s_{N_n})$  of $H^0 (X,L^n)$ with respect to $L^2(\mu,n\phi)$-norm.

We say that  the weighted subset $(K,\phi,\mu)$ satisfies the \textit{Bernstein-Markov condition} if for every $\vep>0$, there exists a constant $c_\vep>0$ such that 
$$\sup_K \rho_n(\mu,\phi)\leq c_\vep e^{\vep n}  \quad\text{for all }\, n>1.$$

We say that  the weighted subset $(K,\phi,\mu)$ satisfies the \textit{strong Bernstein-Markov condition} if there exists a constant $C>0$ such that 
$$\sup_K \rho_n(\mu,\phi)\leq Cn^C  \quad\text{for all }\, n>1.$$

In this article, we will only focus on the strong Bernstein-Markov condition.
Recall the following two results from \cite[Lemma 3.5, Lemma 3.6]{din-ma-ngu-ens}.

\begin{lemma}\label{lem-dmn}
	If the weighted subset $(K,\phi,\mu)$ satisfies the strong Bernstein-Markov condition, then there exists a constant $C>0$ independent of $n$, such that 
	$$ 0\leq \oL_n(\mu,\phi) -\oL_n(K,\phi)\leq C n^{-1}\log n \quad\text{for all }\, n>1.  $$
\end{lemma}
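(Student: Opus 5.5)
The lower bound $0\leq \oL_n(\mu,\phi)-\oL_n(K,\phi)$ follows from the inclusion $\oB^\infty_n(K,\phi)\subset \oB^2_n(\mu,\phi)$. Indeed, since $\mu$ is a probability measure supported on $K$, every $s$ satisfies $\norm{s}_{L^2(\mu,n\phi)}\leq \norm{s}_{L^\infty(K,n\phi)}$. Taking logarithms of the volumes and dividing by $nN_n$ gives the inequality.

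For the upper bound I would compare the two norms in the opposite direction using the Bergman function. Take an $L^2(\mu,n\phi)$-orthonormal basis $(s_1,\dots,s_{N_n})$ and write $s=\sum a_j s_j$. By Cauchy--Schwarz, at every $x\in K$,
$$|s(x)|_{n\phi}^2\leq \Big(\sum|a_j|^2\Big)\sum_j |s_j(x)|^2_{n\phi}=\norm{s}^2_{L^2(\mu,n\phi)}\,\rho_n(\mu,\phi)(x).$$
The strong Bernstein--Markov condition then gives
$$\norm{s}_{L^\infty(K,n\phi)}\leq (Cn^C)^{1/2}\norm{s}_{L^2(\mu,n\phi)}.$$
(Whether $\rho_n$ is read as the sum of squares or as the supremum of $|s(x)|$, one gets a bound of the form $\sqrt{C}n^{C'}$, which is all that matters.) Hence
$$\oB^2_n(\mu,\phi)\subset \lambda_n\,\oB^\infty_n(K,\phi),\qquad \lambda_n:=\sqrt{C}n^{C/2}.$$

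Since $H^0(X,L^n)$ has real dimension $2N_n$, scaling by $\lambda_n$ multiplies Lebesgue volume by $\lambda_n^{2N_n}$. Therefore
$$\log\vol\oB^2_n(\mu,\phi)\leq 2N_n\log\lambda_n+\log\vol\oB^\infty_n(K,\phi).$$
Dividing by $nN_n$ yields
$$\oL_n(\mu,\phi)-\oL_n(K,\phi)\leq \frac{2\log\lambda_n}{n}=\frac{\log C+C\log n}{n}\leq C'\frac{\log n}{n}$$
for $n>1$, after enlarging the constant to absorb $\log C$ (using $\log n\geq\log 2$).

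There is no real obstacle here. The only points needing care are the real-dimension factor $2N_n$ in the volume scaling and the fact that the normalization of $\vol$ cancels because both balls sit in the same vector space.
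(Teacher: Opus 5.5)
Your argument is correct, and it is the standard one: the paper gives no proof of this lemma and simply quotes it from Dinh--Ma--Nguy\^{e}n, where it rests on the same chain $\norm{s}_{L^2(\mu,n\phi)}\leq\norm{s}_{L^\infty(K,n\phi)}\leq(\sup_K\rho_n)^{1/2}\norm{s}_{L^2(\mu,n\phi)}$ followed by scaling Lebesgue volume in real dimension $2N_n$. Your remark on the inconsistent definition of $\rho_n$ (supremum of $|s(x)|_{n\phi}$ versus sum of squares) is handled correctly, since either reading gives a polynomial bound on the comparison constant $\lambda_n$.
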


\begin{lemma}\label{lem-strongBM}
	The weighted subset $(X,\phi,\omega)$ for $\phi\in\Cc^\gamma(X)$ satisfies the strong Bernstein-Markov condition.
\end{lemma}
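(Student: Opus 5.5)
The plan is to prove the strong Bernstein--Markov bound by a direct sub-mean-value argument on small balls, choosing the radius to absorb the oscillation of the weights. By the extremal characterization of $\rho_n(\omega,\phi)$, it suffices to show that there is $C>0$ such that for every $n>1$, every $x\in X$ and every $s\in H^0(X,L^n)$ with $\norm{s}_{L^2(\omega,n\phi)}=1$,
$$|s(x)|^2_{n\phi}\leq C n^{C}.$$
For the sum-of-squares formulation, taking $\sup$ over unit $s$ of $|s(x)|^2_{n\phi}$ gives exactly $\sum_j|s_j(x)|^2_{n\phi}$, so the same bound suffices.

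Fix $x\in X$ and a holomorphic coordinate $z$ centered at $x$ on a ball of fixed size; the constants will be uniform by compactness of $X$. Trivialize $L$ there by a holomorphic frame $e$, so that $|e|_h=e^{-\psi}$ with $\psi$ smooth, and write $s=fe^{\otimes n}$ with $f$ holomorphic. Then
$$|s|^2_{n\phi}=|f|^2e^{-2n\psi-2n\phi}.$$

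The first step removes the smooth weight $\psi$. Its second-order Taylor expansion at $x$ has the form $\psi(z)=\psi(x)+\mathrm{Re}\,q(z)+\ell(z,\bar z)+O(|z|^3)$, where $q$ is a holomorphic polynomial of degree at most $2$ and $\ell$ is the Hermitian part, which is $O(|z|^2)$. Setting $g:=nq$, which is holomorphic, we get
$$\big|n\psi(z)-n\psi(x)-\mathrm{Re}\,g(z)\big|\leq Cn|z|^2 .$$
The second step handles $\phi$, using only Hölder continuity:
$$n|\phi(z)-\phi(x)|\leq Cn|z|^\gamma .$$

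Now choose the radius $r:=n^{-1/\gamma}$. Since $\gamma\leq1$, we have $r\leq n^{-1}$, so on $\B(x,r)$ both $n|z|^2\leq n^{-1}$ and $n|z|^\gamma\leq1$ are bounded. Hence, on $\B(x,r)$,
$$|s|^2_{n\phi}\asymp |fe^{-g}|^2\,e^{-2n\psi(x)-2n\phi(x)},$$
with multiplicative constants independent of $n$ and $x$.

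The function $|fe^{-g}|^2$ is subharmonic, and $\omega$ is comparable to Lebesgue measure in the chart. The sub-mean-value inequality on $\B(x,r)$ therefore gives
$$|s(x)|^2_{n\phi}\leq \frac{C}{r^2}\int_{\B(x,r)}|s|^2_{n\phi}\,\omega\leq C\,n^{2/\gamma}\norm{s}^2_{L^2(\omega,n\phi)}=Cn^{2/\gamma}.$$
This is the strong Bernstein--Markov bound with exponent $\max(C,2/\gamma)$.

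The only delicate point is the choice of $g$. A naive bound $n|\psi(z)-\psi(x)|\leq Cn|z|$ would force $r\sim 1/n$, which is harmless, but it is cleaner to subtract the pluriharmonic part. In any case the Hölder term dictates $r=n^{-1/\gamma}$, and the resulting loss is only polynomial in $n$, which is exactly what the strong Bernstein--Markov condition allows.
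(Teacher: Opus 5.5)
Your proof is correct. The paper gives no argument of its own for this lemma; it simply quotes it from Dinh--Ma--Nguy\^{e}n \cite[Lemma 3.6]{din-ma-ngu-ens}, where it is stated in the general compact K\"ahler setting.

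Your proposal replaces that citation with a self-contained argument in the Riemann surface case. You use a single sub-mean-value inequality for the holomorphic function $fe^{-g}$ on a coordinate disc of radius $r=n^{-1/\gamma}$. On that disc both the H\"older oscillation $n|\phi(z)-\phi(x)|$ and the oscillation of the smooth weight of $h$ are $O(1)$.

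What the citation buys is generality at no cost. What your route buys is an explicit bound $\sup_X\rho_n(\omega,\phi)\le Cn^{2/\gamma}$, with $C$ depending only on $\norm{\phi}_{\Cc^\gamma}$ and the fixed data $(X,L,h,\omega)$. That is exactly the uniformity needed when the lemma is applied to $U_{K,\phi}$ in the proof of Theorem \ref{maintheorem1}, since Lemma \ref{lem-UK} controls $\norm{U_{K,\phi}}_{\Cc^\gamma}$ by $\norm{\phi}_{\Cc^\gamma}$.

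Two cosmetic remarks:
\begin{itemize}
\item As you observe yourself, the second-order Taylor expansion of $\psi$ is superfluous. Since $\gamma\le 1$, on the disc $n|z|\le n^{1-1/\gamma}\le 1$, so the first-order bound on $n|\psi(z)-\psi(x)|$ already suffices.
\item To keep the disc inside a chart of a fixed finite atlas for every $n>1$, take $r=\min(r_0,n^{-1/\gamma})$, where $r_0$ is the chart size. This only changes constants.
\end{itemize}
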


\subsection{Minimizer of the functional $\oI_\phi$} \label{sec-min}

In the following, we will give the solution of the problem
$$  \min \oI_\phi(\nu), \quad  \supp(\nu)\subset \cM(K).           $$

Define 
$$U_{K,\phi}:= \sup_\varphi  \big\{  \varphi\in \SH(X,\omega): \; \varphi \leq \phi \,\text{ on }\, K\big\}.$$
The following regularity of $U_{K,\phi}$ was given in \cite[Theorem 2.7]{din-ma-ngu-ens}.

\begin{lemma}\label{lem-UK}
If   $K$ is the closure of a non-empty open subset of $X$ with $\Cc^2$ boundary and  $\phi$ is  $\gamma$-H\"older, then	the function $U_{K,\phi}$ is  $\gamma$-H\"older and $\norm{U_{K,\phi}}_{\Cc^\gamma}\leq c\norm{\phi}_{\Cc^\gamma}$ for some $c>0$ independent of $\phi$.    In particular, $U_{K,\phi}$ is $\omega$-subharmonic and $\ddc U_{K,\phi}=-\omega$ on $X\setminus \{ U_{K,\phi}=\phi\}$ and $X\setminus K$.
\end{lemma}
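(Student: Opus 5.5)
The plan is to prove the two assertions separately: first the complement-of-contact-set statement by a Perron/balayage argument, then the H\"older bound by smoothing $\phi$ and reducing to a Lipschitz estimate for envelopes with smooth weights. Throughout, write $U:=U_{K,\phi}$. Since $K$ is the closure of an open set with $\Cc^2$ boundary, it is non-pluripolar and regular. Hence the upper semicontinuous regularization $U^*$ is again $\omega$-subharmonic and satisfies $U^*\le\phi$ on $K$, so $U=U^*\in\SH(X,\omega)$. The constant function $\min_K\phi$ (recall $\omega\ge0$) is a competitor, which gives $U\ge\min_K\phi$, and $U\le \max_K\phi+C$ by the usual compactness of $\SH(X,\omega)$ normalized on $K$.

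\textbf{The equation $\ddc U=-\omega$ off the contact set and off $K$.} I will use the usual balayage argument. Take a small disc $B$, either in $X\setminus K$ or in $\{U<\phi\}$; in the latter case shrink $B$ so that $U<\phi-\vep$ on $\overline B$, using continuity, which comes from the H\"older part below or first from the regularity of $K$. On $B$ write $\omega=\ddc w$ with $w$ a smooth local potential. Let $h$ be the Perron solution on $B$ of the Dirichlet problem with boundary data $U+w$, and define $\tilde U:=h-w$ on $B$ and $\tilde U:=U$ elsewhere. Then $\tilde U\in\SH(X,\omega)$ and $\tilde U\ge U$. Moreover $\tilde U\le\phi$ on $K$: this is trivial if $B\cap K=\emptyset$, and in the other case it follows from the maximum principle together with the gap $\vep$, after shrinking $B$. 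Maximality of $U$ gives $\tilde U=U$, so $U+w$ is harmonic on $B$, i.e.\ $\ddc U=-\omega$ there.

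\textbf{H\"older regularity.} Let $\phi_\vep$ be a regularization of $\phi$ (convolution in finitely many charts, glued by a partition of unity) such that
$$|\phi-\phi_\vep|\le C\vep^\gamma\norm{\phi}_{\Cc^\gamma},\qquad \norm{\phi_\vep}_{\Cc^2}\le C\vep^{\gamma-2}\norm{\phi}_{\Cc^\gamma}.$$
Since the envelope is monotone and commutes with adding constants, $|U_{K,\phi}-U_{K,\phi_\vep}|\le C\vep^\gamma\norm\phi_{\Cc^\gamma}$. The key claim is then a Lipschitz bound of the form
$$\mathrm{Lip}(U_{K,\psi})\le C\big(\norm{\psi}_{\Cc^2}^{1/2}\norm\psi_{\Cc^0}^{1/2}+\norm{\psi}_{\Cc^1}+\norm\psi_{\Cc^0}\big),$$
or any bound that is $\le C\vep^{\gamma-1}$ when $\psi=\phi_\vep$. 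Granting this, for $\dist(x,y)=d$ I choose $\vep=d$ and obtain
$$|U(x)-U(y)|\le 2C\vep^\gamma+C\vep^{\gamma-1}d\le C' d^\gamma\norm\phi_{\Cc^\gamma},$$
which is the desired estimate, linear in $\norm\phi_{\Cc^\gamma}$.

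\textbf{Lipschitz bound for smooth weights, and the main obstacle.} To prove the Lipschitz bound for smooth $\psi$, I would split into three regions. (a) Near contact points inside $K$, use the local competitor $y\mapsto \psi(x)+\langle\nabla\psi(x),y-x\rangle-A|y-x|^2$, corrected by $w$ so that it is $\omega$-subharmonic where needed. This gives a lower bound $U(y)\ge U(x)-C\norm\psi_{\Cc^1}\dist(x,y)$ with quadratic error controlled by $\norm\psi_{\Cc^2}$; restricting to scale $\dist(x,y)\le\vep$ keeps that error $\lesssim\vep^\gamma$. (b) On $X\setminus K$, $U$ is $\omega$-harmonic by the first part, so interior gradient estimates apply away from $\partial K$. (c) Up to $\partial K$, I will use a barrier built from the $\Cc^2$ boundary, which provides uniform interior and exterior tangent discs. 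Combining the exterior Green function (or Poisson kernel) of $X\setminus K$ with a disc barrier shows that $U$ grows at most linearly away from $\partial K$, with constant $C\norm\psi_{\Cc^1}$. The maximum principle for $\omega$-harmonic functions on $X\setminus K$ then transfers the boundary modulus to the whole exterior.

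I expect step (c), the uniform Lipschitz control across $\partial K$ with constants depending only on $K$ and linearly on the weight, to be the main difficulty. The plan there is to first treat $\psi=0$, i.e.\ the extremal function $V_K=U_{K,0}$, which should be Lipschitz by the $\Cc^2$ exterior disc condition. I would then handle a general weight by comparing with $\psi(x_0)+C\norm\psi_{\Cc^1}V_K$-type barriers, localized at each boundary point $x_0$.
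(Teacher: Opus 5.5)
The paper does not prove this lemma; it quotes it from \cite[Theorem 2.7]{din-ma-ngu-ens}, so your argument has to stand on its own. Your balayage argument for $\ddc U_{K,\phi}=-\omega$ off $K$ and off the contact set is fine. Upper semicontinuity of $U$ already makes $\{U<\phi-\vep\}$ open, so no continuity is needed there. The H\"older part, however, breaks at step (c), which is not merely hard but false as formulated.

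Take $K$ whose boundary, in a holomorphic chart near a point, is the real axis with $K=\{\mathrm{Im}\,z\le 0\}$ (e.g.\ the complement of a coordinate disc). Let $w$ be a local potential of $\omega$, normalized so that $w(0)=0$ and $w\ge c|z|^2$. Let $\psi$ be a global $\omega$-subharmonic function with $\psi+w=c_0+\delta\,a(\mathrm{Re}\,z)$ near $0$, where $a$ is convex and $c_0,\delta>0$ are small (glue to $0$ by a regularized maximum).
\begin{itemize}
\item Since $\psi$ is itself a competitor, $U_{K,\psi}=\psi$ on $K$. Near $0$, $U_{K,\psi}+w$ is harmonic on $\{\mathrm{Im}\,z>0\}$ with boundary values $c_0+\delta a$. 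Up to a harmonic term vanishing on the real axis (smooth by reflection), it equals $c_0+\delta$ times the harmonic extension of $a$.
\item For $a(x)=|x|$, this extension is $x-\frac{2}{\pi}\mathrm{Im}(z\log z)$, which equals $\frac{2}{\pi}y\log(1/y)$ at $z=iy$. So $U_{K,\psi}$ is not Lipschitz although $\psi$ is; this is a counterexample to the lemma itself for $\gamma=1$.
\item For $a(x)=\sqrt{x^2+\vep^2}$, $\psi$ is smooth with $\norm{\psi}_{\Cc^1}=O(1)$ and $\norm{\psi}_{\Cc^2}=O(\vep^{-1})$. Yet $U_{K,\psi}(iy)-U_{K,\psi}(0)\ge c\,\delta\,y\log(1/\vep)-Cy$ for $0<y\le\vep$.
\end{itemize}
So no barrier can give linear growth off $\partial K$ with constant $C\norm{\psi}_{\Cc^1}$: the exterior Dirichlet problem loses a logarithm on Lipschitz data.

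Consequently your interpolation scheme cannot reach the exponent $\gamma$.
\begin{itemize}
\item For $\gamma=1$ it would need a uniform Lipschitz bound, which the first example rules out.
\item For $\gamma<1$, the natural Dini-type bound $\norm{\psi}_{\Cc^1}\log(2+\norm{\psi}_{\Cc^2}/\norm{\psi}_{\Cc^1})$, applied to $\phi_\vep$ with $\vep=d$, gives only the modulus $d^\gamma\log(1/d)$.
\item Your displayed bound is worse still: $\norm{\phi_\vep}_{\Cc^2}^{1/2}\norm{\phi_\vep}_{\Cc^0}^{1/2}\sim\vep^{\gamma/2-1}$, which yields only $\Cc^{\gamma/2}$.
\end{itemize}
The missing ingredient is a genuinely H\"older boundary estimate. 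For $\gamma<1$, the solution of $\ddc v=-\omega$ on the $\Cc^2$ domain $X\setminus K$ with $\Cc^\gamma$ boundary values is $\Cc^\gamma$ up to $\partial K$. This is a Privalov-type fact about the conjugate function, and it is exactly what fails at $\gamma=1$. It must be combined with H\"older control of $U_{K,\phi}$ on $K$, working at the H\"older level rather than through $\norm{\phi_\vep}_{\Cc^1}$.

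Two smaller points.
\begin{itemize}
\item In (a), the term $-A|y-x|^2$ with $A\sim\norm{\psi}_{\Cc^2}$ is not $\omega$-subharmonic once $A$ exceeds the density of $\omega$; adding $w$ contributes only bounded positivity. Moreover, gluing such a local competitor to $U$ presupposes the lower bound you are trying to prove.
\item No estimate here can be homogeneous in $\phi$. For $K\neq X$, $U_{K,0}=0$ on $K$ while $\ddc U_{K,0}=-\omega$ off $K$, so $U_{K,0}$ is non-constant. Hence both the lemma's inequality and your Lipschitz bound must take the form $c(1+\norm{\phi}_{\Cc^\gamma})$.
\end{itemize}
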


Thus, we can define the probability measure $$\nu_{K,\phi}:=\ddc U_{K,\phi}+\omega.$$ By Lemma \ref{lem-UK}, the support of $\nu_{K,\phi}$ is contained in $K$.

\begin{proposition}\label{sol-min}
	The minimizer of $\oI_\phi$ over $\cM(K)$ is the probability measure $\nu_{K,\phi}$. Moreover, one has
	$$ U^\star_{\nu_{K,\phi}}=\phi +\int_X \phi \,\dd\nu_{K,\phi}-\min_{\cM(K)}\oI_\phi \quad\text{on }\, \supp(\nu_{K,\phi}) $$
	and 
	$$U^\star_{\nu_{K,\phi}}=U_{K,\phi} +\int_X \phi \,\dd\nu_{K,\phi}-\min_{\cM(K)}\oI_\phi \quad\text{on }\, X.$$
\end{proposition}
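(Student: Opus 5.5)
The proof is a Frostman-type argument built around the envelope $U_{K,\phi}$: first identify the potential of $\nu:=\nu_{K,\phi}$, then deduce that $\nu$ minimizes $\oI_\phi$.

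\textbf{Step 1: the potential of $\nu$.} By Lemma \ref{lem-UK}, $U_{K,\phi}$ is continuous and $\omega$-subharmonic, and $\nu=\ddc U_{K,\phi}+\omega$. Both $U^\star_\nu$ and $U_{K,\phi}$ therefore solve $\ddc u=\nu-\omega$, so they differ by a constant. Write
$$U^\star_\nu=U_{K,\phi}+c .$$
We first need $U_{K,\phi}=\phi$ on $\supp(\nu)$. Lemma \ref{lem-UK} gives $\ddc U_{K,\phi}=-\omega$, i.e.\ $\nu=0$, on $X\setminus\{U_{K,\phi}=\phi\}$ and on $X\setminus K$. Hence $\supp(\nu)\subset\{U_{K,\phi}=\phi\}\cap K$, using that this set is closed because $U_{K,\phi}$ and $\phi$ are continuous. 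Moreover $U_{K,\phi}\le\phi$ on $K$: the envelope is continuous, so it coincides with its usc regularization and is itself a competitor in the sup.

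\textbf{Step 2: inequality for any competitor.} For $\mu\in\cM(K)$, set $\sigma:=\mu-\nu$, a signed measure of mass zero. Using bilinearity and the symmetry \eqref{commute-potential}, expand
$$\oI_\phi(\mu)=\oI_\phi(\nu)-\int U^\star_\sigma\,\dd\sigma-2\int U^\star_\nu\,\dd\sigma+2\int\phi\,\dd\sigma .$$
The middle terms combine as
$$2\int(\phi-U^\star_\nu)\,\dd\sigma=2\int(\phi-U_{K,\phi})\,\dd\mu-2\int(\phi-U_{K,\phi})\,\dd\nu-2c\,\sigma(X).$$
Here $\sigma(X)=0$, the $\nu$-integral vanishes by Step 1, and the $\mu$-integral is $\ge0$ because $\supp(\mu)\subset K$ where $U_{K,\phi}\le\phi$.

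For the remaining term we need $-\int U^\star_\sigma\,\dd\sigma\ge0$, i.e.\ \eqref{positive energy} extended to mass-zero signed measures. Since $\sigma$ has mass zero, $\ddc U^\star_\sigma=\sigma$. Stokes then gives
$$-\int U^\star_\sigma\,\dd\sigma=\int \dd U^\star_\sigma\wedge \dc U^\star_\sigma\ge0 .$$
This is justified for $\mu$ of finite energy; if $\mu$ has infinite energy then $\oI_\phi(\mu)=+\infty$ and there is nothing to prove. Hence $\oI_\phi(\mu)\ge\oI_\phi(\nu)$, so $\nu$ is a minimizer. Equality forces $\int U^\star_\sigma\,\dd\sigma=0$, hence $\dd U^\star_\sigma=0$, so $U^\star_\sigma$ is constant and $\sigma=\ddc U^\star_\sigma=0$. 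This gives uniqueness.

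\textbf{Step 3: identify the constant $c$.} Integrate $U^\star_\nu=U_{K,\phi}+c$ against $\nu$ and use $U_{K,\phi}=\phi$ on $\supp\nu$:
$$\int U^\star_\nu\,\dd\nu=\int\phi\,\dd\nu+c .$$
By the definition of $\oI_\phi$, $\int U^\star_\nu\,\dd\nu=2\int\phi\,\dd\nu-\min_{\cM(K)}\oI_\phi$. Comparing the two expressions,
$$c=\int\phi\,\dd\nu-\min_{\cM(K)}\oI_\phi .$$
This gives both displayed formulas: the one on $X$ directly, and the one on $\supp(\nu)$ after substituting $U_{K,\phi}=\phi$ there.

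\textbf{Main obstacle.} The delicate step is the energy positivity $-\int U^\star_\sigma\,\dd\sigma\ge0$ for signed measures, with the finite-energy justification of Stokes. I would handle it by regularizing $\mu$ (convolution in charts, or averaging with $\omega$) so that all potentials are smooth, and then passing to the limit using lower semicontinuity of the energy.
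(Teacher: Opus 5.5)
Your proof is correct. Your Step 3 is exactly the computation the paper writes out: it sets $U^\star_{\nu_{K,\phi}}-U_{K,\phi}=c$, uses $U_{K,\phi}=\phi$ on $\supp(\nu_{K,\phi})$ to get $\min_{\cM(K)}\oI_\phi=\int_X\phi\,\dd\nu_{K,\phi}-c$, and reads off both identities.

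The difference lies in the minimality and uniqueness of $\nu_{K,\phi}$. The paper does not prove this; it only says the classical Saff--Totik argument adapts to the surface. That argument obtains a minimizer abstractly by compactness and semicontinuity, gets uniqueness from positivity of energy, and derives Frostman-type inequalities. Concluding that this abstract minimizer equals $\ddc U_{K,\phi}+\omega$ still needs a further link to the envelope. You instead take the explicit candidate given by the envelope and check directly that
$$\oI_\phi(\mu)-\oI_\phi(\nu_{K,\phi})=-\int_X U^\star_\sigma\,\dd\sigma+2\int_X(\phi-U_{K,\phi})\,\dd\mu\ \ge 0,\qquad \sigma=\mu-\nu_{K,\phi}.$$
This removes both the existence step and the identification step, and gives uniqueness from the equality case.

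The price is that you rely fully on the regularity in Lemma \ref{lem-UK}: continuity of $U_{K,\phi}$, and $\supp(\nu_{K,\phi})\subset\{U_{K,\phi}=\phi\}\cap K$. The classical route works for general weighted compacta, with Frostman statements holding only quasi-everywhere. Under this paper's hypotheses ($\Cc^2$ boundary, H\"older weight) that regularity is available, so your route is shorter and self-contained.

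Two small remarks. First, $U_{K,\phi}\le\phi$ on $K$ follows at once from the definition of the pointwise supremum. Continuity is only needed to know that $U_{K,\phi}\in\SH(X,\omega)$.

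Second, your proposed fix for the \emph{main obstacle} points the wrong way. Lower semicontinuity of $\mu\mapsto-\int_X U^\star_\mu\,\dd\mu$ under weak convergence only gives $-\int U^\star_\sigma\,\dd\sigma\le\liminf_\epsilon\big(-\int U^\star_{\sigma_\epsilon}\,\dd\sigma_\epsilon\big)$. That is an upper bound, so it cannot transfer positivity from $\sigma_\epsilon$ to $\sigma$. You would need convergence of the energies along the regularization, for example a smoothing under which potentials decrease monotonically. It is simpler to cite \eqref{positive energy}: the paper states it for signed measures and applies it itself to the signed measure $\nu_{K,\phi}-\sigma_m$.
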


\begin{proof}
	There is no difficulty to adapt the proof of \cite[Page 27, Theorem 1.3]{log-book}, although the setting there is on $\C$. We only need to verify the identities in the lemma.  Note that $U^\star_{\nu_{K,\phi}}-U_{K,\phi}=c$ for some constant $c$. Using that $U_{K,\phi}=\phi$ on $\supp(\nu_{K,\phi})$, we get
	\begin{align*}
	\min_{\cM(K)}\oI_\phi&=\oI_\phi (\nu_{K,\phi})=-\int_X U^\star_{\nu_{K,\phi}} \,\dd\nu_{K,\phi} +2\int_X \phi \,\dd\nu_{K,\phi}  \\
	&=-\int_X U_{K,\phi} \,\dd\nu_{K,\phi} -c+2\int_X \phi \,\dd\nu_{K,\phi}\\
	&=-\int_X \phi \,\dd\nu_{K,\phi} -c+2\int_X \phi \,\dd\nu_{K,\phi}=\int_X \phi \,\dd\nu_{K,\phi}-c.
	\end{align*}
	
	Thus, on $\supp(\nu_{K,\phi})$, we have 
	$$U^\star_{\nu_{K,\phi}}=\phi+c =   \phi +\int_X \phi \,\dd\nu_{K,\phi}-\min_{\cM(K)}\oI_\phi,     $$
   giving the first equation. For the second one, we only need to use $U_{K,\phi}=\phi$ on $\supp(\nu_{K,\phi})$ one more time.
\end{proof}

The following result allows us to replace the weighted subset $(K,\phi,\mu)$ by $(X,U_{K,\phi},\mu)$ for the unit balls with respect to  $L^\infty (K,n\phi)$-norm   in $H^0(X,L^n)$.

\begin{lemma}\label{lem-K-X}
	For every section $s\in H^0(X,L^n)$, one has $\norm{s}_{L^\infty (K,n\phi)}=\norm{s}_{L^\infty (X,n U_{K,\phi})}$. In particular,
	$$ \oL_n(K,\phi)=\oL_n(X,U_{K,\phi}).   $$
\end{lemma}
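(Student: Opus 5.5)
We prove the two inequalities $\norm{s}_{L^\infty(K,n\phi)}\le\norm{s}_{L^\infty(X,nU_{K,\phi})}$ and $\norm{s}_{L^\infty(X,nU_{K,\phi})}\le\norm{s}_{L^\infty(K,n\phi)}$ separately. Equality of the norms then gives equality of the unit balls $\oB^\infty_n(K,\phi)=\oB^\infty_n(X,U_{K,\phi})$, hence of their volumes, and so $\oL_n(K,\phi)=\oL_n(X,U_{K,\phi})$ immediately.

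\textbf{First inequality.} We use that $U_{K,\phi}\le\phi$ on $K$. By Lemma \ref{lem-UK}, $U_{K,\phi}$ is continuous and $\omega$-subharmonic, so it is itself a competitor in the envelope. Alternatively, each competitor is $\le\phi$ on $K$ and $\phi$ is continuous, so the upper semicontinuous regularization is still $\le\phi$ on $K$. Hence for $x\in K$,
$$|s(x)|_h e^{-n\phi(x)}\le |s(x)|_h e^{-nU_{K,\phi}(x)}\le \norm{s}_{L^\infty(X,nU_{K,\phi})}.$$
Taking the supremum over $x\in K$ gives the inequality.

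\textbf{Second inequality.} Take $s\ne0$, normalize so that $\norm{s}_{L^\infty(K,n\phi)}=1$, and consider
$$\varphi:=\tfrac1n\log|s|_h.$$
Since $\ddc\log|s|_h=(s)-n\,c_1(L,h)=(s)-n\omega$ (using $\deg L=1$, so $c_1(L,h)=\omega$), we get $\ddc\varphi\ge-\omega$. Thus $\varphi\in\SH(X,\omega)$; it equals $-\infty$ only at the zeros of $s$, which is allowed.

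The normalization says $|s|_h\le e^{n\phi}$ on $K$, i.e.\ $\varphi\le\phi$ on $K$. So $\varphi$ is a competitor in the envelope, and therefore $\varphi\le U_{K,\phi}$ on all of $X$. This means $|s(x)|_h e^{-nU_{K,\phi}(x)}\le1$ for every $x\in X$, so $\norm{s}_{L^\infty(X,nU_{K,\phi})}\le1$. Undoing the normalization by homogeneity gives the second inequality.

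The only point needing care is that $\frac1n\log|s|_h$ is $\omega$-subharmonic. This rests on the curvature of $h$ being exactly $\omega$, which holds under the standing assumption $\deg L=1$ with $\omega=c_1(L,h)/\deg L$. In general one has $\ddc\tfrac1n\log|s|_h\ge-c_1(L,h)$, and the argument goes through verbatim with the envelope taken in $\SH(X,c_1(L,h))$.
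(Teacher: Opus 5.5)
Your proof is correct. The easy inequality $\norm{s}_{L^\infty(K,n\phi)}\le\norm{s}_{L^\infty(X,nU_{K,\phi})}$ is argued as in the paper. It only needs $U_{K,\phi}\le\phi$ on $K$, and this is immediate because the paper's $U_{K,\phi}$ is the pointwise supremum, with no regularization. Your ``alternative'' justification via continuity of $\phi$ would only control an upper semicontinuous regularization at interior points of $K$, not on $\partial K$.

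For the substantive inequality your route differs from the paper's. The paper sets $\Phi=\log|s|_h-nU_{K,\phi}$ and uses Lemma~\ref{lem-UK} to get $\ddc U_{K,\phi}=-\omega$ off the contact set $\{U_{K,\phi}=\phi\}\cap K$, so that $\Phi$ is subharmonic there. It then applies the maximum principle to see that $\sup_X\Phi$ is attained on the contact set, where $\Phi=\log|s|_h-n\phi$. You instead use the extremal property of the envelope directly: the normalized $\frac1n\log|s|_h$ is itself a competitor, and hence lies below $U_{K,\phi}$ everywhere.

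Your argument is shorter. It needs only the definition of $U_{K,\phi}$ together with $\ddc\log|s|_h\ge-n\omega$, which both proofs draw from $\deg L=1$. It uses neither the regularity of $U_{K,\phi}$, nor the description of $\ddc U_{K,\phi}$ off the contact set, nor a maximum principle on the open set $X\setminus(\{U_{K,\phi}=\phi\}\cap K)$. So it does not depend on the $\Cc^2$-boundary and H\"older hypotheses behind Lemma~\ref{lem-UK}. What the paper's route buys in exchange is the extra localization that the supremum of $|s|_{nU_{K,\phi}}$ over $X$ is attained on $\{U_{K,\phi}=\phi\}\cap K$.
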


\begin{proof}
	We only need to show the first assertion, as the second one is a consequence by definition of $\oL_n(K,\phi)$.   Consider the function    
	$$\Phi(x):= \log |s(x)|_{n U_{K,\phi}}=\log|s(x)|_h-nU_{K,\phi}(x).$$
	Note that $\ddc |s|_h\geq -n\omega$ on $X$.
	 Lemma \ref{lem-UK} gives $\ddc U_{K,\phi}=-\omega$ on $X\setminus \{U_{K,\phi}=\phi\}$ and $X\setminus K$. Thus, $\ddc \Phi \geq 0$ on  $X\setminus \{U_{K,\phi}=\phi\}$ and $X\setminus K$. By maximal modulus principle, the maximum of $\Phi$ appears on  $\{U_{K,\phi}=\phi\}\cap K$, which implies the maximum of $|s(x)|_{n U_{K,\phi}}$ appears on  $\{U_{K,\phi}=\phi\}\cap K$. We conclude that $\norm{s}_{L^\infty (X,n U_{K,\phi})}\leq \norm{s}_{L^\infty (K,n\phi)}$.
	 
	 On the other hand, by definition, $U_{K,\phi}\leq \phi$ on $K$. This implies $|s(x)|_{n \phi}\leq |s(x)|_{n U_{K,\phi}}$ for $x\in K$ and hence $\norm{s}_{L^\infty (K,n\phi)}\leq \norm{s}_{L^\infty (X,n U_{K,\phi})}$. The proof of the lemma is finished.
\end{proof}

The measure $\nu_{K,\phi}$ is called the \textit{equilibrium measure} of $(K,\phi,\mu)$. From now on until the end of the paper, we only consider the case when $K$ is the closure of a non-empty open subset with $\Cc^2$ boundary and $\phi$  is     $\gamma$-H\"older continuous.

\section{Bosonization formula}

To prove Theorems \ref{maintheorem1}, \ref{maintheorem2} and \ref{maintheorem3}, it is enough to fix a special  $(K_2,\phi_2,\mu_2)$, and use the triangular inequality to conclude. The simplest choice is  $$(K_2,\phi_2,\mu_2):=(X,\mathbf{0},\omega ),$$
which clearly satisfy the strong Bernstein-Markov condition and the mass-density condition.  Moreover, by \eqref{positive energy}, 
\begin{equation}\label{minI0}
    \min_{\cM(X)} \oI_{\mathbf 0} =   \min_{\nu\in\cM(X)} -\int_X U^\star_\nu \,\dd \nu  =-\int_X U^\star_{\omega } \,  \omega =-\int_X 0 \,  \omega =0.      \end{equation}

To simplify the notation, 
we
write $(K_1,\phi_1,\mu_1)$ as $(K,\phi,\mu)$. Our goal is to estimate the two differences 
$$  \oL_n(K,\phi)-\oL_n(X,\mathbf{0}) \quad\text{and}\quad   \oL_n(\mu,\phi)-\oL_n(\omega ,\mathbf{0}).$$

Recall the following identity in \cite[Lemma 5.3]{ber-bou-invent}.

\begin{lemma}\label{l2formula}
	If $S_n:=(s_1,s_2,\dots,s_{N_n})$ is a basis of $H^0 (X,L^n)$, then
	$$ \norm{\det S_n}_{L^2(\mu,n\phi)}^2 =N_n ! \det \big( \lp  s_j,s_k \rp_{L^2(\mu,n\phi)}  \big)_{j,k}.  $$
\end{lemma}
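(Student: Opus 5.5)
The plan is to expand the determinant multilinearly and integrate term by term, using Fubini and the fact that $\mu$ is a product measure on $X^{N_n}$. Write
$$\det S_n(x_1,\dots,x_{N_n})=\sum_{\sigma\in\mathfrak S_{N_n}}\operatorname{sgn}(\sigma)\prod_{\ell=1}^{N_n} s_{\sigma(\ell)}(x_\ell),$$
where each product is a section of $(L^n)^{\boxtimes N_n}$. The pointwise norm $|\det S_n|_h^2$ is the inner product of this section with itself for the metric $(h^n)^{\boxtimes N_n}$, and this metric is the tensor product of the metrics $h^n$ on the factors. So the integrand becomes
$$\sum_{\sigma,\tau}\operatorname{sgn}(\sigma)\operatorname{sgn}(\tau)\prod_{\ell=1}^{N_n}\big\langle s_{\sigma(\ell)}(x_\ell),s_{\tau(\ell)}(x_\ell)\big\rangle_{h^n} e^{-2n\phi(x_\ell)}.$$

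Next I would integrate against $\dd\mu(x_1)\cdots\dd\mu(x_{N_n})$. Each product factorises into one-variable integrals, so the $(\sigma,\tau)$ term equals
$$\operatorname{sgn}(\sigma\tau)\prod_{\ell}\lp s_{\sigma(\ell)},s_{\tau(\ell)}\rp_{L^2(\mu,n\phi)}.$$
Reindex with $j=\sigma(\ell)$ and set $\pi=\tau\circ\sigma^{-1}$. The term becomes $\operatorname{sgn}(\pi)\prod_j\lp s_j,s_{\pi(j)}\rp$, which depends only on $\pi$. For each $\pi$ there are exactly $N_n!$ pairs $(\sigma,\tau)$ giving it. Summing over $\pi$ yields $N_n!\det\big(\lp s_j,s_k\rp\big)_{j,k}$ by the Leibniz formula. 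Note that $\operatorname{sgn}(\sigma)\operatorname{sgn}(\tau)=\operatorname{sgn}(\pi)$.

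The only points needing care are two. First, the identification of the box-product metric with the product of the fibre inner products; this is immediate from how the metric on $(L^n)^{\boxtimes N_n}$ is induced by $h$. Second, justifying Fubini. This is harmless because everything is continuous and bounded on the compact $X$, and $\mu$ is a probability measure.

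There is no real obstacle here; the main thing to watch is the bookkeeping of the counting factor $N_n!$ and of the signs.
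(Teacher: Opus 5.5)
Your proof is correct: the Leibniz expansion, the factorisation of the $(h^n)^{\boxtimes N_n}$ inner product and of the weight $e^{-2n\sum_\ell\phi(x_\ell)}$, the substitution $\pi=\tau\circ\sigma^{-1}$ with $\operatorname{sgn}(\sigma)\operatorname{sgn}(\tau)=\operatorname{sgn}(\pi)$, and the count of $N_n!$ pairs $(\sigma,\tau)$ for each $\pi$ are all right. The paper gives no proof of its own and simply quotes the identity from Berman--Boucksom \cite[Lemma 5.3]{ber-bou-invent}; your expansion-plus-Fubini computation is the standard argument for this identity. An equivalent route is to first reduce to an orthonormal basis, since both sides are multiplied by $|\det A|^2$ under $S_n\mapsto AS_n$; then only the terms with $\sigma=\tau$ survive.
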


In particular, when $S_n$ is orthonormal, the right hand side in the equation above is $N_n!$.
From now on until the end of the paper, we fix $S_n$ as an $L^2(\omega ,n\mathbf{0})$-orthonormal basis of $H^0(X,L^n)$.
By Lemma \ref{l2formula},
\begin{equation} \label{key-equation}
\oL_n(\omega ,\mathbf{0})-\oL_n(\mu,\phi)={1\over n N_n}\log  \norm{\det S_n}_{L^2(\mu,n\phi)}^2 -{1\over n N_n} \log(N_n !),
\end{equation}
We will use the following \textit{bosonization formula} to estimate $\norm{\det S_n}_{L^2(\mu,n\phi)}^2$.

\begin{lemma}\label{boson}
	For all $(x_1,\dots,x_{N_n})\in X^{N_n}$,
	$$
	\big|\det S_n(x_1,\dots,x_{N_n})\big|_h^2 =\mathcal Z_n e^{\sum_{1\leq \ell\leq N_n}\lambda(x_\ell)} e^{\sum_{j\neq k} G(x_j,x_k)}   \Big\|\widetilde\theta\big(  \oA_{n}(L^{n})-\sum_{1\leq j\leq N_n}\oA_1(x_j) -\mathbf z_\star \big)\Big\|_{\fh_J}^2, 
	$$
	where $\mathcal Z_n>0$ is constant independent of $x_1,\dots,x_{N_n}$ and $\lambda$ is a smooth function   independent of $n,x_1,\dots,x_{N_n}$.
\end{lemma}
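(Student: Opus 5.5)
The plan is to treat both sides as functions of the $N_n$ points and show that their ratio is a constant. On the left, $F(x_1,\dots,x_{N_n}):=\big|\det S_n(x_1,\dots,x_{N_n})\big|_h^2$. On the right, $R(x_1,\dots,x_{N_n})$ denotes the product $e^{\sum_\ell\lambda(x_\ell)}e^{\sum_{j\neq k}G(x_j,x_k)}\|\widetilde\theta(\cdots)\|_{\fh_J}^2$. Since $\det S_n$ is antisymmetric and $R$ is symmetric, it suffices to study $\log F-\log R$ one variable at a time: fix $x_2,\dots,x_{N_n}$ in general position and vary $x_1=x$.

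\emph{Step 1: the zeros in $x$.} As a function of $x$, $\det S_n(x,x_2,\dots)$ is (up to a constant) the unique section $s\in H^0(X,L^n)$ vanishing at $x_2,\dots,x_{N_n}$. For generic points the conditions are independent, since $N_n-1=n-g$ and $h^0(L^n(-x_2-\cdots-x_{N_n}))=1$. This section has $n$ zeros: the $N_n-1$ prescribed points, plus $g$ further points $q_1,\dots,q_g$ determined by
\[
\oA_g(q_1,\dots,q_g)=\oA_n(L^n)-\sum_{j\geq2}\oA_1(x_j).
\]
By Riemann's theorem on the theta divisor (the statement $W_{g-1}=\Theta+\mathbf z_\star$ recalled above, in the classical form that $x\mapsto\widetilde\theta(e-\oA_1(x)-\mathbf z_\star)$ vanishes exactly at the $g$ points of the divisor with image $e$, up to the sign convention for $\mathbf z_\star$), the function
\[
x\mapsto\widetilde\theta\Big(\oA_n(L^n)-\oA_1(x)-\sum_{j\geq2}\oA_1(x_j)-\mathbf z_\star\Big)
\]
vanishes exactly at $q_1,\dots,q_g$. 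Meanwhile $e^{\sum_{j\neq k}G}$ contains $e^{2\sum_{j\geq2}G(x,x_j)}$, and $G(x,x_j)\sim\log\dist(x,x_j)$ by Lemma \ref{l:Green}, so this factor vanishes to order exactly $2$ at each $x_j$. Hence $\log F-\log R$ has no logarithmic singularities in $x$, i.e.\ it is locally bounded near zeros.

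\emph{Step 2: curvature matching.} Compute $\ddc_x$ of both logarithms. For the left side, $\ddc\log|s|_h^2=2\sum(\text{zeros})-2n\omega$. For the right side, the $G$-term contributes $2\sum_{j\geq2}(\delta_{x_j}-\omega)$, and the theta term contributes $2\sum_i\delta_{q_i}$ minus the pullback of $\omega_J$ under $x\mapsto-\oA_1(x)$. Because $\fh_J$ is $\omega_J$-admissible and $\oA^*\omega_J=g\,\omega_{\rm can}$ (the canonical/Bergman form, which is the content of the Riemann bilinear relations), this pullback is $2g\,\omega_{\rm can}$. Comparing the two sides,
\[
\ddc(\log F-\log R)=-2n\omega+2(N_n-1)\omega+2g\,\omega_{\rm can}-\ddc\lambda .
\]
Using $N_n-1=n-g$, this becomes $-2g\omega+2g\,\omega_{\rm can}-\ddc\lambda$. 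Since $\omega$ and $\omega_{\rm can}$ both have mass one, we can choose $\lambda$ smooth, depending only on $\omega$ and the curve, with $\ddc\lambda=2g(\omega_{\rm can}-\omega)$. With this choice $\log F-\log R$ is harmonic in $x$ on the compact surface $X$, hence constant in $x$.

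\emph{Step 3: conclusion and the expected obstacle.} By symmetry, the ratio $F/R$ is constant in each variable separately, hence constant on a dense open set, and by continuity on all of $X^{N_n}$ (both sides vanish on the exceptional set). Call this constant $\mathcal Z_n$. It is positive because $F\not\equiv0$: the $s_j$ form a basis. The main obstacle is Step 1's use of Riemann's vanishing theorem with the correct sign and base-point conventions for $\mathbf z_\star$, together with the degenerate configurations where $h^0\geq2$ (the preimage of $W^1_g$). I would handle these by working on the dense open set where the divisor is uniquely determined and extending by continuity. A secondary check is the identity $\oA^*\omega_J=g\,\omega_{\rm can}$ with the normalization \eqref{defn-j}; if that normalization is off, the discrepancy is still a smooth form of the right mass and is absorbed into $\lambda$.
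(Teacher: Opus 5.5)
Your argument is correct, but it proves the lemma by a different route from the paper. The paper does not prove the identity itself. It imports the bosonization formula \eqref{boark} for the Arakelov form $\omega_A$ (with $\lambda=0$) from \cite{kle-cmp} and related works. It then transfers that formula to an arbitrary smooth $\omega$ by the change of reference form $\ddc\varphi=\omega-\omega_A$, using $G_A(x,y)=G(x,y)+\varphi(x)+\varphi(y)$ and $|s_j|_h=c_\varphi e^{-n\varphi}|s_j|_{\fh_A}$.

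You instead prove the needed statement directly for any $\omega$, by the classical Liouville argument:
\begin{itemize}
\item In each variable both sides have the same zero divisor. Riemann's vanishing theorem matches the $g$ residual zeros of $\det S_n(\cdot,x_2,\dots)$ with the zeros of the theta factor, and $e^{2\sum_{k\geq 2}G(\cdot,x_k)}$ matches the prescribed zeros.
\item Once $\ddc\lambda=2g(\omega_{\rm can}-\omega)$, both sides have the same curvature.
\item So the ratio is harmonic on $X$, hence constant.
\end{itemize}

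The paper's route is shorter, and through the literature it in principle identifies the constant. Yours is self-contained, does not depend on the normalization conventions of the cited formula, and gives exactly what is used later: the existence of some $\mathcal Z_n>0$. Proposition~\ref{prop-Zn} estimates $\log\mathcal Z_n$ only through \eqref{Zn0}. Your route also shows why $\lambda$ is independent of $n$: the cancellation $-2n+2(N_n-1)=-2g$ uses exactly $N_n=n-g+1$.

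This bookkeeping also exposes a slip in the paper's final line. Since $\sum_{j\neq k}(\varphi(x_j)+\varphi(x_k))=2(N_n-1)\sum_\ell\varphi(x_\ell)$, the exponent there is $-2g\sum_\ell\varphi(x_\ell)$. So one should take $\lambda=-2g\varphi$, not $-2\varphi$. This agrees with your $\ddc\lambda=2g(\omega_A-\omega)$ and is harmless for the statement.

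To make Step~3 airtight, work on
$U:=\{\mathbf x\in X^{N_n}:\ \text{the } x_j \text{ are pairwise distinct and the theta factor is nonzero}\}$.
\begin{itemize}
\item For distinct points, $F(\mathbf x)=0$ iff $L^n(-x_1-\cdots-x_{N_n})$ is effective, iff $\oA_n(L^n)-\sum_j\oA_1(x_j)\in W_{g-1}=\Theta+\mathbf z_\star$. Hence $F>0$ on $U$.
\item At every $\mathbf x\in U$, any $N_n-1$ of the points impose independent conditions. So your one-variable argument applies there and gives $\partial_{x_j}\log(F/R)=0$ for all $j$.
\item $U$ is the complement of a proper analytic subset, so it is connected and dense. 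Thus $F/R$ is constant on $U$, and $F=\mathcal Z_n R$ on $X^{N_n}$ by continuity.
\end{itemize}

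One small correction to the wording in Step~2: the theta factor contributes $2\sum_i\delta_{q_i}-2\oA_1^*\omega_J$, using that $\omega_J$ is invariant under $z\mapsto -z$. The factor $2$ in $2g\,\omega_{\rm can}$ comes from the square, not from the pullback itself.
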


\begin{proof}
	By \cite[equation (3.27)]{kle-cmp} (see also \cite[equation (5.4)]{MR0908551} and \cite[Theorem 5]{zel-imrn}), Lemma \ref{boson} holds for the \textit{Arakelov measure} $\omega_A$ with $\lambda=\mathbf 0$. More precisely,
	\begin{equation}\label{boark}
	\big|\det S_n(x_1,\dots,x_{N_n})\big|_{\fh_{A}}^2 =\mathcal Z'_n  e^{\sum_{j\neq k} G_A(x_j,x_k)}   \Big\|\widetilde\theta\big(  \oA_{n}(L^{n})-\sum_{1\leq j\leq N_n}\oA_1(x_j) -\mathbf z_\star \big)\Big\|_{\fh_J}^2,
	\end{equation}
	where $\fh_A$ is the \textit{Arakelov metric} induced by $\omega_A$, $\mathcal Z'_n>0$ is  a constant and $G_A$ is the   Green function on $X\times X$ associated to $\omega_A$. The Arakelov measure $\omega_A$ is the smooth volume form on $X$, defined by the pull-back of the $(1,1)$-form $\omega_J$ \eqref{defn-j} on $\mathrm{Jac}(X)$ by $\oA_1$ after normalization. For the proof here, we only need to use that $\omega_A$ is a   smooth volume form of integral $1$.
	
	Consider  the function
	$$\varphi(x):=-\int_X  G(x,y)\,\omega_A(y)+{1\over 2}\int_{X\times X} G(z,y)\, \omega_A(z)\omega_A(y).$$
	It is not hard to see that $\varphi$ is smooth, 
	\begin{equation}\label{arkgreen}
	\ddc \varphi = \omega-\omega_A \quad\text{and}\quad G_A(x,y)=G(x,y)+\varphi(x)+\varphi(y)  
	\end{equation}
	by checking the two conditions in Lemma \ref{l:Green} for the Green function.
	
	On the other hand,  for each $s_j$, since it is a section of $L^n$, by the first equation in \eqref{arkgreen}, we have
	$$ |s_j(x)|_h =c_\varphi e^{-n\varphi(x)} |s_j(x)|_{\fh_A}  \quad\text{for some constant }\, c_\varphi>0.$$
	By definition, $\det S_n$ is a linear combination of the $N_n$ terms product  $\prod s_j (x_\ell)$, where  each $s_j$ and $x_\ell$ appear exactly once. It follows that 
	\begin{equation}\label{arksection}
	\big|\det S_n(x_1,\dots,x_{N_n})\big|_{h}=c_\varphi^{N_n} e^{-n\sum_{1\leq \ell\leq N_n}\varphi(x_\ell)}\big|\det S_n(x_1,\dots,x_{N_n})\big|_{\fh_{A}}.
	\end{equation}
	
	Now we substitute \eqref{arkgreen} and \eqref{arksection} into \eqref{boark}, getting
	\begin{align*}
	&\big|\det S_n(x_1,\dots,x_{N_n})\big|_h^2 \\
	&= c_\varphi^{2N_n}\mathcal Z'_n  e^{\sum_{j\neq k} G_A(x_j,x_k)-2n\sum_{1\leq \ell\leq N_n}\varphi(x_\ell)}\Big\|\widetilde\theta\big(  \oA_{n}(L^{n})-\sum_{1\leq j\leq N_n}\oA_1(x_j) -\mathbf z_\star \big)\Big\|_{\fh_J}^2\\
	&= c_\varphi^{2N_n}\mathcal Z'_n  e^{\sum_{j\neq k} G(x_j,x_k)-2\sum_{1\leq \ell\leq N_n}\varphi(x_\ell)}\Big\|\widetilde\theta\big(  \oA_{n}(L^{n})-\sum_{1\leq j\leq N_n}\oA_1(x_j) -\mathbf z_\star \big)\Big\|_{\fh_J}^2.
	\end{align*}
	We finish the proof of the proposition by letting $\mathcal Z_n:=c_\varphi^{2N_n}\mathcal Z'_n$ and $\lambda:=-2\varphi$.
\end{proof}

By Lemma \ref{boson} and definition, the  quantity $\norm{\det S_n}_{L^2(\mu,n\phi)}^2$ in \eqref{key-equation} can be written as
\begin{align*}
\norm{\det S_n}_{L^2(\mu,n\phi)}^2 =\int_{K^{N_n}} &\mathcal Z_n e^{\sum_{1\leq \ell\leq N_n}\lambda(x_\ell)} e^{\sum_{j\neq k} G(x_j,x_k)-2n\sum_{1\leq j\leq N_n} \phi(x_j)}  \\
& \Big\|\widetilde\theta\big(  \oA_{n}(L^{n})-\sum_{1\leq j\leq N_n}\oA_1(x_j) -\mathbf z_\star \big)\Big\|_{\fh_J}^2 \,  \dd \mu^{N_n}(x_1,\dots,x_{N_n}).   
\end{align*}
The dominated term inside the integral is
\begin{equation}\label{e-e}
e^{\sum_{j\neq k} G(x_j,x_k)-2n\sum_{1\leq j\leq N_n} \phi(x_j)}  = \exp \left[ N_n^2  \oE_{N_n}(\mathbf x) - 2nN_n\int_X \phi \,\dd \delta_{\mathbf x}      \right]    
\end{equation}
where $\mathbf x:=(x_1,\dots,x_{N_n})$.  The main ingredient of this article is to estimate last quantity. For this aim, we consider the following crucial \textit{discrete energy functional}
$$ \oJ_{m,\phi} (\mathbf p) := -\oE_m(\mathbf p) +2\int _X  \phi \,\dd\delta_{\mathbf p}, \quad  \mathbf p\in X^m,\quad m\in\N.         $$ 
Observe that
\begin{equation*} \label{diff-e-j}
\Big|N_n^2  \oE_{N_n}(\mathbf x) - 2nN_n\int_X \phi \,\dd \delta_{\mathbf x} + N_n^2 \oJ_{N_n,\phi} (\mathbf x)  \Big| = 2N_n(g-1) \Big| \int_X \phi \,\dd \delta_{\mathbf x}\Big| = O(N_n).  
\end{equation*}
Thus, to estimate \eqref{e-e}, we need to analyze the functional $\oJ_{N_n,\phi}$.

\section{Lower bound of $\oJ_{m,\phi}$}\label{sec-com-1}

The new defining functional $\oJ_{m,\phi}$ can be seen as the discrete version of $\oI_\phi$. We will show that their minimal values only differ  by an error  $O(m^{-1}\log m)$. In this section, we give a lower bound of $\oJ_{m,\phi}$.

\begin{proposition}\label{prop-upper-functional}
	There exists a constant $C>0$ independent of $\phi$ and $m$, such that
	$$\inf_{\mathbf p\in K^m}  \oJ_{m,\phi}  \geq  \min_{\cM(K)} \oI_\phi  -C(1+\norm{\phi}_{\Cc^\gamma}) {\log m\over m}  \quad\text{for all}\quad m>1.$$
\end{proposition}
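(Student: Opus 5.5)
The plan is to pass from the discrete point configuration to a genuine probability measure by smearing each point over a tiny circle, and then compare with the continuous energy. The smearing costs only $O(m^{-1}\log m)$ in the self-interaction terms and a small Hölder/Lipschitz error elsewhere.

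\emph{Step 1 (reduction to $U_{K,\phi}$).} For $\mathbf p\in K^m$ we have $\phi\geq U_{K,\phi}$ on $K$, hence
$$\oJ_{m,\phi}(\mathbf p)\geq -\oE_m(\mathbf p)+2\int_X U_{K,\phi}\,\dd\delta_{\mathbf p}.$$
We then show that for \emph{every} probability measure $\nu$ on $X$, not necessarily supported on $K$,
$$\oI'(\nu):=-\int_X U^\star_\nu\,\dd\nu+2\int_X U_{K,\phi}\,\dd\nu\;\geq\;\min_{\cM(K)}\oI_\phi .$$
Write $\nu_{eq}:=\nu_{K,\phi}$ and $c:=\int_X\phi\,\dd\nu_{eq}-\min_{\cM(K)}\oI_\phi$. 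By Proposition \ref{sol-min}, $U_{K,\phi}=U^\star_{\nu_{eq}}-c$ on $X$. Using the symmetry \eqref{commute-potential}, one obtains
$$-\int_X U^\star_\nu\,\dd\nu+2\int_X U^\star_{\nu_{eq}}\,\dd\nu=-\int_X U^\star_{\nu-\nu_{eq}}\,\dd(\nu-\nu_{eq})+\int_X U^\star_{\nu_{eq}}\,\dd\nu_{eq}.$$
The first term on the right is $\geq 0$ by \eqref{positive energy}, applied to the signed measure $\nu-\nu_{eq}$. On $\supp(\nu_{eq})$ we have $U_{K,\phi}=\phi$, so $\int_X U^\star_{\nu_{eq}}\,\dd\nu_{eq}=\int_X\phi\,\dd\nu_{eq}+c$. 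Combining,
$$\oI'(\nu)\geq \int_X\phi\,\dd\nu_{eq}+c-2c=\min_{\cM(K)}\oI_\phi .$$

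\emph{Step 2 (smearing).} Fix $r:=m^{-1/\gamma}$. Cover $X$ by finitely many holomorphic charts in which $G(x,y)=\log|x-y|+$ (smooth function) near the diagonal; this follows from Lemma \ref{l:Green} together with $\ddc G(x,\cdot)=\delta_x-\omega$. Let $\sigma_j$ be the normalized arc-length measure on the coordinate circle of radius $r$ centered at $p_j$, and set $\nu:=\frac1m\sum_j\sigma_j$. We need three estimates.
\begin{enumerate}[label=(\alph*)]
\item For $j\neq k$:
$$\iint G\,\dd\sigma_j\,\dd\sigma_k\geq G(p_j,p_k)-Cr .$$
This holds because $\log|x-y|$ is subharmonic in each variable, so its circle averages dominate its value at the centers. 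The smooth correction, and the case of far-apart points where $G$ is smooth and uniformly Lipschitz, cost only $O(r)$.
\item For the diagonal terms:
$$\iint G\,\dd\sigma_j\,\dd\sigma_j=\log r+O(1).$$
\item For the potential term, by the $\gamma$-Hölder bound of Lemma \ref{lem-UK}:
$$\Big|\int_X U_{K,\phi}\,\dd\nu-\int_X U_{K,\phi}\,\dd\delta_{\mathbf p}\Big|\leq C\norm{\phi}_{\Cc^\gamma}r^\gamma .$$
\end{enumerate}
Summing (a) and (b),
$$-\int_X U^\star_\nu\,\dd\nu\leq-\oE_m(\mathbf p)+\frac{\log(1/r)+C}{m}+Cr=-\oE_m(\mathbf p)+O\Big(\frac{\log m}{\gamma m}\Big).$$
With (c) and $r^\gamma=m^{-1}$, this gives
$$\oJ_{m,\phi}(\mathbf p)\geq \oI'(\nu)-C(1+\norm{\phi}_{\Cc^\gamma})\frac{\log m}{m}.$$
Step 1 then yields the proposition, uniformly in $\mathbf p\in K^m$. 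If some $p_j=p_k$, then $\oE_m(\mathbf p)=-\infty$ and there is nothing to prove.

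\emph{Main obstacle.} The delicate point is (a): the sub-mean-value inequality must be applied to the genuine Green function, not a model kernel. This requires choosing the charts and the smearing measures so that the non-harmonic part of $G$ near the diagonal is smooth, and so that pairs at intermediate distance, possibly lying in different charts, are handled by the Lipschitz bound on $\varrho$ from Lemma \ref{l:Green}. The choice $r=m^{-1/\gamma}$, rather than $r=1/m$, is what prevents the Hölder error from dominating when $\gamma<1$. The price is only the factor $1/\gamma$ in the $\log m/m$ term, and the constant $C$ is allowed to depend on $\gamma$.
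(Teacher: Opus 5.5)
Your proof is correct, but it is organized differently from the paper's in one substantive respect.

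\textbf{Where the routes differ.} The paper keeps the weight $\phi$ and smears each $p_j$ by the $\omega$-balayage $\sigma_{p_j}=\ddc V_{p_j}+\omega$ onto the geodesic circle $\partial\B(p_j,m^{-1/\gamma})$. It then observes that $\sigma_{\mathbf p}$ lives on the enlarged set $K_m$ and bounds $\oI_\phi(\sigma_{\mathbf p})\geq\min_{\cM(K_m)}\oI_\phi$. This forces the separate perturbation Lemma \ref{lem-Km}, comparing the equilibrium problems on $K_m$ and on $K$. That lemma tacitly uses the equilibrium theory on $K_m$, a set whose boundary need not be $\Cc^2$.

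\textbf{What your Step 1 buys.} It removes that detour. You replace $\phi$ by the global obstacle $U_{K,\phi}$ and apply \eqref{positive energy} to $\nu-\nu_{K,\phi}$. This shows that $\nu_{K,\phi}$ minimizes the $U_{K,\phi}$-weighted energy over all of $\cM(X)$, so leakage of the smeared measure out of $K$ costs nothing. You only need Proposition \ref{sol-min} and Lemma \ref{lem-UK} for $K$ itself. It is the same positivity trick the paper uses later for Proposition \ref{prop-lower}. The paper's version has the minor merit of never leaving the functional $\oI_\phi$.

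\textbf{The smearing step.} Here both proofs use the same sub-mean-value idea. The paper's intrinsic envelope $V_{p_j}$ avoids charts, whereas your coordinate circles make the diagonal term $\log r+O(1)$ explicit.

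\textbf{Removing your main obstacle in (a).} The chart bookkeeping you flag disappears if you use $\ddc_x G(x,y)=\delta_y-\omega\geq-\omega$ directly. Let $a$ be a smooth local potential of $\omega$ in the chart defining $\sigma_j$. Then $G(\cdot,y)+a$ is subharmonic there for every $y$, so $\int G(x,y)\,\dd\sigma_j(x)\geq G(p_j,y)-Cr^2$ uniformly in $y\in X$. Applying this once in each variable gives (a) for all pairs $j\neq k$ at once, including overlapping circles and circles defined in different charts. The local form of $G$ near the diagonal is then needed only for the lower bound in (b).
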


We need to show that for all $\mathbf{p} =( p_1 ,\dots , p_m )\in K^m$, 
\begin{equation}\label{ineq-mu-p}
\oE_m(\mathbf{p}) -  2\int _X  \phi \,\dd\delta_{\mathbf p} \leq - \min_{\cM(K)} \oI_\phi + C(1+\norm{\phi}_{\Cc^\gamma})  \frac{\log m}{m}.
\end{equation}
This immediately implies Proposition~\ref{prop-upper-functional}. Without loss of generality, we may assume that the points $p_1, p_2, \dots, p_m$ are all distinct, as otherwise $\oE_m(\mathbf{p}) =-\infty$ and the inequality holds trivially. Under this assumption, $U^\star_{\delta_{p_j}}(p_k)$ is finite for $j\neq k$.

\medskip

For each $1\leq j\leq m$, consider the function
\[
V_{p_j} := \sup_\varphi  \big\{  \varphi\in \SH(X,\omega): \; \varphi \leq U^\star_{\delta_{p_j}} \,\text{ on }\, \partial  \B(p_j, m^{-1/\gamma}) \big\}.
\]
By Lemma \ref{lem-upenvop} (apply that lemma with $u=U^\star_{\delta_{p_j}}-C$ for large $C$), $V_{p_j}$ is a continuous $\omega$-subharmonic function satisfying:
$$
V_{p_j}  = U^\star_{\delta_{p_j}} \, \text{ on }\, X \setminus \B(p_j, m^{-1/\gamma}) \quad\text{and}\quad
V_{p_j}  > U^\star_{\delta_{p_j}} \, \text{ on }\, \B(p_j, m^{-1/\gamma}), $$ 
$$
\ddc V_{p_j} = -\omega \,\text{ on }\, X \setminus \partial \B(p_j, m^{-1/\gamma}).
$$
Define the probability measure $$\sigma_{p_j} := \ddc V_{p_j} + \omega,$$ which is supported on $\partial \B(p_j, m^{-1/\gamma})$. Finally, we define their average 
$$    \sigma_{\mathbf p}:={1\over m}\sum_{1\leq j\leq m} \sigma _{p_j} .    $$

\begin{lemma}\label{lem-delta-sigma}
	One has  $|V_{p_j} -U^\star_{\sigma_{p_j}} |\leq C m^{-2}\log m$  for some $C>0$ independent of $m$ and ${p_j}$. In particular, $|U^\star_{\sigma_{p_j}}- U^\star_{\delta_{p_j}}|\leq Cm^{-2}\log m$ on $X\setminus \B(p_j, m^{-1/\gamma})$.
\end{lemma}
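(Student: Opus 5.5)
Write $r:=m^{-1/\gamma}$ and $B:=\B(p_j,r)$. The plan is to show that $V_{p_j}$ and $U^\star_{\sigma_{p_j}}$ differ only by a constant, and then to bound that constant using the two normalizations.

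\textbf{Step 1: the difference is constant.} Since $\sigma_{p_j}=\ddc V_{p_j}+\omega$, we have $\ddc(V_{p_j}-U^\star_{\sigma_{p_j}})=0$ on $X$. Both functions are continuous $\omega$-subharmonic functions, so their difference is a harmonic function on the compact surface $X$. Hence
$$V_{p_j}=U^\star_{\sigma_{p_j}}+c_j$$
for some constant $c_j$, and it remains to show $|c_j|\le Cm^{-2}\log m$.

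\textbf{Step 2: identify the constant.} Integrate against $\omega$. The mean normalization gives $\int_X U^\star_{\sigma_{p_j}}\,\omega=0$, and $\int_X U^\star_{\delta_{p_j}}\,\omega=0$ by Lemma \ref{l:Green}. Since $V_{p_j}=U^\star_{\delta_{p_j}}$ outside $B$, this yields
$$c_j=\int_X V_{p_j}\,\omega=\int_X\big(V_{p_j}-U^\star_{\delta_{p_j}}\big)\,\omega=\int_{B}\big(V_{p_j}-U^\star_{\delta_{p_j}}\big)\,\omega\ \ge 0 .$$

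\textbf{Step 3: estimate the integral over the small ball.}
\begin{itemize}
\item By Lemma \ref{l:Green}, $U^\star_{\delta_{p_j}}(x)=G(p_j,x)=\log\dist(p_j,x)+O(1)$. So on $B$ we get $U^\star_{\delta_{p_j}}\ge \log\dist(p_j,\cdot)-C$, and on $\partial B$ we get $U^\star_{\delta_{p_j}}\le \log r+C$.
\item For the upper bound on $V_{p_j}$ inside $B$, use $\ddc V_{p_j}=-\omega$ in $B$. In a local chart, $V_{p_j}+\psi$ is harmonic on $B$, where $\psi$ is a smooth local potential of $\omega$. By the maximum principle, $V_{p_j}\le \log r+C$ on $B$.
\item The key integral is
$$\int_{B}\big(\log r-\log\dist(p_j,x)\big)\,\omega(x)=O(r^2),$$
which follows from the elementary computation $\int_0^r(\log r-\log t)\,t\,dt=r^2/4$. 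The remaining part of the integrand is bounded by a constant on $B$, so it contributes at most $O(\vol(B))=O(r^2)$.
\end{itemize}
Altogether $c_j=O(r^2)=O(m^{-2/\gamma})\le O(m^{-2})$, since $\gamma\le1$. This is even better than $m^{-2}\log m$; the $\log m$ is harmless slack. The constants depend only on the Lipschitz bound for $\varrho$ in Lemma \ref{l:Green} and on the geometry of $(X,\omega)$, uniformly in $p_j$.

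\textbf{Step 4: the second claim.} On $X\setminus B$ we have $V_{p_j}=U^\star_{\delta_{p_j}}$, so the second assertion follows immediately from the first.

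The main obstacle is Step 3, the uniform upper bound for $V_{p_j}$ on $B$. It needs the boundary values $\log r+O(1)$ uniformly in $p_j$ and $m$, and a maximum principle for the equation $\ddc V=-\omega$. The potential $\psi$ has oscillation $O(r^2)$ on $B$, so absorbing it only costs a bounded amount and the argument goes through.
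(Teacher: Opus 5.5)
Your proof is correct and follows the paper's route: both identify the constant as $\int_X V_{p_j}\,\omega=\int_{\B(p_j,m^{-1/\gamma})}\big(V_{p_j}-U^\star_{\delta_{p_j}}\big)\,\omega$ and bound it using the maximum principle for $\ddc V_{p_j}=-\omega$ on the ball together with $G=\log\dist+O(1)$. The only difference is that you compare both functions to $\log r$ and exploit the cancellation, whereas the paper bounds $|V_{p_j}|$ and $|U^\star_{\delta_{p_j}}|$ separately; this removes the $\log m$ and gives $O(m^{-2/\gamma})$ instead of the paper's $O(m^{-2/\gamma}\log m)$.
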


\begin{proof}
	By the definition of the mean normalized $\omega$-potential, we have $V_{p_j} - U^\star_{\sigma_{p_j}} = \int_X V_{p_j} \, \omega$. Using the fact that $V_{p_j} = U^\star_{\delta_{p_j}}$ on $X \setminus \B(p_j, m^{-1/\gamma})$, we obtain
	\begin{align*}
	\int_X V_{p_j} \, \omega &= \int_{\B(p_j, m^{-1/\gamma})} V_{p_j} \, \omega + \int_{X \setminus \B(p_j, m^{-1/\gamma})} V_{p_j} \, \omega \\
	&= \int_{\B(p_j, m^{-1/\gamma})} V_{p_j} \, \omega + \int_{X \setminus \B(p_j, m^{-1/\gamma})} U^\star_{\delta_{p_j}} \, \omega \\
	&=   \int_{\B(p_j, m^{-1/\gamma})} V_{p_j} \, \omega - \int_{\B(p_j, m^{-1/\gamma})} U^\star_{\delta_{p_j}} \, \omega.
	\end{align*}
	Recall that $U^\star_{\delta_{p_j}}$ is uniformly bounded from above, independent of $p_j$. On $\B(p_j, m^{-1/\gamma})$, the function $V_{p_j}$ satisfies $\ddc V_{p_j} = -\omega$ with boundary condition $V_{p_j} = U^\star_{\delta_{p_j}}$ on $\partial \B(p_j, m^{-1/\gamma})$, which implies $V_{p_j} \leq c_1$ for some constant $c_1 > 0$ independent of $p_j$. Moreover, from formula \eqref{defn-potentil-type-I} and Lemma \ref{l:Green}, we have for $z \in \overline\B(p_j, m^{-1/\gamma})$,
	\begin{equation}\label{lowerbound-Udelta}
	U^\star_{\delta_{p_j}}(z) \geq c_2 \log \dist(z, p_j) ,
	\end{equation}
	where $c_2 > 0$ is also independent of $p_j$.
	Since $V_{p_j} > U^\star_{\delta_{p_j}}$ on $\B(p_j, m^{-1/\gamma})$, we have for $z\in X$,
	\begin{align*}
	\big| V_{p_j}(z) - U^\star_{\sigma_{p_j}}(z) \big| &= \Big| \int_X V_{p_j} \, \omega \Big| \leq \Big| \int_{\B(p_j, m^{-1/\gamma})} V_{p_j} \, \omega \Big| + \Big| \int_{\B(p_j, m^{-1/\gamma})} U^\star_{\delta_{p_j}} \, \omega \Big| \\
	&\leq \int_{\B(p_j, m^{-1/\gamma})} \max\big( c_1, |U^\star_{\delta_{p_j}}| \big) \, \omega + \int_{\B(p_j, m^{-1/\gamma})} |U^\star_{\delta_{p_j}}| \, \omega \\
	&\leq c_1 \cdot \Area_{\omega}\big(\B(p_j, m^{-1/\gamma})\big) + 2 \int_{\B(p_j, m^{-1/\gamma})} |U^\star_{\delta_{p_j}}| \, \omega.
	\end{align*}
	The first term is $O(m^{-2/\gamma})$.
	We now estimate the second term. Recall the estimate
	\[
	\Big| \int_{\mathbb{D}(0, r)} \log |z|  \, i  \dd z \wedge \dd\bar{z} \Big| = O(r^2 |\log r|).
	\]
	Taking \eqref{lowerbound-Udelta} into account, the second term  is also $ O(m^{-2/\gamma} \log m) $. This completes the proof of the lemma since $\gamma\leq 1$.
\end{proof}

We are now going to prove a crucial inequality linking the discrete energy functional $\oJ_{m,\phi}$ and the energy functional $\oI_\phi$.

\begin{lemma}\label{prop-find-p-1}
	There exists a constant $C>0$ independent of $\phi,m$ and $\mathbf p$ such that for all $m>1$,
$$
	\oE_m(\mathbf p)\leq \int_X  U^\star_{\sigma_{\mathbf p}}\,\dd \sigma_{\mathbf p} +C {\log m \over m}
$$
	and
	$$  \Big| \int_X \phi \,\dd \delta_{\mathbf p}-\int_X \phi \,\dd \sigma_{\mathbf p}\Big| \leq   \norm{\phi}_{\Cc^\gamma}  m^{-1}.    $$
\end{lemma}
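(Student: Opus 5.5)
The second inequality is the easy one, so I will dispose of it first. Since $\sigma_{p_j}$ is a probability measure supported on $\partial\B(p_j,m^{-1/\gamma})$, and $\phi$ is $\gamma$-H\"older, for every $y$ in that support we have $|\phi(y)-\phi(p_j)|\leq \norm{\phi}_{\Cc^\gamma}\dist(y,p_j)^\gamma=\norm{\phi}_{\Cc^\gamma} m^{-1}$. Integrating against $\sigma_{p_j}$ gives $|\phi(p_j)-\int_X\phi\,\dd\sigma_{p_j}|\leq \norm{\phi}_{\Cc^\gamma}m^{-1}$. Averaging over $j$ yields the claim.

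For the energy inequality, I will expand both sides. We have
$$\int_X U^\star_{\sigma_{\mathbf p}}\,\dd\sigma_{\mathbf p}={1\over m^2}\sum_{j\neq k}\int_X U^\star_{\sigma_{p_j}}\,\dd\sigma_{p_k}+{1\over m^2}\sum_{j}\int_X U^\star_{\sigma_{p_j}}\,\dd\sigma_{p_j}.$$
I treat the diagonal and off-diagonal terms separately.

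The diagonal terms are controlled as follows. By Lemma \ref{lem-delta-sigma}, on $\supp\sigma_{p_j}=\partial\B(p_j,m^{-1/\gamma})$ we have $U^\star_{\sigma_{p_j}}=U^\star_{\delta_{p_j}}+O(m^{-2}\log m)$. Lemma \ref{l:Green} gives $U^\star_{\delta_{p_j}}(z)=G(p_j,z)=\log\dist(z,p_j)+O(1)=-\gamma^{-1}\log m+O(1)$ there. Hence each diagonal integral is $\geq -C\log m$, and their total contribution is $\geq -Cm^{-1}\log m$.

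For the off-diagonal terms, I aim to show $\int_X U^\star_{\sigma_{p_j}}\,\dd\sigma_{p_k}\geq G(p_j,p_k)-Cm^{-2}\log m$ for $j\neq k$. Summing these over $m^2$ pairs and dividing by $m^2$ gives an error of $O(m^{-2}\log m)$, which is harmless. The key observation is the following. $V_{p_j}\geq U^\star_{\delta_{p_j}}$ everywhere, and by Lemma \ref{lem-delta-sigma} $U^\star_{\sigma_{p_j}}=V_{p_j}+O(m^{-2}\log m)$ on all of $X$, so $U^\star_{\sigma_{p_j}}\geq U^\star_{\delta_{p_j}}-Cm^{-2}\log m$ on $X$. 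Integrating against $\sigma_{p_k}$ and using the symmetry \eqref{commute-potential} gives
$$\int U^\star_{\sigma_{p_j}}\,\dd\sigma_{p_k}\geq\int U^\star_{\delta_{p_j}}\,\dd\sigma_{p_k}-Cm^{-2}\log m=\int U^\star_{\sigma_{p_k}}\,\dd\delta_{p_j}-Cm^{-2}\log m=U^\star_{\sigma_{p_k}}(p_j)-Cm^{-2}\log m.$$
Applying the same bound once more gives $U^\star_{\sigma_{p_k}}(p_j)\geq U^\star_{\delta_{p_k}}(p_j)-Cm^{-2}\log m=G(p_k,p_j)-Cm^{-2}\log m$. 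This holds even if $p_j\in\B(p_k,m^{-1/\gamma})$, since the inequality $V\geq U^\star_\delta$ is global, so no separation between the points is needed. Combining the diagonal and off-diagonal estimates gives $\int U^\star_{\sigma_{\mathbf p}}\,\dd\sigma_{\mathbf p}\geq\oE_m(\mathbf p)-Cm^{-1}\log m$.

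The main point requiring care is the global inequality $U^\star_{\sigma_{p_j}}\geq U^\star_{\delta_{p_j}}-O(m^{-2}\log m)$. It is exactly what avoids any case analysis on close points. I must check that the constant in Lemma \ref{lem-delta-sigma} is uniform in $p_j$, which that lemma asserts. I also need symmetry \eqref{commute-potential} to hold when one of the measures is the Dirac mass $\delta_{p_j}$. This follows from the symmetry of $G$ together with Fubini, using that $\sigma_{p_k}$ has bounded potential.
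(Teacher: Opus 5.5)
Your proof is correct and follows essentially the paper's argument. For the off-diagonal terms you apply the global bound $U^\star_{\delta_{p_j}}\leq V_{p_j}\leq U^\star_{\sigma_{p_j}}+O(m^{-2}\log m)$ twice, together with the symmetry \eqref{commute-potential}. The $m$ diagonal terms are each $O(\log m)$ by Lemma \ref{lem-delta-sigma} and the logarithmic singularity of $G$ on $\partial\B(p_j,m^{-1/\gamma})$. The only differences are cosmetic: you run the chain starting from $\int_X U^\star_{\sigma_{p_j}}\,\dd\sigma_{p_k}$ rather than from $G(p_j,p_k)$, and you (correctly) use only a lower bound on the diagonal terms.
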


\begin{proof} 
	We use formula \eqref{defn-potentil-type-I} to rewrite
	\begin{equation}\label{exp-oE}
	\oE_m(\mathbf p) = \frac{1}{m^2} \sum_{j \neq k} G(p_j,p_k)= \frac{1}{m^2} \sum_{j \neq k} U^\star_{\delta_{p_j}}(p_k).
	\end{equation}
	All terms are finite in the summation.
   For each $j$,  Lemma \ref{lem-delta-sigma} gives
   $$ U^\star_{\delta_{p_j}}\leq V_{p_j} \leq   U^\star_{\sigma_{p_j}}+O(m^2\log m). $$
   For each $j,k$, by \eqref{commute-potential}, we have
   $$ U^\star_{\sigma_{p_j}} (p_k)=\int_X  U^\star_{\sigma_{p_j}} \,\dd \delta_{p_k} =\int_X U^\star_{\delta_{p_k}} \,\dd \sigma_{p_j}\leq \int_X U^\star_{\sigma_{p_k}} \,\dd \sigma_{p_j}+O(m^2\log m).$$  
We conclude from last two inequalities that
$$\oE_m(\mathbf p)\leq {1\over m^2}  \sum_{j \neq k}  \int_X U^\star_{\sigma_{p_k}} \,\dd \sigma_{p_j}+O(m^2\log m).   $$

By definition of $\sigma_{\mathbf p}$,
$$\int_X  U^\star_{\sigma_{\mathbf p}}\,\dd \sigma_{\mathbf p}={1\over m^2}\sum_{j,k} \int_X U^\star_{\sigma_{p_k}} \,\dd \sigma_{p_j}. $$
Thus, to prove the first assertion, it remains to show 
\begin{equation}\label{pkpk}
\Big|{1\over m^2} \sum_{1\leq k\leq m} \int_X U^\star_{\sigma_{p_k}} \,\dd \sigma_{p_k}\Big| =O\Big( {\log m\over m}  \Big)
\end{equation}    

  Lemma \ref{lem-delta-sigma} says that $|U^\star_{\sigma_{p_k}}-U^\star_{\delta_{p_k}}|=O( m^{-2}\log m)$ on the support of $\sigma_{p_k}$. Combining with \eqref{lowerbound-Udelta} and the fact that $U^\star_{\delta_{p_k}}$ is uniformly bounded from above, we conclude that 
  $$\Big|\int_X U^\star_{\sigma_{p_k}} \sigma_{p_k}\Big|=\Big|\int_X U^\star_{\delta_{p_k}} \sigma_{p_k}\Big|+O(m^{-2}\log m)\leq O(\log m). $$
This implies \eqref{pkpk} and gives the first assertion.
	
	\smallskip
	
	For the second assertion, recall that $\phi$ is $\gamma$-H\"older. Using that the distance from every point in $\supp(\sigma_{p_j})$  to $p_j$ is $m^{-1/\gamma}$, we get 
	$$\Big|\int_X \phi \,\dd \delta_{p_j}-\int_X \phi\,\dd \sigma_{p_j}\Big|\leq \norm{\phi}_{\Cc^\gamma}( m^{-1/\gamma})^\gamma= \norm{\phi}_{\Cc^\gamma} m^{-1}$$
		Therefore,
	$$ \Big| \int_X \phi \,\dd \delta_{\mathbf p}-\int_X \phi \,\dd \sigma_{\mathbf p}\Big| \leq  {1\over m}\sum_{1\leq j\leq m} \Big| \int_X \phi \,\dd \delta_{p_j}-\int_X \phi\,\dd \sigma_{p_j} \Big|   \leq \norm{\phi}_{\Cc^\gamma} m^{-1}. $$
This gives the second assertion and finishes the proof of the lemma.
\end{proof}

\begin{remark}\rm
In the first inequality in Lemma \ref{prop-find-p-1}, there is no lower bound. This is because when $\dist(p_j,p_k)$  is very small, $\oE_m(\mathbf p)$ will be very negative.   When we assume the points $p_j$'s are  far away from each other, then we also have the lower bound, see Lemma \ref{lem-oE-sigma} later.
\end{remark}

If $p_j$ is too close to the boundary of $K$, the support of $\sigma_{\mathbf p}$  may exceed $K$.
By Lemma \ref{prop-find-p-1}, we get the following inequality quite close to \eqref{ineq-mu-p}:
$$\oE_m(\mathbf{p}) -  2\int _X  \phi \,\dd\delta_{\mathbf p} \leq - \min_{\cM(K_m)} \oI_\phi + C(1+\norm{\phi}_{\Cc^\gamma})  \frac{\log m}{m}.$$
where
$$K_m:=\big\{x\in X:\,  \dist(x,K)\leq m^{-1/\gamma}   \big\}$$
is  the closure of $m^{-1/\gamma}$-neighborhood of $K$.
 We need to do one more perturbation to get Proposition \ref{prop-upper-functional}.

\begin{lemma}\label{lem-Km}
	We have
	$$  \big|\min_{\cM(K)} \oI_\phi - \min_{\cM(K_m)} \oI_\phi \big|\leq 6(\norm{U_{K,\phi}}_{\Cc^\gamma}+\norm{\phi}_{\Cc^\gamma})m^{-1}\leq C\norm{\phi}_{\Cc^\gamma}m^{-1}        $$
	for some $C>0$ independent of $\phi$ and $m$ .
\end{lemma}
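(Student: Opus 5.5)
Since $K\subset K_m$, we have $\cM(K)\subset\cM(K_m)$, hence $\min_{\cM(K_m)}\oI_\phi\le\min_{\cM(K)}\oI_\phi$. The whole task is therefore the reverse inequality: we must bound $\min_{\cM(K)}\oI_\phi$ from above by $\min_{\cM(K_m)}\oI_\phi$ plus $O(m^{-1})$.

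The tool is the characterization of Proposition \ref{sol-min}. It gives the minimum through the potential $U_{K,\phi}$: integrating the second identity against $\nu_{K,\phi}$ and using $U_{K,\phi}=\phi$ on $\supp\nu_{K,\phi}$, we can express $\min_{\cM(K)}\oI_\phi$ via $U^\star_{\nu_{K,\phi}}$, $U_{K,\phi}$ and $\phi$. I will compare the two envelopes $U_{K,\phi}$ and $U_{K_m,\phi}$ directly.

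\textbf{Step 1 (comparison of envelopes).} Clearly $U_{K_m,\phi}\le U_{K,\phi}$, since the constraint on $K_m$ is stronger. Conversely, let $\psi$ be any competitor for $U_{K,\phi}$. Take $y\in K_m$ and a point $x\in K$ with $\dist(x,y)\le m^{-1/\gamma}$. Then the H\"older bound of Lemma \ref{lem-UK}, together with the H\"older continuity of $\phi$, gives
$$U_{K,\phi}(y)\le U_{K,\phi}(x)+\norm{U_{K,\phi}}_{\Cc^\gamma}m^{-1}\le \phi(y)+(\norm{U_{K,\phi}}_{\Cc^\gamma}+\norm{\phi}_{\Cc^\gamma})m^{-1}.$$
Writing $\epsilon_m:=(\norm{U_{K,\phi}}_{\Cc^\gamma}+\norm{\phi}_{\Cc^\gamma})m^{-1}$, the function $U_{K,\phi}-\epsilon_m$ is admissible for $U_{K_m,\phi}$. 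Hence
$$0\le U_{K,\phi}-U_{K_m,\phi}\le \epsilon_m \quad\text{on } X.$$

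\textbf{Step 2 (translate to the minimal energies).} Set $F:=\min_{\cM(K)}\oI_\phi$ and $F_m:=\min_{\cM(K_m)}\oI_\phi$. By Proposition \ref{sol-min}, $U^\star_{\nu_{K,\phi}}=U_{K,\phi}+c_K$ with $c_K=\int\phi\,d\nu_{K,\phi}-F$. Since $\int U^\star\,\omega=0$, we get $c_K=-\int U_{K,\phi}\,\omega$. Thus
$$F=\int\phi\,d\nu_{K,\phi}+\int U_{K,\phi}\,\omega,$$
and similarly for $K_m$. The second terms differ by at most $\epsilon_m$ by Step 1. For the first terms, note that $\int\phi\,d\nu_{K,\phi}=\int U_{K,\phi}\,d\nu_{K,\phi}$. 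To compare $\int U_{K,\phi}\,d\nu_{K,\phi}$ with $\int U_{K_m,\phi}\,d\nu_{K_m,\phi}$, I plan to use the variational formula for the derivative of the functional $\varphi\mapsto\int\varphi\,\omega$-type Monge--Amp\`ere energy: in dimension one, the energy $\mathcal E(\varphi)=\frac12\int\varphi\,(\omega+(\omega+\ddc\varphi))$ is monotone and has derivative $\omega+\ddc\varphi$. Alternatively, more simply, I will write
$$\int U\,d\nu - \int U'\,d\nu' = \int (U-U')\,d\nu + \int U'\,\ddc(U-U'),$$
and integrate the last term by parts to $\int (U-U')\,\ddc U'$. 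Each term is then bounded by $\sup|U-U'|$ times a probability mass, up to the $\omega$ parts. Summing, the total comes out as a small multiple of $\epsilon_m$, at most $6\epsilon_m$. Finally, Lemma \ref{lem-UK} replaces $\norm{U_{K,\phi}}_{\Cc^\gamma}$ by $c\norm{\phi}_{\Cc^\gamma}$, which gives the second inequality of the statement.

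\textbf{Main obstacle.} I expect the main obstacle to be the bookkeeping in Step 2, namely expressing $F$ purely in terms of envelopes so that only $\sup|U_{K,\phi}-U_{K_m,\phi}|$ enters. This requires checking that the integration by parts is legitimate for the continuous potentials involved, which Lemma \ref{lem-UK} guarantees.
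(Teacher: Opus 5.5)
Your proposal is correct and is essentially the paper's argument. Step 1 is exactly the paper's envelope comparison $0\le U_{K,\phi}-U_{K_m}\le \epsilon_m$. Step 2 uses the same integration-by-parts device: move $\ddc$ off the difference $U_{K,\phi}-U_{K_m}$ onto one of the potentials, so that only $\sup_X|U_{K,\phi}-U_{K_m}|$ times probability masses appears. The paper applies this to the two terms of $\oI_\phi$ separately and gets $2\epsilon_m+4\epsilon_m$. Your repackaging $\min_{\cM(K)}\oI_\phi=\int_X\phi\,\dd\nu_{K,\phi}+\int_X U_{K,\phi}\,\omega$ merges those two estimates into one. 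Once the $\omega$-terms cancel, it even gives the sharper bound $0\le \min_{\cM(K)}\oI_\phi-\min_{\cM(K_m)}\oI_\phi=\int_X (U_{K,\phi}-U_{K_m})\,\dd(\nu_{K,\phi}+\nu_{K_m})\le 2\epsilon_m$.
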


\begin{proof}
	Put $A:=\norm{U_{K,\phi}}_{\Cc^\gamma}+\norm{\phi}_{\Cc^\gamma}$.
	Denote by  $\nu_{K_m}$  the minimizer of $\oI_\phi$ over $\cM(K_m)$.
	Recall from Proposition \ref{sol-min} that $\nu_{K,\phi}=\ddc U_{K,\phi}+\omega,\nu_{K_m}=\ddc U_{K_m} +\omega$, where
	$$U_{K,\phi}:= \sup_\varphi  \big\{  \varphi\in \SH(X,\omega): \; \varphi \leq \phi \,\text{ on }\, K\big\};$$
	$$U_{K_m}:= \sup_\varphi  \big\{  \varphi\in \SH(X,\omega): \; \varphi \leq \phi \,\text{ on }\, K_m\big\}.$$
	Obviously, $U_{K,\phi}\geq U_{K_m}$ since $K\subset K_m$.  Moreover,  $U_{K,\phi}\leq \phi$ on $K$   implies 
	  $$U_{K,\phi}-\phi\leq (\norm{U_{K,\phi}}_{\Cc^\gamma}+\norm{\phi}_{\Cc^\gamma})\cdot (m^{-1/\gamma})^\gamma =A m^{-1} \quad\text{on }\,  K_m.$$  
	   This means that $U_{K,\phi}-Am^{-1}\leq U_{K_m}$ from the definition of $U_{K_m}$, and hence
	   $$ \int_X U_{K,\phi}\,\omega -Am^{-1}\leq  \int_X U_{K_m}\,\omega \leq \int_X U_{K,\phi}\,\omega. $$
	    Thus, by the definition of mean normalized $\omega$-potential, we conclude that
	    $$|U^\star_{\nu_{K,\phi}}-U^\star_{\nu_{K_m}}|\leq Am^{-1}.$$
	Therefore, applying \eqref{commute-potential} several times, we get
	\begin{align*}
	&\Big| \int_X  U^\star_{\nu_{K,\phi}}  \,\dd  \nu_{K,\phi}- \int_X U^\star_{\nu_{K_m}} \,\dd \nu_{K_m}     \Big| \leq \Big| \int_X  U^\star_{\nu_{K,\phi}}  \,\dd  \nu_{K,\phi}- \int_X U^\star_{\nu_{K,\phi}} \,\dd \nu_{K_m}     \Big|+Am^{-1}\\
	&=\Big| \int_X  U^\star_{\nu_{K,\phi}}  \,\dd  (\nu_{K,\phi}-\nu_{K_m})  \Big| +Am^{-1}=\Big| \int_X  U^\star_{\nu_{K,\phi}}  \,\ddc  (U^\star_{\nu_{K,\phi}}-U^\star_{\nu_{K_m}})  \Big| +Am^{-1}\\
	&= \Big| \int_X     (U^\star_{\nu_{K,\phi}}-U^\star_{\nu_{K_m}}) \,\ddc U^\star_{\nu_{K,\phi}}    \Big| +Am^{-1} =\Big| \int_X     (U^\star_{\nu_{K,\phi}}-U^\star_{\nu_{K_m}}) \,(\ddc U^\star_{\nu_{K,\phi}} +\omega)   \Big| +Am^{-1}\\
	&=\Big| \int_X     (U^\star_{\nu_{K,\phi}}-U^\star_{\nu_{K_m}}) \,\dd \nu_{K,\phi}     \Big| +Am^{-1}\leq 2Am^{-1}.
	\end{align*}
	
	For the integral of $\phi$,  Lemma \ref{lem-UK} says that $\phi=U_{K,\phi}$ on the support of $\nu_{K,\phi}$. So we have
	$$\int_X  \phi \,\dd \nu_{K,\phi}=\int_X  U_{K,\phi} \,\dd \nu_{K,\phi} \quad\text{and}\quad \int_X  \phi \,\dd \nu_{K_m}=\int_X  U_{K_m} \,\dd \nu_{K_m}.$$
    It follows that 
    $$ \Big|2 \int_X  \phi \,\dd \nu_{K,\phi}- 2 \int_X  \phi \,\dd \nu_{K_m}\Big|=2\Big| \int_X U_{K,\phi} \,\dd \nu_{K,\phi}-  \int_X U_{K_m} \,\dd \nu_{K_m}\Big|\leq 4Am^{-1}.    $$
	The definition of $\oI_\phi$ gives the first inequality of the lemma. The second inequality follows by Lemma \ref{lem-UK}.
\end{proof}

Now we can  finish the proof of Proposition \ref{prop-upper-functional}.

\begin{proof}[Proof of Proposition \ref{prop-upper-functional}]
	By Lemma \ref{prop-find-p-1}, for any $\mathbf p\in K^m$, we have 
	$$ -\oJ_{m,\phi}(\mathbf p) \leq -\oI_\phi(\sigma_{\mathbf p}) + O(1+\norm{\phi}_{\Cc^\gamma}) \frac{\log m}{m}.$$
	Since $\supp(\sigma_{\mathbf p})\subset K_m$, giving $\min_{\cM(K_m)}\oI_\phi \leq \oI_\phi(\sigma_{\mathbf p})$. Combining with Lemma \ref{lem-Km} We deduce that 
	$$-\oJ_{m,\phi}(\mathbf p) \leq -\min_{\cM(K_m)}\oI_\phi+O(1+\norm{\phi}_{\Cc^\gamma}) \frac{\log m}{m}\leq   -\min_{\cM(K)}\oI_\phi+2O(1+\norm{\phi}_{\Cc^\gamma}) \frac{\log m}{m}. $$
	The complete the proof of the proposition.
\end{proof}

\section{Optimal lower bound of $\oJ_{m,\phi}$}\label{sec-com-2}

In Section \ref{sec-com-1}, we proved that $\min_{\cM(K)}\oI_\phi$ is a lower bound of $\oJ_{m,\phi}$ over $K^m$. 
In this section, we will show that it is optimal. In other words, we will construct a configuration $\mathbf{p} \in K^m$ that satisfies
\begin{equation*}\label{exist-p}
\oJ_{m,\phi}(\mathbf p) \leq \min_{\cM(K)}\oI_\phi + O\Big(\frac{\log m}{m}\Big).
\end{equation*}
The main idea comes from Nishry-Wennman \cite[Appendix]{Nishry-Wennman}. But the settings are quite different, where they work on $\C$. Here, we need to overcome the difficulties arising from the compactness of $X$ as well as the curvature of $\omega$.

\medskip

For   $m > 1$ and $\mathbf p \in X^m$, define the auxiliary  functional
\begin{equation}
\label{define K_m}
\oK_{m,\phi} (\mathbf p) := -\frac{m}{m-1} \oE_m (\mathbf p) + 2 \int_X \phi \, \dd \delta_{\mathbf p}.
\end{equation}
It only differ from $\oJ_{m,\phi}$   by a factor ${m\over m-1}$ on  $\oE_m (\mathbf p)$, whose appearance is quite important (see Lemma \ref{lem-fekete} below).
Let $\mathbf f_m =( f_{m, 1},\dots,f_{m,m})$ be a minimizer of $\oK_{m,\phi}(\mathbf p)$ for  $\mathbf p\in K^m$. It may not be unique, but we fix one. Clearly, $f_{m,j}$'s are  distinct. For each $j$, We denote 
$$\mathbf f'_{m,j}:=(f_{m, 1},\dots, f_{m, j-1},f_{m,j+1},\dots,f_{m,m})\quad\text{and}\quad \delta_{\mathbf f'_{m,j}}:={1\over m-1} \sum_{k\neq j}\delta_{f_{m,k}}  .$$

\begin{lemma}\label{lem-fekete}
For each $1\leq j\leq m$,  we have
	\begin{equation}\label{ineq-f-0}
	-U^\star_{\nu_{K,\phi}}(f_{m,j}) + \phi(f_{m,j}) = \min_{\cM(K)}\oI_\phi - \int_X \phi \, \dd \nu_{K,\phi} ;
	\end{equation}
	\begin{equation}	\label{ineq-f-1}
	-U^\star_{\delta_{\mathbf f'_{m,j}}}(f_{m, j}) + \phi(f_{m, j}) \leq -U^\star_{\delta_{\mathbf f'_{m,j}}}(z) + \phi(z)  \quad\text{for all }\, z \in K.
\end{equation}
\end{lemma}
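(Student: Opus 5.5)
The plan is to prove \eqref{ineq-f-1} first, directly from the minimality of $\mathbf f_m$, and then to deduce \eqref{ineq-f-0} from \eqref{ineq-f-1} together with the envelope $U_{K,\phi}$ and Proposition \ref{sol-min}.

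\emph{Step 1: proof of \eqref{ineq-f-1}.} Fix $j$ and let $\mathbf p^z\in K^m$ be the configuration obtained from $\mathbf f_m$ by replacing $f_{m,j}$ with an arbitrary $z\in K$. Since $G$ is symmetric,
$$\frac{m}{m-1}\oE_m(\mathbf p^z)=\frac{1}{m(m-1)}\sum_{k\neq \ell,\;k,\ell\neq j}G(f_{m,k},f_{m,\ell})+\frac{2}{m(m-1)}\sum_{k\neq j}G(z,f_{m,k}).$$
By \eqref{defn-potentil-type-I}, the last sum equals $\frac{2}{m}U^\star_{\delta_{\mathbf f'_{m,j}}}(z)$. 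Likewise, $2\int_X\phi\,\dd\delta_{\mathbf p^z}$ is $\frac{2}{m}\phi(z)$ plus a term independent of $z$. Hence
$$\oK_{m,\phi}(\mathbf p^z)=C_j+\frac{2}{m}\Big(-U^\star_{\delta_{\mathbf f'_{m,j}}}(z)+\phi(z)\Big),$$
where $C_j$ does not depend on $z$. This is exactly why the factor $\frac{m}{m-1}$ was built into $\oK_{m,\phi}$: it makes the coefficients of the potential term and of the $\phi$ term both equal to $\frac{2}{m}$. Since $\mathbf f_m$ minimizes $\oK_{m,\phi}$ over $K^m$, the value at $z=f_{m,j}$ is at most the value at any $z\in K$, which is \eqref{ineq-f-1}. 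All quantities are finite because the points $f_{m,k}$ are distinct.

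\emph{Step 2: the point $f_{m,j}$ lies in the contact set.} Put $c:=-U^\star_{\delta_{\mathbf f'_{m,j}}}(f_{m,j})+\phi(f_{m,j})$, which is finite. By \eqref{ineq-f-1},
$$U^\star_{\delta_{\mathbf f'_{m,j}}}+c\leq\phi\quad\text{on } K.$$
Moreover $U^\star_{\delta_{\mathbf f'_{m,j}}}$ is $\omega$-subharmonic, since $\ddc U^\star_{\delta_{\mathbf f'_{m,j}}}=\delta_{\mathbf f'_{m,j}}-\omega\geq-\omega$; adding the constant $c$ preserves this. So $U^\star_{\delta_{\mathbf f'_{m,j}}}+c$ is a competitor in the definition of $U_{K,\phi}$, and therefore $U^\star_{\delta_{\mathbf f'_{m,j}}}+c\leq U_{K,\phi}$ on $X$. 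Evaluating at $f_{m,j}\in K$ and using that $U_{K,\phi}\leq\phi$ on $K$ gives
$$\phi(f_{m,j})=U^\star_{\delta_{\mathbf f'_{m,j}}}(f_{m,j})+c\leq U_{K,\phi}(f_{m,j})\leq\phi(f_{m,j}).$$
Hence $U_{K,\phi}(f_{m,j})=\phi(f_{m,j})$.

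\emph{Step 3: proof of \eqref{ineq-f-0}.} The second identity in Proposition \ref{sol-min} holds on all of $X$, namely $U^\star_{\nu_{K,\phi}}=U_{K,\phi}+\int_X\phi\,\dd\nu_{K,\phi}-\min_{\cM(K)}\oI_\phi$. At $f_{m,j}$, Step 2 lets us replace $U_{K,\phi}(f_{m,j})$ by $\phi(f_{m,j})$. Rearranging then gives \eqref{ineq-f-0}.

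The only delicate point is Step 2. One needs to know that $f_{m,j}$ is a point where $U_{K,\phi}$ touches $\phi$; it need not be known to lie in $\supp(\nu_{K,\phi})$. This is why the argument uses the identity of Proposition \ref{sol-min} valid on all of $X$, rather than the one restricted to $\supp(\nu_{K,\phi})$.
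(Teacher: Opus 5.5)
Your proof is correct and follows the paper's argument. Step 1 is the paper's isolation of the $z$-dependent part $\frac{2}{m}\big(-U^\star_{\delta_{\mathbf f'_{m,j}}}(z)+\phi(z)\big)$ of $\oK_{m,\phi}$, and your competitor $U^\star_{\delta_{\mathbf f'_{m,j}}}+c$ is exactly the paper's auxiliary function $\Psi$. The only difference is packaging: you first record the contact property $U_{K,\phi}(f_{m,j})=\phi(f_{m,j})$ and then substitute it into the global identity of Proposition \ref{sol-min}, whereas the paper proves the two inequalities making up \eqref{ineq-f-0} separately, one from $U_{K,\phi}\leq\phi$ on $K$ and the other from $\Psi\leq U_{K,\phi}$.
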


\begin{proof}
We only show the lemma for $j=1$. The other cases can be treated in the same way.
 Define the function
	\[ \Phi(z) := \oK_{m,\phi}(z, f_{m, 2} , \dots , f_{m, m}) \quad\text{for }\, z\in K, \]
 who attains a minimum  at $z = f_{m, 1}$. After removing constant terms from $\Phi$, we get a new function (recall $G(z, f_{m, j})=G(f_{m, j},z)$)
		\[ \widetilde{\Phi}(z) := -\frac{m}{m-1} \frac{2}{m^2} \sum_{2\leq j \leq m} G(z, f_{m, j}) + \frac{2}{m} \phi(z) = \frac{2}{m} \big( -U^\star_{\delta_{\mathbf f'_{m,1}}}(z) + \phi(z) \big) \]
	who also attains its minimum over $K$ at $z = f_{m, 1}$. In other words,
	$$
	-U^\star_{\delta_{\mathbf f'_{m,1}}}(f_{m, 1}) + \phi(f_{m, 1}) \leq -U^\star_{\delta_{\mathbf f'_{m,1}}}(z) + \phi(z)  \quad\text{for all }\, z \in K.
	$$
	This gives the second assertion. 
		For the first one,
		By the second inequality in Proposition \ref{sol-min},
	and using that $U_{K,\phi}\leq \phi$ on $K$, we get 
	$$  U^\star_{\nu_{K,\phi}}-\int_X \phi \,\dd\nu_{K,\phi}+\min_{\cM(K)}\oI_\phi\leq \phi \quad\text{on }\, K.$$
	So, it remains to show that 
	\begin{equation}
	\label{leq-lemma51}
	-U^\star_{\nu_{K,\phi}}(f_{m, 1}) + \phi(f_{m, 1}) \leq \min \oI_\phi - \int_X \phi \, \dd \nu_{K,\phi}
	\end{equation}
Consider another auxiliary function \[\Psi(z) := U^\star_{\delta_{\mathbf f'_{m,1}}}(z) - U^\star_{\delta_{\mathbf f'_{m,1}}}(f_{m, 1}) + \phi(f_{m, 1}).\]  It is $\omega$-subharmonic and bounded from above by $\phi$ on $K$. By the definition of $U_{K,\phi}$, It should also be bounded by $U_{K,\phi}$ on $X$. Therefore, using Proposition \ref{sol-min} again, we have
	\[ \Psi(z)\leq U_{K,\phi}(z) =U^\star_{\nu_{K,\phi}}(z)-\int_X \phi \,\dd\nu_{K,\phi}+\min_{\cM(K)}\oI_\phi\]
	Taking $z = f_{m, 1}$, we obtain \eqref{leq-lemma51} and finish the proof of the lemma.
\end{proof}

In the proof above, one should see that in the definition of $\oK_{m,\phi}$, the factor ${m\over m-1}$ cannot be dropped.

\begin{lemma}\label{sep-fm}
For all $m>1$, we have
$$ 
	\dist(f_{m, j}, f_{m, k}) \geq \beta m^{-1/\gamma}  \, \text{ if }\,   j \neq k, 
$$
where $\beta$ depends on $\norm{\phi}_{\Cc^\gamma}$,  but not on $\phi$ and $m$.
\end{lemma}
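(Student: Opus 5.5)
The plan is to argue by contradiction near a single point, using only the one-point optimality \eqref{ineq-f-1} from Lemma \ref{lem-fekete} together with a local mean-value (Poisson) argument on a disc of radius comparable to $\rho:=m^{-1/\gamma}$ contained in $K$. By symmetry I only treat $j=1$ and write $r:=\dist(f_{m,1},f_{m,2})$, where $f_{m,2}$ is a point of $\mathbf f_m$ closest to $f_{m,1}$. The goal is $r\geq\beta\rho$. We may assume $r<\rho$, since otherwise there is nothing to prove. Since $\Psi$ in the proof of Lemma \ref{lem-fekete} already gave some control, no global information beyond \eqref{ineq-f-1} should be needed.

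\textbf{Step 1: a disc inside $K$ near $f_{m,1}$.} Since $K$ is the closure of an open set with $\Cc^2$ boundary, it satisfies a uniform interior ball condition. Hence there is $c_0>0$, depending only on $K$, such that for $\rho$ small every point $x\in K$ lies in a closed disc $D\subset K$ of radius $\rho':=c_0\rho$ (in a fixed holomorphic chart) whose center is at distance at most $\rho'/2$ from $x$. For large $\rho$, i.e.\ finitely many $m$, the statement is trivial after adjusting $\beta$. I fix such a disc $D$ for $x=f_{m,1}$ and a smooth local potential $q$ with $\ddc q=\omega$ on a neighbourhood of $D$, with uniformly bounded $\Cc^1$ norm.

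\textbf{Step 2: rewrite \eqref{ineq-f-1} as a statement about a subharmonic function.} Put $v:=U^\star_{\delta_{\mathbf f'_{m,1}}}+q$. Then $\ddc v=\delta_{\mathbf f'_{m,1}}\geq0$ on $D$, so $v$ is subharmonic there. Inequality \eqref{ineq-f-1} gives, for every $z\in D\subset K$,
$$v(z)-v(f_{m,1})\leq \phi(z)-\phi(f_{m,1})+q(z)-q(f_{m,1})\leq \big(\norm{\phi}_{\Cc^\gamma}+C\big)\rho^\gamma\leq \frac{A}{m},$$
with $A$ depending only on $\norm{\phi}_{\Cc^\gamma}$ and $K$. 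Hence $\max_{\partial D}v\leq v(f_{m,1})+A/m$.

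\textbf{Step 3: the Poisson argument.} I split
$$v=\frac{1}{m-1}G(\cdot,f_{m,2})+w,\qquad w:=\frac{1}{m-1}\sum_{k\geq3}G(\cdot,f_{m,k})+q.$$
The function $w$ is subharmonic on $D$. Let $P_D[\cdot](x)$ denote the Poisson integral on $D$ at $x=f_{m,1}$. Subharmonicity gives $w(f_{m,1})\leq P_D[w](f_{m,1})$. By Lemma \ref{l:Green}, $G(y,f_{m,2})=\log|y-f_{m,2}|+O(1)$ in the chart, with the Lipschitz error bounded by a constant $c$. This yields two estimates:
\begin{itemize}
\item $G(f_{m,1},f_{m,2})\leq\log r+c$;
\item $P_D[G(\cdot,f_{m,2})](f_{m,1})\geq P_D[\log|\cdot-f_{m,2}|](f_{m,1})-c\geq\log(\rho'/4)-c$.
\end{itemize}
The last inequality is an explicit computation. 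If $f_{m,2}\in D$, the Poisson integral of $\log|\cdot-a|$ equals $\log|\rho'^2-\bar a x|/\rho'$ (centered coordinates), and since $|x|\leq\rho'/2$ this is at least $\log(\rho'/2)$. If $a\notin D$, it equals $\log|x-a|\geq\log(\rho'/2)$. Combining,
$$\max_{\partial D}v\;\geq\; P_D[v](f_{m,1})\;\geq\; v(f_{m,1})+\frac{1}{m-1}\Big(\log\frac{\rho'}{4}-\log r-2c\Big).$$
Comparing with Step 2 gives $\log(\rho'/r)\leq 2A+2c+\log4$. This yields $r\geq\beta\rho=\beta m^{-1/\gamma}$ with $\beta:=c_0e^{-2A-2c}/4$, which depends only on $\norm{\phi}_{\Cc^\gamma}$ and $K$.

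\textbf{Main obstacle.} The delicate point is not to estimate the other $m-2$ Green terms pointwise. A naive bound using only $G\leq M$ yields merely $r\geq e^{-Cm}$. The subharmonicity of $w$ together with the Poisson representation absorbs all the other points at once, so they never need to be controlled individually. The other technical issue is Step 1, placing a disc of radius $\asymp\rho$ inside $K$ with $f_{m,1}$ well inside it when $f_{m,1}$ lies on or near $\partial K$. This is exactly where the $\Cc^2$ boundary assumption is used, and I would check carefully that the chart constants are uniform.
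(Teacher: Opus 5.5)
\textbf{The gap is Step 1, and it is not a technicality.} If a closed disc $D\subset K$ of radius $\rho'$ has its centre within $\rho'/2$ of $x$, then $\B(x,\rho'/2)\subset D\subset K$. So $x$ would be an interior point of $K$ at distance at least $\rho'/2$ from $\partial K$. The interior ball condition from the $\Cc^2$ boundary only gives a disc of radius $\rho'$ in $K$ with $x$ \emph{on its boundary}.

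Points of $\mathbf f_m$ on $\partial K$, or within $\rho'/2$ of it, cannot be excluded, and they are the typical case. Indeed $\nu_{K,\phi}$ may charge $\partial K$: for $\phi=0$ and $K\neq X$ one has $U_{K,0}=0$ on $K$, so $\nu_{K,0}=\omega$ on the interior of $K$ and $\nu_{K,0}(\partial K)=\omega(X\setminus K)>0$. At such a point every circle around $f_{m,1}$ leaves $K$, and \eqref{ineq-f-1} gives no upper bound for $v$ on the part outside $K$.

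No cleverer local geometry can fix this, because data on $K$ plus subharmonicity of $w$ force no separation. Take the half-plane model $K=\{\mathrm{Im}\,z\geq0\}$, $f_{m,1}=0$, $f_{m,2}=ir$. Set $w=-\frac{1}{m-1}\log|z+ir|$ on $K$, extended by $-\frac{1}{m-1}\bigl(\log|z-ir|+2\,\mathrm{Im}\,z/r\bigr)$ on $\{\mathrm{Im}\,z<0\}$. This $w$ is subharmonic on $\C$, since the jump of $\partial_y w$ across $\R$ is $\frac{2x^2}{(m-1)r(x^2+r^2)}\geq 0$. Yet $v=\frac{1}{m-1}\log|z-ir|+w=\frac{1}{m-1}\log\bigl|\frac{z-ir}{z+ir}\bigr|\leq 0=v(0)$ on $K$ for every $r>0$. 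So your claim that no global information beyond \eqref{ineq-f-1} is needed fails exactly at boundary points.

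\textbf{What is missing is control of $U^\star_{\delta_{\mathbf f'_{m,1}}}$ off $K$,} which is the paper's \eqref{UzlessUf}. It follows from the function $\Psi$ you mention. $\Psi$ is $\omega$-subharmonic and $\leq\phi$ on $K$, hence $\Psi\leq U_{K,\phi}$ on all of $X$. Also, \eqref{ineq-f-0} says $U_{K,\phi}(f_{m,1})=\phi(f_{m,1})$. Combining these with Lemma~\ref{lem-UK} gives
\[
U^\star_{\delta_{\mathbf f'_{m,1}}}(z)-U^\star_{\delta_{\mathbf f'_{m,1}}}(f_{m,1})\leq U_{K,\phi}(z)-U_{K,\phi}(f_{m,1})\leq c\norm{\phi}_{\Cc^\gamma}\dist(z,f_{m,1})^\gamma \quad\text{for all } z\in X.
\]
This is where the $\Cc^2$ boundary really enters: through the H\"older regularity of $U_{K,\phi}$ across $\partial K$, not through interior discs.

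With this bound, take $D$ to be the disc of radius $\rho'$ centred at $f_{m,1}$, with no requirement that $D\subset K$. Step 2 then holds with $A$ depending on $\norm{\phi}_{\Cc^\gamma}$. Your Step 3 goes through verbatim: at the centre the Poisson integral is the circle mean, and the mean of $\log|\cdot-f_{m,2}|$ is $\geq\log\rho'$.

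The repaired proof is a quantitative, purely subharmonic variant of the paper's argument. Your sub-mean-value estimate for $w$ replaces the Schwarz lemma applied to the holomorphic product $\Phi$. It gives an explicit $\beta$ rather than a contradiction along a subsequence.
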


\begin{proof}
	Without loss of generality, for every $m$, we assume $\dist(f_{m,1},f_{m,2})$ is the minimum among all $\dist(f_{m,j},f_{m,k}),j\neq k$. Suppose for contradiction, $m^{1/\gamma}\dist(f_{m,1},f_{m,2})$ tends to $0$ as $m\to \infty$. After taking a subsequence, we assume $f_{m,1},f_{m,2}$ both converge to some point $y\in K$. Taking a small open ball $B\ni y$.
	
	By Proposition \ref{sol-min} and Lemma \ref{lem-fekete}, we have for all $z\in K$,
	$$
	 U^\star_{\delta_{\mathbf f'_{m,j}}}(z) -U^\star_{\nu_{K,\phi}}(z) \leq U^\star_{\delta_{\mathbf f'_{m,j}}}(f_{m, j}) - U^\star_{\nu_{K,\phi}}(f_{m, j}). $$
	The function $U^\star_{\delta_{\mathbf f'_{m,j}}}(z) -U^\star_{\nu_{K,\phi}}$ is harmonic on $X\setminus K$. By maximal modulus principle, last inequality holds for all $z\in X$. Recall that $U^\star_{\nu_{K,\phi}}$ is $\gamma$-H\"older from Lemma \ref{lem-UK}. We obtain for all $z\in X$, 
	\begin{equation}\label{UzlessUf}
	U^\star_{\delta_{\mathbf f'_{m,1}}}(z) -U^\star_{\delta_{\mathbf f'_{m,1}}}(f_{m, 1})\leq \norm{U^\star_{\nu_{K,\phi}}}_{\Cc^\gamma}\dist(z,f_{m, 1})^\gamma \leq c\norm{\phi}_{\Cc^\gamma}\dist(z,f_{m, 1})^\gamma. \end{equation}
	
	Now using \eqref{sameweight}, for each $m$ and $j$, we take $\omega$-admissible metric $\fh_{m,j}$ for the line bundle $\Oc(f_{m,j})$, such that $$|\mathbf 1_{\Oc(f_{m,j})}|_{\fh_{m,j}} = |g_{m,j}| e^{-\varphi}   \quad\text{on }\, B,$$ 
	where $g_{m,j}$ is holomorphic on $B$ representing the holomorphic section having simple zero at $f_{m,j}$ and $\ddc \varphi=\omega$ on $B$. The formula \eqref{bundlepotential} gives that on $B$, one has
	$$  U^\star_{\delta_{\mathbf f'_{m,1}}}= {1\over m-1}\sum_{2\leq j\leq m} \big(\log|\mathbf 1_{\Oc(f_{m,j})}|_{\fh_{m,j}}+c_{\fh_{m,j}}\big)={1\over m-1}\sum_{2\leq j\leq m} \big( \log|g_{m,j} e^{-\varphi}|+c_{\fh_{m,j}}\big).  $$
	Substituting it into \eqref{UzlessUf}, we obtain for $z\in B$,
	$${1\over m-1}\sum_{2\leq j\leq m}  \Big(   \log \big|g_{m,j}(z) e^{-\varphi(z)}\big| -\log \big|g_{m,j}(f_{m, 1}) e^{-\varphi(f_{m, 1})}\big|  \Big)\leq  \norm{U^\star_{\nu_{K,\phi}}}_{\Cc^\gamma}\dist(z,f_{m, 1})^\gamma.$$  
	Now we restrict $z$ into the smaller open ball $\B(f_{m,1}, (m-1)^{-1/\gamma} )\subset B$. Then last inequality becomes
	$$ \Big|\prod_{2\leq j \leq m} { g_{m,j}(z)\over g_{m,j}(f_{m, 1}) } \Big| \leq  c\norm{\phi}_{\Cc^\gamma}\, e^{(m-1)(\varphi(z) -\varphi(f_{m, 1}))}   \quad\text{for }\,     z\in \B(f_{m,1}, (m-1)^{-1/\gamma} ).      $$
	The smoothness of $\varphi$ gives that $(m-1)(\varphi(z)-\varphi(f_{m,1}))$ is bounded on $ \B(f_{m,1}, (m-1)^{-1/\gamma} )$. Consider the holomorphic function 
	$$   \Phi(z):=  \prod_{2\leq j \leq m} { g_{m,j}(z)\over g_{m,j}(f_{m, 1}) }   \quad\text{on }\,  \B(f_{m,1}, (m-1)^{-1/\gamma} ).$$
	
	Remind that we assume $m^{1/\gamma}\dist(f_{m,1},f_{m,2})$ tends to $0$, which means $f_{m,2}\in  \B(f_{m,1}, (m-1)^{-1/\gamma} )$ for $m$ large enough. Thus, $\Phi$ is a holomorphic function on $\B(f_{m,1}, (m-1)^{-1/\gamma} )$, whose modulus is bounded by some constant independent of $m$, such that $\Phi(f_{m,1})=1, \Phi(f_{m,2})=0$ and $\dist(f_{m,1},f_{m,2})\to 0$. This contradicts to Schwarz lemma.
\end{proof}

The following is the main result of this section.

\begin{proposition}\label{prop-lower}
	There exists a constant $C>0$ independent of $\phi$ and $m$, but depending on $\norm{\phi}_{\Cc^\gamma}$, such that 
	$$   \oJ_{m,\phi}(\mathbf f_m) \leq \min_{\cM(K)}\oI_\phi + C\frac{\log m}{m} \quad\text{for all}\quad m>1.$$
\end{proposition}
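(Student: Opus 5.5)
The plan is to compare $\oE_m(\mathbf f_m)$ with the continuous energy through Lemma \ref{lem-fekete}, and to use the separation of Lemma \ref{sep-fm} to get the missing lower bound on $\oE_m$.

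\textbf{Step 1: average the first identity of Lemma \ref{lem-fekete}.} Average \eqref{ineq-f-0} over $j$ to get
$$-\int_X U^\star_{\nu_{K,\phi}}\,\dd\delta_{\mathbf f_m}+\int_X\phi\,\dd\delta_{\mathbf f_m}=\min_{\cM(K)}\oI_\phi-\int_X\phi\,\dd\nu_{K,\phi}.$$
By \eqref{commute-potential}, $\int U^\star_{\nu_{K,\phi}}\dd\delta_{\mathbf f_m}=\int U^\star_{\delta_{\mathbf f_m}}\dd\nu_{K,\phi}$. Expanding, one finds
$$\oJ_{m,\phi}(\mathbf f_m)-\min\oI_\phi = -\oE_m(\mathbf f_m) - 2\int U^\star_{\delta_{\mathbf f_m}}\dd\nu_{K,\phi}+\int U^\star_{\nu_{K,\phi}}\dd\nu_{K,\phi}+R,$$
where $R$ collects $\phi$-terms. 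Inserting \eqref{ineq-f-0} once more gives $R=0$. So the task reduces to showing
$$-\oE_m(\mathbf f_m)+2\int U^\star_{\nu}\dd\delta_{\mathbf f_m}-\int U^\star_\nu\dd\nu\le C\frac{\log m}{m},\qquad \nu:=\nu_{K,\phi}.$$
Formally, this is the energy $-\int U^\star_{\delta-\nu}\dd(\delta-\nu)$ with the diagonal removed.

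\textbf{Step 2: smear the points.} Replace $\delta_{\mathbf f_m}$ by $\sigma_{\mathbf f_m}$, built exactly as in Section \ref{sec-com-1} but with radius $r=\beta m^{-1/\gamma}/2$. The separation of Lemma \ref{sep-fm} makes the circles $\partial\B(f_{m,j},r)$ disjoint and keeps each $f_{m,k}$, $k\neq j$, outside $\B(f_{m,j},r)$.

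\textbf{Step 3: lower bound on the discrete energy.} By Lemma \ref{lem-delta-sigma}, with $\beta$ absorbed into constants, $U^\star_{\sigma_{f_j}}=U^\star_{\delta_{f_j}}+O(m^{-2}\log m)$ off $\B(f_j,r)$, and $U^\star_{\delta_{f_j}}\le V_{f_j}=U^\star_{\sigma_{f_j}}+O(m^{-2}\log m)$ everywhere. For $j\neq k$ this gives
$$G(f_j,f_k)=U^\star_{\sigma_{f_j}}(f_k)+O(m^{-2}\log m)\ge\int U^\star_{\sigma_{f_j}}\dd\sigma_{f_k}+O(m^{-2}\log m).$$
The inequality is the sub-mean-value comparison: $U^\star_{\delta_{f_k}}\le V_{f_k}$, with equality on $\supp\sigma_{f_k}$, and the pairing is symmetric. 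The diagonal terms $\frac1{m^2}\sum_k\int U^\star_{\sigma_{f_k}}\dd\sigma_{f_k}$ are $O(\log m/m)$ by the same computation as \eqref{pkpk}. Hence
$$\oE_m(\mathbf f_m)\ge\int U^\star_{\sigma}\dd\sigma-C\frac{\log m}{m},\qquad \sigma:=\sigma_{\mathbf f_m}.$$

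\textbf{Step 4: the cross term.} Since $U^\star_\nu$ is $\gamma$-H\"older by Lemma \ref{lem-UK}, $\big|\int U^\star_\nu\dd\delta_{\mathbf f_m}-\int U^\star_\nu\dd\sigma\big|\le c\norm{\phi}_{\Cc^\gamma}m^{-1}$.

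\textbf{Step 5: conclude.} Steps 3 and 4 bound the quantity in Step 1 by $-\int U^\star_{\sigma-\nu}\dd(\sigma-\nu)+C\log m/m$. By \eqref{positive energy} applied to the signed measure $\sigma-\nu$, this is at most $0+C\log m/m$.

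The main obstacle is Step 3, the lower bound on $\oE_m$. It is exactly where the separation Lemma \ref{sep-fm} is indispensable, which is the content of the earlier remark, and it requires careful use of the harmonic-replacement identity $V=U^\star_\sigma+O(m^{-2}\log m)$ with the correct direction of the inequalities. One must also check that the positivity \eqref{positive energy} holds for signed measures with mass zero, as used in Step 5. It does, since $\int U^\star_\rho\,\dd\rho=-\int \dd U^\star_\rho\wedge \dc U^\star_\rho\le0$.
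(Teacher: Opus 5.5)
Your Step 5 applies \eqref{positive energy} in the wrong direction, so the argument does not close. For $\rho=\sigma-\nu$ that inequality says $\int_X U^\star_\rho\,\dd\rho\le 0$; you note this yourself, since $\int U^\star_\rho\,\dd\rho=-\int \dd U^\star_\rho\wedge\dc U^\star_\rho$. Hence $-\int_X U^\star_{\sigma-\nu}\,\dd(\sigma-\nu)\ge 0$. Your Step 1 reduction is fine, apart from sign slips in the intermediate display. But for the reduced quantity $Q=-\oE_m(\mathbf f_m)+2\int U^\star_\nu\,\dd\delta_{\mathbf f_m}-\int U^\star_\nu\,\dd\nu$, Steps 3--4 only give $Q\le(\text{a nonnegative number})+C\log m/m$, which is vacuous.

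This cannot be patched inside your framework. After Step 1, the only facts about $\mathbf f_m$ you use are \eqref{ineq-f-0} and the separation of Lemma \ref{sep-fm}. The first only says that each $f_{m,j}$ lies on the contact set $\{U_{K,\phi}=\phi\}$. Take $K=X$, $\phi=\mathbf 0$: the contact set is all of $X$. Then $m$ points with mutual distances $\ge\beta m^{-1/\gamma}$, packed into a disc of radius of order $m^{1/2-1/\gamma}$, satisfy both properties. Yet $\oJ_{m,\mathbf 0}=-\oE_m\ge c\log m$ for large $m$, while $\min\oI_{\mathbf 0}=0$.

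What is missing is the variational content of \eqref{ineq-f-1}. The paper sums \eqref{ineq-f-1} over $j$ and integrates against $\nu=\nu_{K,\phi}$, obtaining
\[
-\tfrac{m}{m-1}\oE_m(\mathbf f_m)+\int_X\phi\,\dd\delta_{\mathbf f_m}\le-\int_X U^\star_{\nu}\,\dd\delta_{\mathbf f_m}+\int_X\phi\,\dd\nu .
\]
Here the cross term carries a minus sign. After replacing $\delta_{\mathbf f_m}$ by $\sigma_m$, positivity for $\nu-\sigma_m$ is used in the correct direction:
\[
-2\int U^\star_\nu\,\dd\sigma_m\le-\int U^\star_\nu\,\dd\nu-\int U^\star_{\sigma_m}\,\dd\sigma_m .
\]
One then finishes with the \emph{upper} bound $\oE_m(\mathbf f_m)\le\int U^\star_{\sigma_m}\,\dd\sigma_m+C\log m/m$. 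This is the direction of Lemma \ref{prop-find-p-1}, which needs no separation. It is not the lower bound of Step 3 that you flag as the main obstacle.

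Your Step 1 can also be salvaged directly. Add the averaged \eqref{ineq-f-0} to the displayed inequality; this gives $\oK_{m,\phi}(\mathbf f_m)\le\min_{\cM(K)}\oI_\phi$. That is just the averaging bound
\[
\min_{K^m}\oK_{m,\phi}\le\int\oK_{m,\phi}\,\dd\nu^{\otimes m}=\oI_\phi(\nu).
\]
Since $\oJ_{m,\phi}-\oK_{m,\phi}=\frac{1}{m-1}\oE_m\le\frac1m\sup_{X\times X}G$, this even yields an $O(1/m)$ bound.
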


Recall that in Section  \ref{sec-com-1}, we have defined a probability measure $\sigma_{\mathbf p}$ on $X$. By a similar  way, for each $1\leq j\leq m$, we define a probability measure $\sigma_{m,j}$ as follows: consider the function
\[
V_{m,j} := \sup_\varphi  \big\{  \varphi\in \SH(X,\omega): \; \varphi \leq U^\star_{\delta_{f_{m,j}}} \,\text{ on }\, \partial  \B(f_{m,j}, m^{-2/\gamma}) \big\}.
\]
Define the probability measure $$\sigma_{m,j} := \ddc V_{m,j} + \omega,$$ which is supported on $\partial \B(f_{m,j}, m^{-2/\gamma})$. Finally, we define their average 
$$    \sigma_m:={1\over m}\sum_{1\leq j\leq m} \sigma _{m,j} .    $$
From Lemma \ref{sep-fm}, we see that when $m>(2/\beta)^\gamma$, the support of $\sigma_{m,j}$'s are disjoint with each other. Repeat the proof of Lemma \ref{lem-delta-sigma}, we get 

\begin{lemma} \label{lem-sigma-m-delta}
	For each $1\leq j \leq m$,  we have $|V_{m,j}-U^\star_{\sigma_{m,j}}|\leq Cm^{-4}\log m$ on $X$ and $|U^\star_{\sigma_{m,j}}-U^\star_{\delta_{f_{m,j}}} |\leq Cm^{-4}\log m$ on $X\setminus  \B(f_{m,j}, m^{-2/\gamma})$.
\end{lemma}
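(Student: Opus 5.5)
The plan is to repeat the argument of Lemma \ref{lem-delta-sigma} verbatim, with the radius $m^{-1/\gamma}$ replaced by $r_m:=m^{-2/\gamma}$, and then track how the error improves.

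\emph{First step.} Since $\sigma_{m,j}=\ddc V_{m,j}+\omega$ and $\ddc U^\star_{\sigma_{m,j}}=\sigma_{m,j}-\omega$, the difference $V_{m,j}-U^\star_{\sigma_{m,j}}$ is $\omega$-harmonic, hence constant. The mean normalization of $U^\star_{\sigma_{m,j}}$ identifies this constant as $\int_X V_{m,j}\,\omega$. By Lemma \ref{lem-upenvop}, $V_{m,j}=U^\star_{\delta_{f_{m,j}}}$ outside $\B(f_{m,j},r_m)$. Also $\int_X U^\star_{\delta_{f_{m,j}}}\,\omega=0$. Together these give
$$\int_X V_{m,j}\,\omega=\int_{\B(f_{m,j},r_m)}\big(V_{m,j}-U^\star_{\delta_{f_{m,j}}}\big)\,\omega.$$

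\emph{Second step: bound the integrand on the small ball.} From above, $V_{m,j}\le c_1$. This holds because $U^\star_{\delta_x}$ is uniformly bounded above, independently of $x$, and $V_{m,j}$ solves $\ddc V_{m,j}=-\omega$ inside the ball with these boundary values. From below, $V_{m,j}>U^\star_{\delta_{f_{m,j}}}\ge c_2\log\dist(\cdot,f_{m,j})$, using \eqref{lowerbound-Udelta}, which comes from Lemma \ref{l:Green}. Therefore the constant is bounded by
$$c_1\Area_\omega\big(\B(f_{m,j},r_m)\big)+2\int_{\B(f_{m,j},r_m)}|U^\star_{\delta_{f_{m,j}}}|\,\omega .$$
The first term is $O(r_m^2)=O(m^{-4/\gamma})$. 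For the second, use the local estimate $\int_{\mathbb D(0,r)}|\log|z||=O(r^2|\log r|)$ in a chart; it gives $O(m^{-4/\gamma}\log m)$. All constants are uniform in $f_{m,j}$, because the charts and the Lipschitz bound on $G-\log\dist$ are uniform on compact $X$. Since $0<\gamma\le1$, this is at most $Cm^{-4}\log m$, which proves the first claim.

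\emph{Third step.} The second claim is immediate from the first, since $V_{m,j}=U^\star_{\delta_{f_{m,j}}}$ on $X\setminus\B(f_{m,j},r_m)$.

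No real obstacle is expected. The only point needing care is that the constants $c_1,c_2$, and the implied constant in the logarithmic integral, are uniform in the center point and in $m$. This follows from compactness of $X$ and the Lipschitz property of $\varrho$ in Lemma \ref{l:Green}. The separation of the points from Lemma \ref{sep-fm} is not needed for this lemma; it matters only for the disjointness of the supports used later.
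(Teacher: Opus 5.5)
Your proposal is correct and follows the paper's route exactly. The paper proves this lemma by repeating the proof of Lemma \ref{lem-delta-sigma} with radius $m^{-2/\gamma}$ in place of $m^{-1/\gamma}$, which is what you do, ending with the bound $O(m^{-4/\gamma}\log m)\le Cm^{-4}\log m$ for $\gamma\le 1$; your remark that the separation from Lemma \ref{sep-fm} is not needed here is also right.
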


Then we get the following completion of Lemma \ref{prop-find-p-1}, with the lower bound.

\begin{lemma}\label{lem-oE-sigma}
		There exists a constant $C>0$ independent of $\phi$ and $m$ such that,
	\begin{equation*}\label{ineq-oebound}
	\Big|\oE_m(\mathbf f_m)-\int_X  U^\star_{\sigma_{m}}\,\dd \sigma_{m} \Big|\leq C {\log m \over m}  \quad\text{for all}\quad m>(2/\beta)^\gamma.
	\end{equation*}
\end{lemma}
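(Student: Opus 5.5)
Write $r:=m^{-2/\gamma}$. The plan is to compare the two quantities term by term after expanding both into sums over pairs. First,
$$\oE_m(\mathbf f_m)=\frac1{m^2}\sum_{j\neq k}U^\star_{\delta_{f_{m,j}}}(f_{m,k}).$$
Second, by the definition of $\sigma_m$,
$$\int_X U^\star_{\sigma_m}\,\dd\sigma_m=\frac1{m^2}\sum_{j\neq k}\int_X U^\star_{\sigma_{m,j}}\,\dd\sigma_{m,k}+\frac1{m^2}\sum_{k}\int_X U^\star_{\sigma_{m,k}}\,\dd\sigma_{m,k}.$$

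\textbf{Diagonal terms.} These are handled exactly as in \eqref{pkpk}. On $\supp\sigma_{m,k}=\partial\B(f_{m,k},r)$, Lemma~\ref{lem-sigma-m-delta} gives $U^\star_{\sigma_{m,k}}=U^\star_{\delta_{f_{m,k}}}+O(m^{-4}\log m)$. There, $U^\star_{\delta_{f_{m,k}}}$ is bounded above uniformly and, by \eqref{lowerbound-Udelta}, bounded below by $c_2\log r=-O(\log m)$. Hence each diagonal integral is $O(\log m)$, and the diagonal sum contributes $O(m^{-1}\log m)$.

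\textbf{Off-diagonal terms.} Fix $j\neq k$ and assume $m>(2/\beta)^\gamma$. By Lemma~\ref{sep-fm}, $\dist(f_{m,j},f_{m,k})\ge\beta m^{-1/\gamma}>2r$, so the closed balls $\overline\B(f_{m,j},r)$ and $\overline\B(f_{m,k},r)$ are disjoint. Consequently $\supp\sigma_{m,k}$ and the point $f_{m,k}$ lie in $X\setminus\B(f_{m,j},r)$, where $U^\star_{\sigma_{m,j}}=U^\star_{\delta_{f_{m,j}}}+O(m^{-4}\log m)$ by Lemma~\ref{lem-sigma-m-delta}. Using \eqref{commute-potential} and applying that lemma twice (once for $j$, once for $k$) gives
$$\int_X U^\star_{\sigma_{m,j}}\,\dd\sigma_{m,k}=\int_X U^\star_{\delta_{f_{m,j}}}\,\dd\sigma_{m,k}+O(m^{-4}\log m)=\int_X U^\star_{\sigma_{m,k}}\,\dd\delta_{f_{m,j}}+O(m^{-4}\log m)=U^\star_{\delta_{f_{m,k}}}(f_{m,j})+O(m^{-4}\log m).$$
In the second step we converted $\int U^\star_{\delta_{f_{m,j}}}\dd\sigma_{m,k}$ into $\int U^\star_{\sigma_{m,k}}\dd\delta_{f_{m,j}}=U^\star_{\sigma_{m,k}}(f_{m,j})$. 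Since $f_{m,j}\notin\B(f_{m,k},r)$, this equals $U^\star_{\delta_{f_{m,k}}}(f_{m,j})=G(f_{m,k},f_{m,j})$ up to the same error. Summing over the $m(m-1)$ ordered pairs and dividing by $m^2$, the total off-diagonal error is $O(m^{-4}\log m)$. This leaves the two-sided bound $O(m^{-1}\log m)$, coming only from the diagonal terms.

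The one point needing care is that the error in Lemma~\ref{lem-sigma-m-delta} is uniform in $j$, $m$ and the position of $f_{m,j}$. This follows by rerunning the proof of Lemma~\ref{lem-delta-sigma} with radius $m^{-2/\gamma}$, which yields an error $O(m^{-4/\gamma}\log m)\le O(m^{-4}\log m)$. The separation from Lemma~\ref{sep-fm} is the key input: without it the off-diagonal comparison fails, which is why only an upper bound was available in Lemma~\ref{prop-find-p-1}. Everything else is bookkeeping.
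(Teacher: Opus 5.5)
Your proof is correct and follows essentially the same route as the paper. You use Lemma \ref{sep-fm} to make the closed balls $\overline\B(f_{m,j},m^{-2/\gamma})$ disjoint, then match each off-diagonal term $\int_X U^\star_{\sigma_{m,j}}\,\dd\sigma_{m,k}$ with $G(f_{m,j},f_{m,k})$ up to $O(m^{-4}\log m)$ via Lemma \ref{lem-sigma-m-delta} and \eqref{commute-potential}, and bound the diagonal terms as in \eqref{pkpk}; the only difference is that you run the approximation chain from the double integral toward the Green function rather than the reverse.
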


\begin{proof}
The assumption $m>(2/\beta)^\gamma$  ensures that $\overline\B(f_{m,j},m^{-2/\gamma})$'s are disjoint with each other.
	For $j\neq k$,  Lemma \ref{lem-sigma-m-delta} says that
	$$ |U^\star_{\sigma_{m,j}}-U^\star_{\delta_{f_{m,j}}} |\leq Cm^{-4}\log m  \quad\text{on}\quad  \overline\B(f_{m,k},m^{-2/\gamma}). $$
	 Using  \eqref{commute-potential}, we have
	\begin{align*} 
	U^\star_{\delta_{f_{m,j}}}(f_{m,k})&= U^\star_{\sigma_{m,j}}(f_{m,k}) + O(m^{-4}\log m)= \int_X U^\star_{\sigma_{m,j}} \,\dd  \delta_{f_{m,k}}+ O(m^{-4}\log m) \\
	&=\int_X U^\star_{ \delta_{f_{m,k}}} \,\dd  \sigma_{m,j}+ O(m^{-4}\log m)=\int_X U^\star_{\sigma_{m,k}} \,\dd  \sigma_{m,j}+ 2O(m^{-4}\log m). 
	\end{align*}
	It follows from \eqref{exp-oE} that 
	$$ \oE_m(\mathbf f_m)={1\over m^2}\sum_{j\neq k}\int_X U^\star_{\sigma_{m,k}} \,\dd  \sigma_{m,j}+ O(m^{-4}\log m).    $$

	By definition of $\sigma_m$,
	$$\int_X U^\star_{\sigma_{m}}\,\dd \sigma_{m}  ={1\over m^2}\sum_{j,k}\int_X U^\star_{\sigma_{m,j}} \,\dd \sigma_{m,k}.$$
	Thus, we conclude that 
	$$ \Big|\oE_m(\mathbf f_m)-\int_X  U^\star_{\sigma_{m}}\,\dd \sigma_{m} \Big|\leq \Big| {1\over m^2} \sum_{1\leq j\leq m}\int_X U^\star_{\sigma_{m,j}} \,\dd \sigma_{m,j}\Big| + O(m^{-4}\log m).$$
The lemma just follows from \eqref{pkpk}.
\end{proof}

\begin{lemma}\label{lem-f-sigma}
	There exists a constant $c>0$ independent of $\phi$ and $m$ such that for all $m>1$,
	$$\Big| \int_X  U^\star_{\nu_{K,\phi}} \,\dd \delta_{\mathbf{f}_m}  - \int_X  U^\star_{\nu_{K,\phi}} \,\dd \sigma_m    \Big|\leq c \norm{\phi}_{\Cc^\gamma}m^{-2}.$$
\end{lemma}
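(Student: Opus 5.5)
The plan is to reduce the estimate to a pointwise comparison on each small circle $\partial\B(f_{m,j},m^{-2/\gamma})$, exactly as in the second assertion of Lemma \ref{prop-find-p-1}. Since $\delta_{\mathbf f_m}$ and $\sigma_m$ are both averages of $m$ pieces, we write
$$\int_X U^\star_{\nu_{K,\phi}}\,\dd\delta_{\mathbf f_m}-\int_X U^\star_{\nu_{K,\phi}}\,\dd\sigma_m={1\over m}\sum_{1\leq j\leq m}\Big(U^\star_{\nu_{K,\phi}}(f_{m,j})-\int_X U^\star_{\nu_{K,\phi}}\,\dd\sigma_{m,j}\Big).$$
It then suffices to bound each summand by $c\norm{\phi}_{\Cc^\gamma}m^{-2}$.

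For a fixed $j$, recall that $\sigma_{m,j}=\ddc V_{m,j}+\omega$ is a probability measure supported on $\partial\B(f_{m,j},m^{-2/\gamma})$. Hence every point in its support is at distance exactly $m^{-2/\gamma}$ from $f_{m,j}$. This gives
$$\Big|U^\star_{\nu_{K,\phi}}(f_{m,j})-\int_X U^\star_{\nu_{K,\phi}}\,\dd\sigma_{m,j}\Big|\leq\int_X\big|U^\star_{\nu_{K,\phi}}(f_{m,j})-U^\star_{\nu_{K,\phi}}(z)\big|\,\dd\sigma_{m,j}(z)\leq \norm{U^\star_{\nu_{K,\phi}}}_{\Cc^\gamma}\,(m^{-2/\gamma})^\gamma.$$
The right-hand side equals $\norm{U^\star_{\nu_{K,\phi}}}_{\Cc^\gamma}\,m^{-2}$.

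It remains to control the H\"older seminorm of $U^\star_{\nu_{K,\phi}}$ by $\norm{\phi}_{\Cc^\gamma}$. By Proposition \ref{sol-min}, $U^\star_{\nu_{K,\phi}}$ differs from $U_{K,\phi}$ by a constant, so the two have the same H\"older seminorm. Lemma \ref{lem-UK} then gives $\norm{U_{K,\phi}}_{\Cc^\gamma}\leq c\norm{\phi}_{\Cc^\gamma}$ with $c$ independent of $\phi$; this is the same bound already used in \eqref{UzlessUf}. Averaging over $j$ yields the claim.

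The only point needing care is that the seminorm, not the full $\Cc^\gamma$ norm including the sup of $U^\star_{\nu_{K,\phi}}$, is what enters. This is harmless because constants cancel in differences. No separation of the points $f_{m,j}$ (Lemma \ref{sep-fm}) is needed, so the estimate holds for every $m>1$.
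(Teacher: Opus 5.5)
Your proposal is correct and follows the paper's proof: you split into the $m$ pieces, use that every point of $\supp(\sigma_{m,j})$ lies at distance $m^{-2/\gamma}$ from $f_{m,j}$, and invoke the H\"older bound from Lemma \ref{lem-UK}. Your remark that only the H\"older seminorm enters, so the additive constant relating $U^\star_{\nu_{K,\phi}}$ and $U_{K,\phi}$ plays no role, is a small extra precision that the paper passes over.
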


\begin{proof}
	By definition, the left hand side is
	\begin{align*}
	{1\over m}\Big| \sum_{1\leq j\leq m}\int_X  U^\star_{\nu_{K,\phi}} \,\dd \delta_{f_{m,j}}  -\sum_{1\leq j\leq m} \int_X  U^\star_{\nu_{K,\phi}} \,\dd \sigma_{m,j}    \Big|.
	\end{align*}
	For each $j$, the distance from every point in $\supp(\sigma_{m,j})$ to $f_{m,j}$ is $m^{-2/\gamma}$. The lemma will follow from the fact that $\norm{U^\star_{\nu_{K,\phi}}}_{\Cc^\gamma}\leq c\norm{\phi}_{\Cc^\gamma}$ in Lemma \ref{lem-UK}.
\end{proof}

\begin{proof}[Proof of Proposition \ref{prop-lower}]
	Summing up all the $m$ inequalities in \eqref{ineq-f-1}, yields
	$$-\sum_{1\leq j\leq m}U^\star_{\delta_{\mathbf f'_{m,j}}}(f_{m, j}) +\sum_{1\leq j\leq m} \phi(f_{m, j}) \leq -\sum_{1\leq j\leq m}U^\star_{\delta_{\mathbf f'_{m,j}}}(z) + m\phi(z)  \quad\text{for all }\, z \in K.$$
	This is equivalent to (c.f.\ \eqref{exp-oE})
	$$  -{m^2\over m-1}\oE_m(\mathbf f_m)+m   \int_X \phi \,\dd \delta_{\mathbf f_m}  \leq - m  U^\star_{\delta_{\mathbf f_m}}(z) +m\phi(z) \quad\text{for all }\, z \in K.$$
	Integrating it against $\nu_{K,\phi}$ (since $\supp(\nu_{K,\phi})\subset K$) gives
	$$ -{m^2\over m-1}\oE_m(\mathbf f_m)+ m  \int_X \phi \,\dd \delta_{\mathbf f_m}  \leq - m \int_X U^\star_{\delta_{\mathbf f_m}} \,\dd\nu_{K,\phi} +m\int_X\phi \,\dd\nu_{K,\phi} . $$
	Multiply by ${m-1\over m^2}$ on both sides and using \eqref{commute-potential}, we get
$$  -\oE_m (\mathbf f_m)+ {m-1\over m}\int_X \phi \,\dd \delta_{\mathbf f_m} \leq  -{m-1\over m} \int_X U^\star_{\nu_{K,\phi}}\,\dd \delta_{\mathbf f_m} +{m-1\over m}\int_X \phi \,\dd \nu_{K,\phi}.  $$
Since $\norm{U^\star_{\nu_{K,\phi}}}_{\Cc^\gamma}\leq c\norm{\phi}_{\Cc^\gamma}$ by Lemma \ref{lem-UK}, the three integrals in the inequality are all bounded independent of $m$. Therefore,
$$ -\oE_m (\mathbf f_m)+  \int_X \phi \,\dd \delta_{\mathbf f_m}\leq -\int_X U^\star_{\nu_{K,\phi}}\,\dd \delta_{\mathbf f_m} + \int_X \phi \,\dd \nu_{K,\phi}+O(\norm{\phi}_{\Cc^\gamma})m^{-1}.  $$
Applying Lemma \ref{lem-f-sigma} and multiplying by a factor $2$, we get
\begin{equation} \label{eq5.2}
-2\oE_m (\mathbf f_m)+ 2 \int_X \phi \,\dd \delta_{\mathbf f_m}\leq -2\int_X U^\star_{\nu_{K,\phi}}\,\dd \sigma_m +2 \int_X \phi \,\dd \nu_{K,\phi} +O(\norm{\phi}_{\Cc^\gamma})m^{-1}.   
\end{equation}

On the other hand, applying \eqref{positive energy} to the signed measure $\nu_{K,\phi}-\sigma_m$, yields
$$ -2\int_X U^\star_{\nu_{K,\phi}}\,\dd  \sigma_m \leq  -\int_X U^\star_{\nu_{K,\phi}}\,\dd \nu_{K,\phi} - \int_X U^\star_{\sigma_m}\,\dd \sigma_m. $$
Hence, together with \eqref{eq5.2}, we conclude that  
$$-2\oE_m (\mathbf f_m)+ 2 \int_X \phi \,\dd \delta_{\mathbf f_m} \leq   -\int_X U^\star_{\nu_{K,\phi}}\,\dd \nu_{K,\phi} - \int_X U^\star_{\sigma_m}\,\dd \sigma_m +2 \int_X \phi \,\dd \nu_{K,\phi}+ O(\norm{\phi}_{\Cc^\gamma})m^{-1}. $$
By invoking Lemma \ref{lem-oE-sigma}, and the fact that $\beta$ depends on $\norm{\phi}_{\Cc^\gamma}$, we finish the proof of Proposition \ref{prop-lower}.
\end{proof}

\section{Riemann-Theta section}\label{sec-rie}

In the decomposition of $|\det S|_h$ in Lemma \ref{boson}, besides the exponential of the Green functions, the Riemann-Theta section also appears. Put
$$ \oR_m(x_1,\dots,x_m):= \Big\|\widetilde\theta\big(  \oA_{n}(L^{m+g-1})-\sum_{1\leq j\leq m}\oA_1(x_j) -\mathbf z_\star \big)\Big\|_{\fh_J}^2. $$
Our aim is to  find the upper bound of $|\det S|_h$.
In Section \ref{sec-com-2}, for every $m$, we have found a special point $\mathbf f_m\in K^m$, such that the value of $\oJ_{m,\phi}(\mathbf f_m)$ is small enough, which is close to the minimal value of the functional $\oI_\phi$ over $\cM(K)$.   However, it may happen that the value of $\oR_m(\mathbf f_m)$ is quite close to zero, in which case, the value of $|\det S|_h$ will also be quite small. In this section, we will do a perturbation with the point $\mathbf f_m$, getting a new point $\mathbf F_m\in K^m$, such that the value $\oR_m(\mathbf F_m)$ is bounded uniformly from below. Moreover, the new functional value $\oJ_{m,\phi}(\mathbf F_m)$ does not oscillate too much.

\begin{lemma}\label{lem-pet-fekete}
	 There exist constants $\alpha,\kappa>0$ independent of $m$ such that for any $\mathbf Z \in \mathrm{Jac}(X)$, any $m>1$ and any $m$ points $x_1,x_2,\dots,x_m\in X$, one can find $\mathbf y:=(y_1,y_2,\dots,y_g)\in  K^g$, satisfying the following inequalities:
	$$  \dist(y_j,x_\ell) \geq \alpha/\sqrt m \quad\text{for all}\quad j,\ell,$$
		$$   \dist(y_j,y_k)\geq \alpha  \quad\text{for}\quad  j\neq k,        $$ 
	$$ \big \|\widetilde\theta\big(  -\oA_g(\mathbf y)+\mathbf Z \big)\big\|_{\fh_J}^2   \geq \kappa.  $$
\end{lemma}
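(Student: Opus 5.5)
The plan is a compactness argument combined with a measure (volume) argument in $K^g$. Consider the holomorphic map $\Psi_{\mathbf Z}:X^g\to\mathrm{Jac}(X)$, $\mathbf y\mapsto \mathbf Z-\oA_g(\mathbf y)$. The function $F_{\mathbf Z}(\mathbf y):=\|\widetilde\theta(\mathbf Z-\oA_g(\mathbf y))\|_{\fh_J}^2$ vanishes exactly on $\Psi_{\mathbf Z}^{-1}(\Theta)$. The first step is to show that this zero set is a proper analytic subset of $X^g$ for every $\mathbf Z$. The reason is that $\oA_g$ is surjective and generically finite (its critical set is $\W$), while $\Theta$ is a divisor. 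Hence $\oA_g^{-1}(\mathbf Z-\Theta)$ has complex codimension at least one in $X^g$. It therefore cannot contain the set $K^g$, which has nonempty interior, since $K$ is the closure of an open set.

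Next I make this quantitative and uniform in $\mathbf Z$. Fix an open set $U\subset \mathrm{int}(K)$ with $\overline U$ a small closed disc. Choose $g$ discs $D_1,\dots,D_g\subset U$ whose mutual distances are at least $3\alpha_0$, for some fixed $\alpha_0>0$. Set $P:=D_1\times\cdots\times D_g$. The function $(\mathbf Z,\mathbf y)\mapsto \log F_{\mathbf Z}(\mathbf y)$ is, locally on $\mathbf y$, the log of the modulus of a holomorphic function (theta times a nonvanishing factor) plus a smooth function, and it depends continuously on $\mathbf Z$ in $L^1_{loc}$. Since $\mathrm{Jac}(X)$ is compact and no $\log F_{\mathbf Z}$ is identically $-\infty$ on $P$, the standard compactness of quasi-psh families gives uniform control. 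More precisely, there are constants $c_0$ and $A$ with
$$\mathrm{Leb}\big(\{\mathbf y\in P:\, F_{\mathbf Z}(\mathbf y)<e^{-t}\}\big)\leq A e^{-c_0 t}$$
uniformly in $\mathbf Z$, by the uniform Skoda/Hörmander integrability estimate. Alternatively, a simpler contradiction argument gives this: if $\sup_P F_{\mathbf Z_k}\to 0$, pass to a limit $\mathbf Z_k\to\mathbf Z$, so that $F_{\mathbf Z}\equiv0$ on $P$, a contradiction. Combined with the sublevel estimate, this yields $\kappa_0$ and $\eta>0$ such that the good set $\{F_{\mathbf Z}\ge\kappa_0\}\cap P$ has measure at least $\eta$ for every $\mathbf Z$.

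The last step handles the points $x_\ell$. The bad set
$$\{\mathbf y\in P:\ \dist(y_j,x_\ell)<\alpha/\sqrt m \text{ for some } j,\ell\}$$
has measure at most $g\cdot m\cdot C(\alpha/\sqrt m)^2\cdot \vol(P)=O(\alpha^2)$, uniformly in $m$ and in the $x_\ell$. Choosing $\alpha\le\alpha_0$ small enough that this is less than $\eta$, some good $\mathbf y$ survives. For that $\mathbf y$ we get $\dist(y_j,y_k)\ge 3\alpha_0\ge\alpha$, the required distance $\alpha/\sqrt m$ from every $x_\ell$, and $F_{\mathbf Z}(\mathbf y)\ge\kappa:=\kappa_0$.

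The main obstacle is the uniform lower bound on the measure of $\{F_{\mathbf Z}\ge\kappa_0\}\cap P$ over all $\mathbf Z$. I would do this by contradiction: take $\mathbf Z_k$ and $\kappa_k\to0$ with the measure tending to zero, extract $\mathbf Z_k\to\mathbf Z$, and use the uniform convergence $F_{\mathbf Z_k}\to F_{\mathbf Z}$ on $P$ (theta is continuous). Then $F_{\mathbf Z}=0$ almost everywhere on $P$, hence everywhere on $P$, which contradicts the properness shown in the first step.
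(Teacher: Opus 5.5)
Your argument is correct, but it is organized differently from the paper's proof.

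\textbf{The paper's route.} For each $\mathbf Z$ the paper picks a ball $\mathbf B\subset K^g$ of diameter $\alpha_1$ at distance at least $\alpha_1$ from $\W\cup\oA_g^{-1}(\mathbf Z-\Theta)$. It asserts, without proof, that $\alpha_1$ can be chosen independent of $\mathbf Z$. It then takes separated balls $B_j$ inside the coordinate projections of $\mathbf B$ and chooses each $y_j\in B_j$ separately, by an area count in $X$: $m$ discs of radius $\alpha_3/\sqrt m$ cannot cover $B_j$. The lower bound on the theta norm comes from the distance of $\oA_g(\mathbf y)$ to $\mathbf Z-\Theta$ in $\mathrm{Jac}(X)$, using that $\oA_g$ is a local biholomorphism off $\W$.

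\textbf{Your route.} You fix one product domain $P=D_1\times\cdots\times D_g$ independent of $\mathbf Z$. Compactness of $\mathrm{Jac}(X)$, joint continuity of $F_{\mathbf Z}(\mathbf y)$, and the identity principle show that $\{F_{\mathbf Z}\ge\kappa_0\}$ occupies a fixed positive measure $\eta$ of $P$ for every $\mathbf Z$. Surjectivity of $\oA_g$ is what makes $\oA_g^{-1}(\mathbf Z-\Theta)$ nowhere dense here. You then discard the set of measure $O(\alpha^2)$ where some $y_j$ lies near some $x_\ell$.

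\textbf{Comparison.} Your good set is not a product, so you must count measure in $X^g$ rather than choose coordinates one at a time. In exchange:
\begin{itemize}
\item you never need $\W$;
\item the separation $\dist(y_j,y_k)\ge\alpha$ is automatic from the choice of the $D_j$;
\item the uniformity in $\mathbf Z$, which the paper only asserts, is actually proved.
\end{itemize}

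\textbf{Two small corrections.}
\begin{itemize}
\item Your contradiction argument in the last paragraph is self-contained. The appeal to a uniform Skoda-type sublevel estimate is therefore unnecessary and can be dropped.
\item The bad-set bound should read $g\,m\,C(\alpha/\sqrt m)^2\max_j\prod_{i\neq j}\vol(D_i)$ rather than $g\,m\,C(\alpha/\sqrt m)^2\vol(P)$. This is still $O(\alpha^2)$ uniformly in $m$ and the $x_\ell$, so the conclusion is unaffected.
\end{itemize}
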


\begin{proof}
	
	Recall that $\Theta\subset \mathrm{Jac}(X)$ is the zero locus of $\widetilde\theta$ and $\W$ is the critical set of $\oA_g$. Clearly, $\W\cup \oA_g^{-1}(\mathbf Z-\Theta)$ is an analytic subset of $X^g$.   Since $K$ is the closure of an open set, we can fix an open ball $\mathbf B\subset K^g$ such that 
	$$ \diam(\mathbf B)= \alpha_1,\quad\text{and}\quad \dist\big( \mathbf B  \, , \, \W\cup\oA_g^{-1}( \mathbf Z-\Theta)           \big)\geq \alpha_1 \quad \text{ in }\,X^g  $$
	for some small $\alpha_1>0$ independent of $m,\mathbf Z$. Then we take   $B_j,1\leq j\leq g$ in the $j$-th projection of $\mathbf B$ such that $\diam(B_j)=\alpha_2$ and $\dist(B_j,B_k)\geq \alpha_2$ for $j\neq k$. This    $\alpha_2>0$   only depends on $\alpha_1$.

	For each $1\leq j\leq g$, by an area computation argument, there exists some small $\alpha_3>0$, such that the open balls $\B(x_\ell,\alpha_3/\sqrt m),1\leq \ell \leq m$ cannot cover $B_j$. So, we can take a point
	$$y_j\in B_j\setminus \cup_{1\leq \ell \leq m}\B(x_\ell ,\alpha_3/\sqrt m).$$
	
	Now we check that the $y_k$'s satisfy the three requirements. The first one follows from the definition of $y_j$ with $\alpha=\alpha_3$. The second one also follows from the definition of $y_j$ with $\alpha=\alpha_2$. For the last one,  since $\dist(\mathbf y,\W)\geq \dist(\mathbf B,\W)\geq \alpha_1$, the open ball $\B(\mathbf y, \alpha_1/2)$ is biholomorphic to $\oA_g(\B(\mathbf y, \alpha_1/2))$, whose diameter in $\mathrm{Jac}(X)$ is larger than some $\alpha_4>0$. Using that $\dist \big(\mathbf y,\oA_g^{-1}( \mathbf Z-\Theta) \big)\geq \dist \big(\mathbf B,\oA_g^{-1}( \mathbf Z-\Theta) \big)\geq\alpha_1$, we see that in $\mathrm{Jac}(X)$,
	 $$\dist \big(- \oA_g(\mathbf y)+\mathbf Z,\Theta\big)=\dist \big( -\oA_g(\mathbf y),\Theta-\mathbf Z\big)=\dist \big( \oA_g(\mathbf y),\mathbf Z-\Theta\big)\geq \alpha_5$$ for some $\alpha_5>0$. This says the value  $\big \|\widetilde\theta\big( - \oA_g(\mathbf y)+\mathbf Z \big)\big\|_{\fh_J}^2 $ is bounded from below by some constant $\kappa>0$ because $\Theta$ is the zero locus of $\widetilde\theta$. So we finish the proof of the lemma by taking $\alpha:=\min(\alpha_2,\alpha_3)$.
\end{proof}

\begin{lemma} \label{lem-new-fekete}
	For every $m>g$, there exist $\alpha,\kappa>0$ independent of $m$ and a point $\mathbf F_m :=(F_{m,1},\dots,F_{m,m})\in K^m$ such that $$F_{m,\ell} =f_{m,\ell} \quad\text{for}\quad g+1\leq \ell\leq m ,$$
	$$ \dist(F_{m,j},F_{m,\ell}) \geq  \alpha/\sqrt m \quad\text{for}\quad   1\leq j\leq g,\, g+1\leq \ell\leq m,    $$
	$$ \dist(F_{m,j},F_{m,k})\geq \alpha\quad\text{for}\quad  1\leq j\neq k\leq g,$$
$$\oR_m(\mathbf F_m)\geq \kappa.$$
\end{lemma}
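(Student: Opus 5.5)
This is a direct application of Lemma \ref{lem-pet-fekete}. We keep the last $m-g$ points of the minimizer $\mathbf f_m$ fixed and replace the first $g$ points by points supplied by that lemma. The vector $\mathbf Z$ is chosen so that the theta argument in $\oR_m$ becomes exactly $-\oA_g(\mathbf y)+\mathbf Z$.

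First, fix the $m-g$ points $x_{\ell}:=f_{m,\ell+g}$ for $1\leq \ell\leq m-g$. All of them lie in $K$. Set
$$\mathbf Z:=\oA_{n}(L^{m+g-1})-\sum_{g+1\leq \ell\leq m}\oA_1(f_{m,\ell})-\mathbf z_\star\in \mathrm{Jac}(X),$$
with the same convention for $\oA_n(L^{m+g-1})$ as in the definition of $\oR_m$ (the Abel--Jacobi image of a divisor representing $L^{m+g-1}$).

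Next, apply Lemma \ref{lem-pet-fekete} to this $\mathbf Z$ and to the $m-g$ points $x_1,\dots,x_{m-g}$. That lemma is stated for $m$ points, so we need it for $m-g$ points with the bound $\alpha/\sqrt m$. This is harmless: we may pad the list with $g$ arbitrary extra points to get $m$ points, or note that the covering argument only requires at most $m$ balls. The lemma gives $\mathbf y=(y_1,\dots,y_g)\in K^g$ with three properties: $\dist(y_j,f_{m,\ell})\geq \alpha/\sqrt m$ for all $j$ and all $\ell\geq g+1$; $\dist(y_j,y_k)\geq\alpha$ for $j\neq k$; and $\|\widetilde\theta(-\oA_g(\mathbf y)+\mathbf Z)\|^2_{\fh_J}\geq\kappa$. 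The constants $\alpha,\kappa$ are independent of $m$ and of $\mathbf Z$.

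Then define $F_{m,j}:=y_j$ for $1\leq j\leq g$ and $F_{m,\ell}:=f_{m,\ell}$ for $\ell\geq g+1$. This gives $\mathbf F_m\in K^m$, and the first three requirements are immediate from the properties above. For the last requirement, $\oA_m(\mathbf F_m)=\oA_g(\mathbf y)+\sum_{\ell\geq g+1}\oA_1(f_{m,\ell})$, because $\oA_m$ is additive. Hence
$$\oA_{n}(L^{m+g-1})-\sum_{1\leq j\leq m}\oA_1(F_{m,j})-\mathbf z_\star=-\oA_g(\mathbf y)+\mathbf Z,$$
and therefore $\oR_m(\mathbf F_m)\geq\kappa$.

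The only point needing care is that the constants in Lemma \ref{lem-pet-fekete} are uniform in $\mathbf Z$, so they do not depend on $m$ through the choice of $\mathbf Z$. The lemma asserts exactly this uniformity. If one doubted it, one would justify it by compactness of $\mathrm{Jac}(X)$: the sets $\oA_g^{-1}(\mathbf Z-\Theta)$ are proper analytic subsets depending continuously on $\mathbf Z$, so a ball $\mathbf B$ at uniform distance from them can be found by a finite covering of $\mathrm{Jac}(X)$. The other minor issue, the point count $m-g$ versus $m$, is handled by the padding remark above.
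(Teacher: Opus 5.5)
Your proof is correct and is essentially the paper's argument. The paper applies Lemma \ref{lem-pet-fekete} with the same $\mathbf Z$ and with $(x_1,\dots,x_m)=\mathbf f_m$ itself, which is exactly one instance of your padding. It then sets $\mathbf F_m:=(y_1,\dots,y_g,f_{m,g+1},\dots,f_{m,m})$, and your additivity check reducing the theta argument to $-\oA_g(\mathbf y)+\mathbf Z$ is what gives $\oR_m(\mathbf F_m)\geq\kappa$.
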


\begin{proof}
	Applying Lemma \ref{lem-pet-fekete} with 
	$$(x_1,\dots,x_m)=\mathbf f_m,\quad \mathbf Z=  \oA_{n}(L^{m+g-1})-\sum_{g+1\leq j\leq m}\oA_1(f_{m,j})  -\mathbf z_\star,$$
	we will get a point $\mathbf y\in K^g$. Let $\mathbf F_m:=(y_1,\dots,y_g,f_{m,g+1}\dots,f_{m,m})$, which gives all the desired inequalities.
\end{proof}

We conclude  from Lemmas \ref{sep-fm} and  \ref{lem-new-fekete} that for $j\neq k$, 
  $$ \dist(f_{m,j} ,f_{m,k})\geq   \beta m^{-1/\gamma}  \quad\text{and}\quad  \dist(F_{m,j} ,F_{m,k})\geq \min(\alpha,\beta)  m^{-1/\gamma}.     $$
After decreasing the value of $\beta$, we will still assume
\begin{equation}\label{sep-FM}
\dist(F_{m,j} ,F_{m,k})\geq \beta  m^{-1/\gamma} \quad\text{for }\, j\neq k.
\end{equation}

\begin{proposition}\label{prop-Fmfm}
	There exists a constant $C>0$ independent of $\phi$ and $m$, but depending on $\norm{\phi}_{\Cc^\gamma}$, such that for all $m>g$,
	$$  \big|\oJ_{m,\phi}(\mathbf F_m)-\oJ_{m,\phi}(\mathbf f_m) \big|\leq C{\log m\over m}.      $$
\end{proposition}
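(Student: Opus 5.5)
Since $\mathbf F_m$ and $\mathbf f_m$ differ only in the first $g$ coordinates, I will write out the difference of the two functionals directly. With $\oJ_{m,\phi}(\mathbf p)=-\frac1{m^2}\sum_{j\neq k}G(p_j,p_k)+\frac2m\sum_j\phi(p_j)$, only the pairs $(j,k)$ with at least one index in $\{1,\dots,g\}$ change. The $\phi$-part contributes at most $\frac{2}{m}\cdot 2g\sup|\phi|=O(\norm{\phi}_{\Cc^\gamma}/m)$. For the energy part it suffices to show
$$\frac1{m^2}\sum_{1\le j\le g}\ \sum_{k\neq j}\big(|G(F_{m,j},F_{m,k})|+|G(f_{m,j},f_{m,k})|\big)=O\Big(\frac{\log m}{m}\Big).$$
I will bound the two configurations separately and not compare them term by term.

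For each fixed $j\le g$ there are fewer than $m$ terms. By Lemma \ref{l:Green}, $G(x,y)=\log\dist(x,y)+\varrho(x,y)$ with $\varrho$ bounded, so $G$ is bounded above on $X\times X$. This gives $|G(x,y)|\le C+|\log\dist(x,y)|$, and since $\dist\le\diam(X)$, it reduces to bounding $-\log\dist$ from above.

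For $\mathbf f_m$, Lemma \ref{sep-fm} gives $\dist(f_{m,j},f_{m,k})\ge\beta m^{-1/\gamma}$. Hence each term is at most $C+\gamma^{-1}\log m+|\log\beta|$. The sum over $k$ is then $O(m\log m)$, and after the prefactor $m^{-2}$ and the sum over the $g$ values of $j$ we get $O(g\log m/m)$.

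For $\mathbf F_m$, the pairs $j,k\le g$ with $j\neq k$ are at distance $\ge\alpha$, so they give $O(1)$. The pairs $j\le g<k$ satisfy $\dist\ge\alpha/\sqrt m$ by Lemma \ref{lem-new-fekete}, so each term is $O(\log m)$. Alternatively, \eqref{sep-FM} gives the bound $O(\gamma^{-1}\log m)$ directly. Summing over $k$ and multiplying by $m^{-2}$ again yields $O(\log m/m)$.

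The constant depends on $\norm{\phi}_{\Cc^\gamma}$ only through $\beta$ and $\sup|\phi|$. For the sup norm I use that $\phi$ can be taken bounded by its Hölder norm, as is implicit in the paper's conventions. I do not expect a genuine obstacle. The one point needing care is to use only \emph{lower} bounds on the distances, via the uniform separation of Lemma \ref{sep-fm} and \eqref{sep-FM}, rather than any finer cancellation, since the crude bound $\log m$ per term times $gm$ terms is already of the right order.
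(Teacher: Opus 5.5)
Your proposal is correct and is essentially the paper's own argument. You bound the $\phi$-part by $O(g\norm{\phi}_\infty/m)$, observe that only the $O(gm)$ Green-function terms with an index $\le g$ change, and bound each of those by $O(\log m)$ using the separation $\dist\geq \beta m^{-1/\gamma}$ from Lemma \ref{sep-fm} and \eqref{sep-FM}; your displayed sum over $j\le g$, $k\neq j$ omits the symmetric pairs with $k\le g<j$, but since $G$ is symmetric this only costs a factor $2$.
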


\begin{proof}
	Notice that $\mathbf F_m$ and $\mathbf f_m$ only differ by $g$ coordinates. So,
	$$  \Big|\int_X \phi \,\dd \delta_{\mathbf F_m} -\int_X \phi \,\dd\delta_{\mathbf f_m}\Big| =\Big|{1\over m}  \sum_{1\leq j\leq g} \big(\phi(F_{m,j})-\phi(f_{m,j}) \big) \Big| \leq {2g\over m}\norm{\phi}_\infty. $$ 
	It remains to estimate $\oE_m(\mathbf F_m)-\oE_m(\mathbf f_m)$. By definition it is equal to
	$$ {1\over m^2}\sum_{1\leq j\neq k\leq m} \Big(G(F_{m,j},F_{m,k}) -G(f_{m,j},f_{m,k}) \Big).  $$
    There are $m(m-1)$ terms in the summation. When $j$ and $k$ are both larger than $g$, 
   $G(F_{m,j},F_{m,k}) -G(f_{m,j},f_{m,k})=0$. So, only $m(m-1)-(m-g)(m-g-1)$ terms are nonzero. Moreover, we know that for $j\neq k$, 
   $ \dist(f_{m,j} ,f_{m,k}),\dist(F_{m,j} ,F_{m,k})$ are both larger than $\beta  m^{-1/\gamma}$.
    It follows that $|G(F_{m,j},F_{m,k})|$ and $|G(f_{m,j},f_{m,k})|$ are both $O(\log m)$. Thus, we conclude that 
   $$ \big| \oE_m(\mathbf F_m)-\oE_m(\mathbf f_m) \big|=O(m^{-1}\log m). $$
   This finishes the proof of the proposition.
\end{proof}

The following result will be used later.

\begin{lemma}\label{lem-omega}
	If $\mathbf p=(p_1,\dots,p_m)\in X^m$ satisfying $\dist(p_j,F_{m,j})\leq  m^{-2/\gamma}$ for all $1\leq q\leq m$, then there is a constant $C>0$ independent of $\phi$ and $m$, but depending on $\norm{\phi}_{\Cc^\gamma}$, such that
		$$  \big|\oJ_{m,\phi}(\mathbf p) -\oJ_{m,\phi}(\mathbf F_m) \big|\leq C m^{-1}   \quad\text{for all }\, m>1.      $$
		and $\oR_m(\mathbf p)\geq \kappa/2$, where $\kappa$ is the constant in Lemma \ref{lem-new-fekete}.
\end{lemma}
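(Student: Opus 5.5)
The plan is to treat the two terms of $\oJ_{m,\phi}$ separately, and then handle $\oR_m$ by continuity of the theta section. Throughout, the key input is the separation \eqref{sep-FM}: distinct points of $\mathbf F_m$ are at distance at least $\beta m^{-1/\gamma}$. Since each $p_j$ moves by at most $m^{-2/\gamma}$, which is much smaller than $\beta m^{-1/\gamma}$ for $m$ large, we get $\dist(p_j,p_k)\geq \tfrac{\beta}{2}m^{-1/\gamma}$ for $j\neq k$. For the finitely many small $m$ the estimate is absorbed into the constant; if needed one notes that $\mathbf p$ is still a configuration of distinct points with a uniform separation.

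\textbf{The $\phi$-term.} By $\gamma$-H\"older continuity,
$$\Big|\int_X\phi\,\dd\delta_{\mathbf p}-\int_X\phi\,\dd\delta_{\mathbf F_m}\Big|\leq \norm{\phi}_{\Cc^\gamma}(m^{-2/\gamma})^\gamma=\norm{\phi}_{\Cc^\gamma}m^{-2}.$$

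\textbf{The energy term.} Write $G(x,y)=\log\dist(x,y)+\varrho(x,y)$ with $\varrho$ Lipschitz (Lemma \ref{l:Green}). The $\varrho$-part of each difference $G(p_j,p_k)-G(F_{m,j},F_{m,k})$ is $O(m^{-2/\gamma})$. For the logarithmic part, the triangle inequality gives
$$\big|\dist(p_j,p_k)-\dist(F_{m,j},F_{m,k})\big|\leq 2m^{-2/\gamma},$$
and $\dist(F_{m,j},F_{m,k})\geq\beta m^{-1/\gamma}$. Hence
$$\Big|\log\frac{\dist(p_j,p_k)}{\dist(F_{m,j},F_{m,k})}\Big|\leq C\,\frac{m^{-2/\gamma}}{\beta m^{-1/\gamma}}=O(m^{-1/\gamma})\leq O(m^{-1}).$$
Summing the $m(m-1)$ terms and dividing by $m^2$ gives $|\oE_m(\mathbf p)-\oE_m(\mathbf F_m)|=O(m^{-1})$, with a constant depending on $\beta$, hence on $\norm{\phi}_{\Cc^\gamma}$. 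This crude per-pair bound already suffices; no finer use of the distribution of the points is needed. It is the main step, but it is straightforward once the separation is available.

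\textbf{The theta factor.} Consider
$$\mathbf Z_{\mathbf p}:=\oA_n(L^{m+g-1})-\sum_j\oA_1(p_j)-\mathbf z_\star\quad\text{and}\quad \mathbf Z_{\mathbf F_m}.$$
Since $\oA_1$ is Lipschitz (smooth on compact $X$), we have $\dist(\mathbf Z_{\mathbf p},\mathbf Z_{\mathbf F_m})\leq \sum_j C m^{-2/\gamma}\leq Cm^{1-2/\gamma}\leq Cm^{-1}$, using $\gamma\leq1$. The function $\|\widetilde\theta\|^2_{\fh_J}$ is smooth on the compact torus $\mathrm{Jac}(X)$, hence Lipschitz with a constant independent of $m$. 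Therefore
$$\oR_m(\mathbf p)\geq \oR_m(\mathbf F_m)-Cm^{-1}\geq\kappa-Cm^{-1}\geq \kappa/2$$
for $m$ large, using Lemma \ref{lem-new-fekete}.

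The one real subtlety is that this last inequality only holds for $m\geq m_0$. For the finitely many $m<m_0$, I would instead shrink the constant $\kappa$ in Lemma \ref{lem-new-fekete}, or equivalently treat those $m$ by a compactness argument, so that the statement holds for all $m$.
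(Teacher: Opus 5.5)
Your argument is the paper's own proof. You bound the $\phi$-term by H\"older continuity. You split $G=\log\dist+\varrho$ pair by pair and use the separation \eqref{sep-FM} to get $O(m^{-1/\gamma})\le O(m^{-1})$ per pair. For the theta factor you use $\dist(\oA_m(\mathbf p),\oA_m(\mathbf F_m))=O(m^{1-2/\gamma})$ together with continuity of $\|\widetilde\theta\|_{\fh_J}^2$. Your version is slightly more careful than the paper's: you bound the logarithm in both directions (the paper only writes the upper one), and the Lipschitz constant of the theta norm gives an explicit threshold. For $m\ge m_0$ this is complete.

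The one thing to drop is your patch for small $m$. There the radius $m^{-2/\gamma}$ need not be small compared with $\beta m^{-1/\gamma}$, nor compared with the distance from $\mathbf F_m$ to the zero set of $\oR_m$. So $\Omega_m$ may contain configurations with $p_j=p_k$, where $\oJ_{m,\phi}(\mathbf p)=+\infty$, and $\overline{\Omega_m}$ may meet $\{\oR_m=0\}$. In that case ``absorbing into the constant'' fails, and so does shrinking $\kappa$ or arguing by compactness, since compactness gives a positive minimum only if $\oR_m$ does not vanish on $\overline{\Omega_m}$. The lemma should be read as holding for $m$ large. That is exactly what the paper's proof gives (``this tends to $0$ as $m\to\infty$''), and it is all that is used later, where finitely many $n$ are absorbed into the $O(\log n/n)$ constants.
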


\begin{proof}
Since $\phi$ is $\gamma$-H\"older, 
$$  \Big|\int_X \phi \,\dd \delta_{\mathbf p} -\int_X \phi \,\dd\delta_{\mathbf F_m}\Big| \leq {1\over m}\sum_{1\leq j\leq m} \Big| \int_X \phi \,\dd \delta_{p_j} -\int_X \phi \,\dd\delta_{F_{m,j}}\Big|\leq \norm{\phi}_{\Cc^\gamma} (m^{-2/\gamma})^\gamma. $$
For the first assertion, it remains to show $ \big| \oE_m(\mathbf p)-\oE_m(\mathbf F_m) \big|=O(m^{-1}\log m)$. By definition of $\oE_m$, we only need to
 prove that for every $j\neq k$,
$$  \big| G(p_j,p_k)- G(F_{m,j},F_{m,k}) \big|  =O(m^{-1}).  $$
We use Lemma \ref{l:Green} to bounded the left hand side by  
$$  \big| \log\dist(p_j,p_k)-\log\dist(F_{m,j},F_{m,k})\big|+\big|   \varrho(p_j,p_k)-\varrho (F_{m,j},F_{m,k})      \big|.$$
Since $\varrho$ is Lipschitz, the second term is $O(m^{-2/\gamma})$. Using that $\dist(p_j,p_k)\leq \dist(F_{m,j},F_{m,k})+2m^{-2/\gamma}$, we see that the first term is bounded by
$$  \log {  \dist(F_{m,j},F_{m,k})+2m^{-2/\gamma}\over   \dist(F_{m,j},F_{m,k})} \leq \log { \beta m^{-1/\gamma}+2m^{-2/\gamma}\over   \beta m^{-1/\gamma}} =O(m^{-1/\gamma}).$$
This gives the first assertion. For the second one, by definition of $\oA_m$ and using that the distance on $\mathrm{Jac}(X)$ is flat locally, we have 
\begin{align*}
&\dist\Big(\oA_m(\mathbf p),\oA_m(\mathbf F_m) \Big)=\dist \Big(\sum_{1\leq j\leq m}\oA_1(p_j) ,\sum_{1\leq j\leq m}\oA_1(F_{m,j}) \Big) \\
&\leq \sum_{1\leq j\leq m}\dist \big( \oA_1(p_j), \oA_1(F_{m,j})  \big) \leq  \sum_{1\leq j\leq m}\norm{\oA_1}_{\Cc^1}\dist(p_j,F_{m,j})\\
&\leq m  \norm{\oA_1}_{\Cc^1}m^{-2/\gamma}=O(m^{-2/\gamma +1}).
\end{align*} 
This tends to $0$ as  $m\to\infty$. The continuity of $\widetilde \theta$ gives the second assertion.
\end{proof}

\section{Estimates on $\mathcal Z_n$}

The constant $\mathcal Z_n$ appears in the decomposition of $\big|\det S_n\big|_h^2$ in  Lemma \ref{boson}.
In this section, we will prove

\begin{proposition}\label{prop-Zn}
	There exists a constant $C>0$ independent of $\phi$ and $m$, but depending on $\norm{\phi}_{\Cc^\gamma}$, such that for all $m>1$,
	$$-C N_n\log N_n\leq \log \mathcal Z_n \leq C N_n \log N_n.$$
\end{proposition}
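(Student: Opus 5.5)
We will extract $\log\mathcal Z_n$ from the $L^2$ identity of Lemma \ref{l2formula} applied to the reference weighted set $(X,\mathbf 0,\omega)$. Since $S_n$ is $L^2(\omega,n\mathbf 0)$-orthonormal, Lemma \ref{l2formula} gives $\norm{\det S_n}^2_{L^2(\omega,n\mathbf 0)}=N_n!$. Inserting Lemma \ref{boson} yields
$$ N_n! = \mathcal Z_n \int_{X^{N_n}} e^{\sum_\ell\lambda(x_\ell)}\, e^{\sum_{j\neq k}G(x_j,x_k)}\,\oR_{N_n}(x_1,\dots,x_{N_n})\,\dd\omega^{N_n}. $$
Since $\log N_n!=O(N_n\log N_n)$, it suffices to show that the logarithm of the integral, call it $I_n$, satisfies $|\log I_n|\leq CN_n\log N_n$. (The constant does not actually depend on $\phi$; here $\phi=\mathbf 0$.) We use $m=N_n$, so $n=m+g-1$ and $\oR_m$ matches the theta factor.

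\emph{Upper bound on $I_n$, giving the lower bound on $\log\mathcal Z_n$.} The function $\lambda$ is smooth, so $e^{\sum\lambda}\leq e^{CN_n}$. The theta section is continuous on the compact torus, so $\oR_m\leq C$. For the Green term, Proposition \ref{prop-upper-functional} with $K=X$, $\phi=\mathbf 0$ and \eqref{minI0} give $-\oE_m(\mathbf x)\geq -C\log m/m$ for every $\mathbf x$. Hence $\sum_{j\neq k}G=m^2\oE_m\leq Cm\log m$. Because $\omega$ is a probability measure, $\log I_n\leq CN_n\log N_n$.

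\emph{Lower bound on $I_n$, giving the upper bound on $\log\mathcal Z_n$.} This is the main step. We use the configuration $\mathbf F_m$ from Section \ref{sec-rie}, built with $K=X$, $\phi=\mathbf 0$ (any $\gamma$, say $\gamma=1$). On the product neighborhood $P:=\prod_j\B(F_{m,j},m^{-2/\gamma})$, Lemma \ref{lem-omega} gives $\oR_m\geq\kappa/2$ and $|\oJ_{m,\mathbf 0}(\mathbf p)-\oJ_{m,\mathbf 0}(\mathbf F_m)|\leq C/m$. Propositions \ref{prop-Fmfm} and \ref{prop-lower} together with \eqref{minI0} give $\oJ_{m,\mathbf 0}(\mathbf F_m)\leq C\log m/m$. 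Therefore on $P$ we have $\sum_{j\neq k}G=-m^2\oJ_{m,\mathbf 0}\geq -Cm\log m$ and $e^{\sum\lambda}\geq e^{-Cm}$. The remaining point is that $P$ is not too small in measure: $\omega^{m}(P)\geq (c\,m^{-4/\gamma})^m=e^{-Cm\log m}$, since a ball of radius $r$ has $\omega$-area $\geq cr^2$. Multiplying these bounds gives $\log I_n\geq -CN_n\log N_n$.

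Combining the two bounds on $\log I_n$ with $\log N_n!=O(N_n\log N_n)$ proves the proposition. The delicate ingredient is the lower bound, which needs a single configuration where the energy is near optimal and the theta factor stays away from zero, uniformly on a ball of radius only polynomially small. This is exactly what Lemmas \ref{lem-new-fekete} and \ref{lem-omega} supply; the polynomial radius $m^{-2/\gamma}$ costs only $O(m\log m)$ in the logarithm, which matches the target accuracy.
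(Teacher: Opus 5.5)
Your proposal is correct and follows the paper's proof essentially step for step. The lower bound on $\log\mathcal Z_n$ comes from inserting the uniform bound $\oE_{N_n}\le C N_n^{-1}\log N_n$ (Proposition \ref{prop-upper-functional} with $K=X$, $\phi=\mathbf 0$) into $\norm{\det S_n}^2_{L^2(\omega,n\mathbf 0)}=N_n!$, and the upper bound from restricting that integral to the product of balls of radius $N_n^{-2/\gamma}$ around $\mathbf F_{N_n}$ via Lemma \ref{lem-omega} and Propositions \ref{prop-lower} and \ref{prop-Fmfm}; your side remark that the constant does not actually depend on $\phi$ is also correct.
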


The proof will rely on the lower bound of $\oJ_{m,\phi}$ we obtained in last three sections.
Before proving the proposition, we simplify $\norm{\det S_n}_{L^2(\mu,n\phi)}^2$ first.

\begin{lemma}\label{lem-simp}
	As $n\to \infty$, we have
	$$\norm{\det S_n}_{L^2(\mu,n\phi)}^2=  \mathcal Z_n  \int_{K^{N_n}}   \exp \Big[-N_n^2 \oJ_{N_n,\phi}(\mathbf x)+  O(N_n)   \Big] \oR_{N_n}(\mathbf x)\,\dd  \mu^{N_n}(\mathbf x),$$
	where the implicit factor of $O(N_n)$ depends on $\norm{\phi}_{\Cc^\gamma}$.
\end{lemma}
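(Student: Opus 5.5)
The plan is to start from the integral representation of $\norm{\det S_n}_{L^2(\mu,n\phi)}^2$ obtained right after Lemma \ref{boson}. By definition of the $L^2(\mu,n\phi)$-norm on $(L^n)^{\boxtimes N_n}$ and since $\supp(\mu)\subset K$, it equals
$$\int_{K^{N_n}} \mathcal Z_n\, e^{\sum_{\ell}\lambda(x_\ell)}\, e^{\sum_{j\neq k}G(x_j,x_k)-2n\sum_j\phi(x_j)}\,\oR_{N_n}(\mathbf x)\,\dd\mu^{N_n}(\mathbf x).$$
Here I use that $N_n=n-g+1$, so that $\oA_n(L^n)$ is exactly the point entering the definition of $\oR_{N_n}$ with $m=N_n$, namely $L^{m+g-1}=L^n$. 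The integrand therefore splits into three factors: the constant $\mathcal Z_n$, the theta factor $\oR_{N_n}(\mathbf x)$ (which is left untouched), and the remaining exponential factor. Only the last one needs to be rewritten.

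For the exponential factor, I would first treat $e^{\sum_\ell\lambda(x_\ell)}$. Since $\lambda$ is smooth on the compact surface $X$ and independent of $n$, we have $|\sum_{\ell\le N_n}\lambda(x_\ell)|\le N_n\norm{\lambda}_\infty=O(N_n)$, uniformly in $\mathbf x$.

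Next, by \eqref{e-e}, the Green/weight part equals $\exp[N_n^2\oE_{N_n}(\mathbf x)-2nN_n\int\phi\,\dd\delta_{\mathbf x}]$. Writing $n=N_n+g-1$, this becomes $-N_n^2\oJ_{N_n,\phi}(\mathbf x)-2(g-1)N_n\int\phi\,\dd\delta_{\mathbf x}$. The last term is bounded by $2|g-1|N_n\norm{\phi}_\infty$, and $\norm{\phi}_\infty$ is controlled by $\norm{\phi}_{\Cc^\gamma}$; this is exactly the observation displayed after \eqref{e-e}. Collecting the two $O(N_n)$ errors, both uniform in $\mathbf x\in K^{N_n}$, inside the exponential gives the claimed formula.

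Finally, there is the point that the $O(N_n)$ is a function of $\mathbf x$, bounded uniformly. The identity should be read as saying that the integrand equals $\mathcal Z_n\exp[-N_n^2\oJ_{N_n,\phi}(\mathbf x)+E_n(\mathbf x)]\oR_{N_n}(\mathbf x)$ with $\sup_{\mathbf x}|E_n|\le CN_n$, and $C$ depending only on $\norm{\lambda}_\infty$, $g$ and $\norm{\phi}_{\Cc^\gamma}$. On the diagonal, where $\oE=-\infty$, both sides vanish, so nothing breaks there. There is no real obstacle in this argument; the only care needed is to keep track of the shift $n$ versus $N_n$ and to make the uniformity of the error explicit.
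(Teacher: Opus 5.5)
Your proposal is correct and is essentially the paper's own argument: insert the bosonization formula of Lemma \ref{boson} into the $L^2(\mu,n\phi)$ integral over $K^{N_n}$, use \eqref{e-e} and $n=N_n+g-1$ to rewrite the Green/weight factor as $-N_n^2\oJ_{N_n,\phi}(\mathbf x)-2(g-1)N_n\int_X\phi\,\dd\delta_{\mathbf x}$, and absorb that last term together with $\sum_\ell\lambda(x_\ell)$ into an error bounded by $CN_n$ uniformly in $\mathbf x$. Your remarks on the uniformity of the error, on $\oR_{N_n}$ matching $\oA_n(L^n)$, and on the diagonal only spell out points the paper leaves implicit.
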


\begin{proof}
By definition and Lemma  \ref{boson}, 
\begin{align*}
\norm{\det S_n}_{L^2(\mu,n\phi)}^2 =\int_{K^{N_n}} &\mathcal Z_n e^{\sum_{1\leq \ell\leq N_n}\lambda(x_\ell)}  e^{\sum_{j\neq k} G(x_j,x_k)-2n\sum_{1\leq j\leq N_n} \phi(x_j)}  \\
& \Big\|\widetilde\theta\big(  \oA_{n}(L^{n})-\sum_{1\leq j\leq N_n}\oA_1(x_j) -\mathbf z_\star \big)\Big\|_{\fh_J}^2 \,  \dd \mu^{N_n}(x_1,\dots,x_{N_n}).   
\end{align*}
We rewrite it using \eqref{e-e} and our new defining functionals as
$$  \int_{K^{N_n}} \mathcal Z_n e^{\sum_{1\leq \ell\leq N_n}\lambda(x_\ell)}  \exp \Big[ N_n^2  \oE_{N_n}(\mathbf x) - 2nN_n\int_X \phi \,\dd \delta_{\mathbf x}      \Big] \oR_{N_n}(\mathbf x)\,\dd  \mu^{N_n}(\mathbf x) .   $$
Since $\phi$ is bounded independent of $n$ and $N_n=n-g+1$,
$$N_n^2  \oE_{N_n}(\mathbf x) - 2nN_n\int_X \phi \,\dd \delta_{\mathbf x} = -N_n^2 \oJ_{N_n,\phi}(\mathbf x)+  O(N_n)\norm{\phi}_{\Cc^\gamma}.    $$
Moreover, 
$$ \Big|\sum_{1\leq \ell\leq N_n}\lambda(x_\ell)\Big|\leq N_n\big|\max\lambda\big|=O(N_n)$$
because $\lambda$ is bounded independent of $n$. It follows that 
$$
\norm{\det S_n}_{L^2(\mu,n\phi)}^2=  \mathcal Z_n  \int_{K^{N_n}}   \exp \Big[-N_n^2 \oJ_{N_n,\phi}(\mathbf x)+  O(N_n)   \Big] \oR_{N_n}(\mathbf x)\,\dd  \mu^{N_n}(\mathbf x),
$$
proving the lemma.
\end{proof}

\begin{proof}[Proof of Proposition \ref{prop-Zn}-lower bound]
	Recall that $S_n$ is an $L^2(\omega ,n\mathbf{0})$-orthonormal basis of $H^0(X,L^n)$.
	By Lemma \ref{l2formula},
	$$ \norm{\det S_n}_{L^2(\omega,n\mathbf{0})}^2 =N_n ! \det \big( \lp  s_j,s_k \rp_{L^2(\omega,n\mathbf{0})}  \big)_{j,k}=N_n!.  $$
	Applying Lemma \ref{lem-simp} with $K=X,\phi=\mathbf 0,\mu=\omega$, we get
	\begin{equation}\label{Zn0}
	\int_{X^{N_n}}   \mathcal Z_n \, \exp \Big[-N_n^2 \oJ_{N_n,\phi}(\mathbf x)+  O(N_n)   \Big] \,\oR_{N_n}(\mathbf x) \,\omega^{N_n}(\mathbf x)=N_n!.
	\end{equation} 
	
	On the other hand,
applying Proposition \ref{prop-upper-functional} with $m=N_n,K=X$ and $\phi=\mathbf{0}$, we obtain for all $x\in X^{N_n}$,
$$ - \oJ_{N_n,\mathbf{0}}(\mathbf x)  \leq  -\min_{\cM(X)}\oI_{\mathbf{0}} +O(N_n^{-1}\log N_n).$$
Recall that $\min_{\cM(X)}\oI_{\mathbf{0}}=0$ from \eqref{minI0}. Using that  $\oR_{N_n}$ is continuous, we deduce from \eqref{Zn0} that 
$$ N_n!\leq \int_{X^{N_n}}  \mathcal Z_n  e^{N_n^2O(N_n^{-1}\log N_n)}  \max \oR_{N_n} \,\omega^{N_n}(\mathbf x)= \mathcal Z_n \, e^{O(N_n\log N_n)} \, \max \oR_{N_n} .  $$
This gives 
$$\log \mathcal Z_n \geq \log N_n! -O(N_n\log N_n)-\log \max \oR_{N_n} .$$
The lower bound follows.
\end{proof}

\begin{proof}[Proof of Proposition \ref{prop-Zn}-upper bound]
We start with the configuration  $\mathbf F_m$ obtained in Section \ref{sec-rie}. Let 
\begin{equation}\label{defn-omega}
\Omega_m:= \prod_{1\leq j\leq m} \B(F_{m,j}, m^{-2/\gamma}) \subset X^m. 
\end{equation}
 For every  $\mathbf p\in \Omega_m$, by Lemma \ref{lem-omega}, we have
 $$  \big|\oJ_{m,\phi}(\mathbf p) -\oJ_{m,\phi}(\mathbf F_m) \big|= O( m^{-1})  \quad\text{and}\quad \oR(\mathbf p)\geq \kappa/2.     $$
Together with Propositions \ref{prop-lower} and \ref{prop-Fmfm}, we obtain
\begin{equation}\label{omegam}
\oJ_{m,\phi}(\mathbf p) \leq  \min_{\cM(K)}\oI_\phi + O(m^{-1}\log m) \quad\text{and}\quad \oR(\mathbf p)\geq \kappa/2  \quad\text{for }\, \mathbf p\in \Omega_m.
\end{equation}
In particular, this holds for $m=N_n,K=X$ and $\phi=\mathbf{0}$, which says (recall $\min_{\cM(X)} \oI_{\mathbf 0}=0$)
$$\oJ_{N_n,\mathbf{0}}(\mathbf p) \leq O(N_n^{-1}\log N_n) \quad\text{and}\quad \oR(\mathbf p)\geq \kappa/2  \quad\text{for }\, \mathbf p\in \Omega_{N_n}. $$
  Shrinking the integral domain to $\Omega_{N_n}$ in \eqref{Zn0}, gives
$$\int_{\Omega_{N_n}}   \mathcal Z_n \, \exp \Big[-N_n^2 \oJ_{N_n,\phi}(\mathbf x)+  O(N_n)   \Big] \,\oR_{N_n}(\mathbf x) \,\omega^{N_n}(\mathbf x)\leq N_n!,  $$
which implies
$$\int_{\Omega_{N_n}}   \mathcal Z_n \,  e^{-N_n^2O(N_n^{-1}\log N_n)  } \,{\kappa\over 2} \,\omega^{N_n}\leq N_n!.    $$

Since $\omega$ is smooth and strictly positive, the $\omega$-area of an open ball of radius $r$ is at least $c_\omega r^2$ for some $c_\omega>0$ only depending on $\omega$. 
Using that $\Omega_{N_n}$ is the product of ${N_n}$ balls of radius ${N_n}^{-2/\gamma}$, we have
  $$\int_{\Omega_{N_n}}\,\omega^{N_n}\geq \big(c_\omega ({N_n}^{-2/\gamma})^2\big)^{N_n}= c_\omega^{N_n} N_n^{-4N_n/\gamma} .$$
Therefore, we conclude that 
$$  \log \mathcal Z_n \leq \log N_n! + O(N_n\log N_n)-\log(\kappa/2)-N_n\log c_\omega+4N_n/\gamma \log N_n.  $$
The upper bound follows.
\end{proof}

\section{Proof of the main theorems}

 We prove the theorems in the reverse order.

\begin{proof}[Proof of Theorem \ref{maintheorem3}]
	It is enough to assume $(K_2,\phi_2,\mu_2)=(X,\mathbf 0,\omega)$, which clearly satisfies the mass-density condition. 	Write $(K_1,\phi_1,\mu)$ as $(K,\phi,\mu)$.  We need to only show (recall $\min_{\cM(X)} \oI_{\mathbf 0}=0$)
	\begin{equation}\label{goal-theorem3}
	  \Big| \oL_n(\mu,\phi)-\oL_n(\omega ,\mathbf{0})     -\min_{\cM(K)} \oI_\phi \Big| = O\Big({\log n\over n}\Big)     
	\end{equation}
	if $\mu$ satisfies the mass-density condition on $K$.
	 The estimate in the theorem can be deduced by an easy triangular inequality. 
We need to investigate the difference $\oL_n(\mu,\phi)-\oL_n(\omega ,\mathbf{0})$. By \eqref{key-equation}, it is exactly
	 $$ {1\over n N_n}\log  \norm{\det S_n}_{L^2(\mu,n\phi)}^2 -{1\over n N_n} \log(N_n !).$$
	 
	 We now estimate the first term above.  
	 By Proposition \ref{prop-upper-functional}, for $\mathbf x\in K^{N_n}$,
	 $$ -N_n^2 \oJ_{N_n,\phi}(\mathbf x) \leq -N_n^2  \min_{\cM(K)} \oI_\phi +O(N_n\log N_n).$$
	 Therefore, taking the formula in Lemma \ref{lem-simp} into account, we have
	 $$ \norm{\det S_n}_{L^2(\mu,n\phi)}^2\leq  \mathcal Z_n    \exp \Big[-N_n^2  \min_{\cM(K)} \oI_\phi +O(N_n\log N_n)  \Big]\max\oR_{N_n}.   $$
	 Together with Proposition \ref{prop-Zn}, we obtain
	 \begin{align*}
	 {1\over n N_n}\log  \norm{\det S_n}_{L^2(\mu,n\phi)}^2  
	 &\leq  {1\over n N_n}\log \mathcal Z_n -{N_n^2\over nN_n}\min_{\cM(K)} \oI_\phi+{ O(N_n\log N_n)\over nN_n} + {\max\oR_{N_n}\over nN_n}\\
	 &\leq  { O(\log N_n)\over n}-{N_n\over n}\min_{\cM(K)} \oI_\phi.
	 \end{align*}
	 Recalling $N_n=n-g+1$, the first term is  $O(n^{-1}\log n)$. Our condition on $K$ ensure that $\min\oI_\phi$ is finite. So, 
	 \begin{equation}\label{Iphiterm}
	 -{N_n\over n}\min_{\cM(K)} \oI_\phi =-\min_{\cM(K)} \oI_\phi +{g-1\over n}\min_{\cM(K)} \oI_\phi = -\min_{\cM(K)} \oI_\phi+ O(n^{-1}).
	 \end{equation}
	 Thus, we can conclude that 
	 $$ {1\over n N_n}\log  \norm{\det S_n}_{L^2(\mu,n\phi)}^2 \leq  -\min_{\cM(K)} \oI_\phi +O\Big({\log n\over n} \Big).$$
	 This proves the upper bound in \eqref{goal-theorem3}. Notice in the proof so far, we do not need the mass-density condition.
	 
	 \smallskip
	 
	 For the lower bound in \eqref{goal-theorem3}, as in \eqref{defn-omega}, we define $$ \Omega_{N_n}:= \prod_{1\leq j\leq N_n} \B(F_{ N_n,j},  N_n^{-2/\gamma}) \subset X^{N_n}.    $$
	  It may not be contained in $K^{N_n}$. So we consider $\Omega_{N_n}\cap K^{N_n}$ and obtain from Lemma \ref{lem-simp} that
	  $$\norm{\det S_n}_{L^2(\mu,n\phi)}^2\geq  \mathcal Z_n  \int_{\Omega_{N_n}\cap K^{N_n}}  \exp \Big[-N_n^2 \oJ_{N_n,\phi}(\mathbf x) + O(N_n)   \Big] \oR_{N_n}(\mathbf x)\,\dd  \mu^{N_n}(\mathbf x) .$$
	 Using \eqref{omegam}, we see that 
	 $$\oJ_{N_n,\phi}(\mathbf x) \leq  \min_{\cM(K)}\oI_\phi + O(N_n^{-1}\log N_n)  \quad\text{and}\quad \oR(\mathbf x)\geq \kappa/2\quad\text{for }\, \mathbf x\in \Omega_{N_n}.$$
	 Hence
	  $$\norm{\det S_n}_{L^2(\mu,n\phi)}^2\geq  \mathcal Z_n \exp \Big[-N_n^2 \min_{\cM(K)}\oI_\phi - O(N_n\log N_n) \Big] {\kappa\over 2} \int_{\Omega_{N_n}\cap K^{N_n}}  \,\dd  \mu^{N_n}(\mathbf x) .  $$
	  We need to bound the last integral from below. Recall that the boundary of $K$ is $\Cc^2$. So, for $N_n$  large enough, the intersection $\B(F_{ N_n,j},  N_n^{-2/\gamma}) \cap K$ should contains a ball of radius $2^{-1}N^{-2/\gamma}$. The mass-density condition of $\mu$ on $K$ gives
	  $$ \mu \big( \B(F_{ N_n,j},  N_n^{-2/\gamma}) \cap K \big) \geq c (2^{-1}N_n^{-2/\gamma})^\tau   $$
	  for some $c>0$ and $\tau>0$.
	 It follows that
	  $$ \int_{\Omega_{N_n}\cap K^{N_n}}  \,\dd  \mu^{N_n}(\mathbf x) \geq  c^{N_n}  (2^{-1}N_n^{-2/\gamma})^{\tau N_n}. $$
	  Gluing all the inequality above and using Proposition \ref{prop-Zn}, we conclude that 
	  \begin{align*}  
	  &{1\over n N_n}\log  \norm{\det S_n}_{L^2(\mu,n\phi)}^2\\ 
	  &\geq  {1\over n N_n}\log\mathcal Z_n- {N_n^2 \over nN_n }\min_{\cM(K)}\oI_\phi - {O(N_n\log N_n) \over nN_n}-{ \log (\kappa/2) \over nN_n} +{N_n\log(c (2^{-1}N_n^{-2/\gamma})^\tau ) \over nN_n} \\
	  &\geq -{O(\log N_n)\over n }- {N_n  \over n  }\min_{\cM(K)}\oI_\phi\geq  -\min_{\cM(K)} \oI_\phi -O\Big({\log n\over n} \Big). 
	  \end{align*}
	  where we use \eqref{Iphiterm}.
	  This proves the lower bound in \eqref{goal-theorem3} and finishes the proof of Theorem \ref{maintheorem3}.
\end{proof}

\begin{proof}[Proof of Theorem \ref{maintheorem1}]
	Again, we assume  $(K_2,\phi_2,\mu_2)=(X,\mathbf 0,\omega)$ and write $(K_1,\phi_1,\mu_1)=(K,\phi,\mu)$. Lemma \ref{lem-K-X} allows us to replace $\oL_n(K,\phi)$ by $\oL_n(X,U_{K,\phi})$.
	We only need to prove
	\begin{equation}\label{goal-theorem1}
	  \Big|\oL_n(X,U_{K,\phi})-\oL_n(X,\mathbf 0) -  \min_{\cM(K)}\oI_{\phi}    \Big|= O\Big({\log n\over n}  \Big).  
	  \end{equation}
	
	Note that the statement of the theorem is independent of $\mu$. We can just let $\mu=\omega$.  Recall $U_{K,\phi}\in\Cc^\gamma(X)$ from Lemma \ref{lem-UK}.  So $(X,U_{K,\phi},\omega)$ satisfies the strongly Bernstein-Markov condition by Lemma \ref{lem-strongBM}.  It follows from Lemma \ref{lem-dmn} that
	 $$ 0\leq \oL_n(\omega,U_{K,\phi}) -\oL_n(X,U_{K,\phi})\leq O(n^{-1}\log n).  $$
	 By the same reason, we also have
	 $$ 0\leq \oL_n(\omega,\mathbf 0) -\oL_n(X,\mathbf 0)\leq O(n^{-1}\log n).  $$
	 
	 On the other hand, replacing $(K,\mu,\phi)$ by $(X,\omega,U_{K,\phi})$ in \eqref{goal-theorem3}, gives
	 $$\Big| \oL_n(\omega,U_{K,\phi})-\oL_n(\omega ,\mathbf{0})     -\min_{\cM(X)} \oI_\phi \Big| = O\Big({\log n\over n}\Big)$$
	 Gluing all last three inequalities together, we obtain \eqref{goal-theorem1} and finish the proof of Theorem \ref{maintheorem1}.
\end{proof}

\begin{proof}[Proof of Theorem \ref{maintheorem2}]
 Since $(K_1,\phi_1,\mu_1),(K_2,\phi_2,\mu_2)$  satisfy the strong Bernstein-Markov condition, Lemma \ref{lem-strongBM} gives
 	$$ 0\leq \oL_n(\mu_1,\phi_1) -\oL_n(K_1,\phi_1)\leq O( n^{-1}\log n)  $$
and  	$$ 0\leq \oL_n(\mu_2,\phi_2) -\oL_n(K_2,\phi_2)\leq O( n^{-1}\log n).  $$
The desired assertion follows from Theorem \ref{maintheorem1}.
\end{proof}

\medskip
\noindent\textbf{Funding.}
This work was funded by National Key R\&D Program of China 2025YFA1018300 and Basic Research Program of Jiangsu BK20251226.

\medskip
%\bibliographystyle{plain}
%\bibliography{refs}

\end{document}